\def\sqr#1#2{{\vcenter{\vbox{\hrule height.#2pt
              \hbox{\vrule width.#2pt height#1pt \kern#1pt \vrule width.#2pt}
              \hrule height.#2pt}}}}
\def\5n{\negthinspace \negthinspace \negthinspace \negthinspace \negthinspace }
\def\4n{\negthinspace \negthinspace \negthinspace \negthinspace }
\def\3n{\negthinspace \negthinspace \negthinspace }
\def\2n{\negthinspace \negthinspace }
\def\1n{\negthinspace }
\def\dbE{\mathbb{E}}
\def\dbF{\mathbb{F}}
\def\dbH{\mathbb{H}}
\def\dbL{\mathbb{L}}
\def\dbP{\mathbb{P}}
\def\dbR{\mathbb{R}}
\def\dbT{\mathbb{T}}
\def\dbX{\mathbb{X}}
\def\cA{{\cal A}}
\def\cF{{\cal F}}
\def\cL{{\cal L}}
\def\cT{{\cal T}}
\def\cU{{\cal U}}
\def\cX{{\cal X}}
\def\cY{{\cal Y}}
\def\cZ{{\cal Z}}
\def\by{{\bf y}}
\def\ch{\textsc{h}}
\def\as{\hbox{\rm a.s.}}
\def\ds{\displaystyle}
\def\ns{\noalign{\ss}}
\def\no{\noindent}
\def\ss{\smallskip}
\def\ms{\medskip}
\def\q{\quad}
\def\qq{\qquad}
\def\hb{\hbox}
\def\({\Big (}
\def\){\Big )}
\def\[{\Big[}
\def\]{\Big]}
\def\lan{\langle}
\def\ran{\rangle}
\def\rf{\eqref}
\def\a{\alpha}
\def\b{\beta}
\def\d{\delta}
\def\e{\varepsilon}
\def\k{\kappa}
\def\l{\lambda}
\def\si{\sigma}
\def\t{\tau}
\def\f{\varphi}
\def\th{\theta}
\def\bx{\O}
\def\i{\infty}
\def\G{\Gamma}
\def\D{\Delta}
\def\L{\Lambda}
\def\F{\Phi}
\def\bx{\O}
\def\da{\mathop{\downarrow}}
\def\h{\widehat}
\def\wt{\widetilde}
\def\cd{\cdot}
\def\cds{\cdots}
\def\tr{\hbox{\rm tr$\,$}}
\def\les{\leqslant}
\def\ges{\geqslant}
\def\pa{\partial}
\def\bde{\begin{definition}\label}
\def\ede{\end{definition}}
\def\be{\begin{equation}}
\def\bel{\begin{equation}\label}
\def\ee{\end{equation}}
\def\bt{\begin{theorem}\label}
\def\et{\end{theorem}}
\def\bc{\begin{corollary}\label}
\def\ec{\end{corollary}}
\def\bl{\begin{lemma}\label}
\def\el{\end{lemma}}
\def\bp{\begin{proposition}\label}
\def\ep{\end{proposition}}
\def\bas{\begin{assumption}\label}
\def\eas{\end{assumption}}
\def\br{\begin{remark}\label}
\def\er{\end{remark}}
\def\bex{\begin{example}\label}
\def\ex{\end{example}}
\def\ba{\begin{array}}
\def\ea{\end{array}}
\def\ben{\begin{enumerate}}
\def\een{\end{enumerate}}
\def\square#1{\vbox{\hrule\hbox{\vrule height#1%
     \kern#1\vrule}\hrule}}
\def\rectangle#1#2{\vbox{\hrule\hbox{\vrule height#1%
     \kern#2\vrule}\hrule}}
\font\tenbb=msbm10 \font\sevenbb=msbm7 \font\fivebb=msbm5
\newcommand{\bea}{\begin{eqnarray}}
\newcommand{\eea}{\end{eqnarray}}
\newcommand{\beaa}{\begin{eqnarray*}}
\newcommand{\eeaa}{\end{eqnarray*}}
\def\bx{{\bf x}}
\def\b0{{\bf 0}}
\newtheorem{theorem}{\indent Theorem}[section]
\newtheorem{definition}[theorem]{\indent Definition}
\newtheorem{proposition}[theorem]{\indent Proposition}
\newtheorem{corollary}[theorem]{\indent Corollary}
\newtheorem{lemma}[theorem]{\indent Lemma}
\newtheorem{remark}[theorem]{\indent Remark}
\newtheorem{example}[theorem]{\indent Example}
\newtheorem{assumption}[theorem]{\indent Assumption}
\begin{document}
\title{\bf Path Dependent Feynman--Kac Formula for Forward Backward Stochastic Volterra Integral Equations}

\author{
Hanxiao Wang
\footnote{Department of Mathematics, National University of Singapore,
                           Singapore 119076, Singapore (Email: \texttt{ mathxw@nus.edu.sg}).
                           This author is supported by Singapore MOE AcRF Grants R-146-000-271-112.},~~
Jiongmin Yong
\footnote{Department of Mathematics, University of Central Florida, Orlando, FL 32816 USA (Email: \texttt{jiongmin.yong@ucf.edu}). This author is supported in part by NSF Grant  DMS-1812921. }~~
and~~Jianfeng Zhang
\footnote{Department of Mathematics, University of Southern California, Los Angeles,  CA 90089 USA (E-mail: \texttt{jianfenz@usc.edu}). This author is supported in part by NSF Grant DMS-1908665. }}

\maketitle

\begin{quote}

\footnotesize {\bf Abstract.}
This paper is concerned with the relationship between forward-backward stochastic Volterra integral equations (FBSVIEs, for short) and a system of (non-local in time) path dependent partial differential equations (PPDEs, for short). Due to the nature of Volterra type equations, the usual flow property (or semigroup property) does not hold.
Inspired by Viens--Zhang \cite{Viens-Zhang-2019} and Wang--Yong \cite{Wang-Yong-2019}, auxiliary processes are introduced so that the flow property of adapted solutions to the FBSVIEs is recovered in a suitable sense, and thus the functional It\^o formula is applicable. Having achieved this stage, a natural PPDE is found so that the adapted solution of the backward SVIEs admits a representation in terms of the solution to the forward SVIE via the solution to a PPDE.
On the other hand, the solution of the PPDE admits a representation in terms of adapted solution to the (path dependent) FBSVIE, which is referred to as a Feynman--Kac formula. This leads to the existence and uniqueness of a classical solution to the PPDE, under smoothness conditions on the coefficients of the FBSVIEs. Further, when the smoothness conditions are relaxed with the backward component of FBSVIE being one-dimensional, a new (and suitable) notion of viscosity solution is introduced for the PPDE, for which a comparison principle of the viscosity solutions is established, leading to the uniqueness of the viscosity solution. Finally, some results have been extended to coupled FBSVIEs and type-II BSVIEs, and a representation formula for the path derivatives of  PPDE solution is obtained by a closer investigation of linear FBSVIEs.
\ms

{\bf Keywords.} Forward-backward stochastic Volterra integral equation,
path dependent partial differential equation, Feynman--Kac formula,
viscosity solution, comparison principle.

\ms

{\bf AMS Subject Classifications.}  60H20,  45D05, 35K10, 35D40,  60G22.
\end{quote}

%\normalsize

\section{Introduction}
\label{sect-Introduction}
\setcounter{equation}{0}
Let $(\Omega,\cF,\dbF,\dbP)$ be a complete filtered probability space, $W$ a  $d$-dimensional standard Brownian motion, $\dbF\equiv\{\cF_t\}_{t\ges0}$ the natural filtration generated by $W$ augmented by all the $\dbP$-null sets in $\cF$, and $T>0$ a fixed time horizon. Consider the following (decoupled) {\it forward-backward stochastic differential equation} (FBSDE, for short): given initial data $(t,x) \in [0, T]\times \dbR^n$,
\bel{FBSDE1}\left\{\ba{ll}
\ds X^{t,x}_s= x + \int_t^s b(r,X^{t,x}_r)dr+\int_t^s \si(r,X^{t,x}_r)dW_r,\\
\ds Y^{t,x}_s= g(X^{t,x}_T) +\int_s^T f(r,X^{t,x}_r,Y^{t,x}_r,Z^{t,x}_r)dr - \int_t^T Z^{t,x}_rdW_r,\ea\right. s\in[t,T],\ee
where the coefficients $b,\si,f,g$ are deterministic functions. Such an FBSDE is associated with the following terminal value problem  of a {\it partial differential equation} (PDE, for short):
\bel{PDE1}\left\{\ba{ll}
\ds\pa_t u(t,x)+{1\over2}\tr\big[\pa_{xx}^2u(t,x)\si(t,x)\si^\top(t,x)\big]+\pa_xu(t,x) b(t,x)\\
\ns\ds\qq+f(t,x,u(t,x),\pa_xu(t,x)\si(t,x)\big)=0,\qq(t,x)\in[0,T]\times\dbR^n,\\
\ns\ds u(T,x)=g(x),\qq x\in\dbR^n. \ea\right.\ee
By the seminal works Peng \cite{Peng-1991} and Pardoux--Peng \cite{Pardoux-Peng-1992},  we have the  nonlinear {\it Feynman--Kac formula}, representing the viscosity solution to PDE (\ref{PDE1}) by the adapted solution to FBSDE (\ref{FBSDE1}):
\bel{U=Y}
u(t,x)=Y^{t,x}_t,\qq(t,x)\in[0,T]\times\dbR^n,
\ee
and on the other hand the adapted solution $(Y^{t,x},Z^{t,x})$ to the {\it backward stochastic differential equation} (BSDE, for short), namely the second equation in \rf{FBSDE1}, has the following {\it representation formula} via the solution to  PDE \rf{PDE1}:
\bel{Y=U}
Y^{t,x}_s=u(s,X_s^{t,x}),\qq Z^{t,x}_s=\pa_xu(s,X^{t,x}_s)\si(s,X^{t,x}_s),\qq s\in[t,T],
\ee
provided  $u$ is smooth.  The key for this PDE approach is the {\it flow property}, also called {\it semigroup property} and can be viewed as a type of {\it time consistency}, of the FBSDE. That is,
\bel{semigroup1}
X^{t,x}_r = X^{s, X^{t,x}_s}_r,\q
Y^{t,x}_s = Y^{r, u(r,\cd); t,x}_s,\q  Z^{t,x}_s = Z^{r, u(r,\cd); t,x}_s,\qq t\les s \les r\les T,
\ee
where $(Y^{r, u(r,\cd); t,x}, Z^{r, u(r,\cd); t,x})$ is the solution to the BSDE on $[t, r]$ with terminal condition $Y_r = u(r, X^{t,x}_r)$. We  remark that this approach remains effective for coupled FBSDEs (namely $b, \si$ may depend on $(Y, Z)$), see Ma--Protter--Yong \cite{Ma-Protter-Yong-1994}, and even for more general situations, where $u$ plays the role of the {\it decoupling field} for the forward-backward equations.

In this paper, our objective is to consider the following decoupled {\it forward-backward stochastic Volterra integral equation} (FBSVIE, for short) with solution triple $(X_t, Y_t, Z^t_r)$, $0\les t\les r\les T$:
\bel{FBSVIE1}
\left.\ba{ll}
\ds X_t=\bx_t+\int_0^tb(t,r,X_r)dr+\int_0^t\si(t,r,X_r)dW_r,\\
\ds Y_t=g(t,X_T)+\int_t^Tf(t,r,X_r,Y_r,Z_r^t)dr-\int_t^TZ_r^tdW_r,\ea\right.\q t\in[0,T].
\ee
Here the coefficients $b, \si, f$ involve two time variables; the initial condition is a continuous path $\bx\in C([0, T]; \dbR^n)$; and the terminal condition $g$ depends on $t$ as well. A special case of the forward SVIE is the fractional Brownian motion, where $\bx =0$, $b = 0$, $\si = K(t,r)$ for some deterministic kernel $K$. FSVIE has received very strong attention in recent years due to its applications in {\it rough volatility} models, see, e.g., Comte--Renault \cite{CR}, Gatheral--Jaisson--Rosenbaum \cite{GJR-2018}, El Euch--Rosenbaum \cite{Euch, ER-2018}, and Viens--Zhang \cite{Viens-Zhang-2019}. On the other hand, BSVIE has become a popular tool for studying many problems in mathematical finance. For examples, Di Persio \cite{Di Persio-2014} on stochastic differential utility, Yong \cite{Yong 2007}, Wang--Yong \cite{Wang-Yong-2015} and Agram \cite{Agram 2018} on dynamic risk measures, Kromer--Overbeck \cite{Kromer-Overbeck 2017} on dynamic capital allocations, Wang--Sun--Yong \cite{Wang-Sun-Yong-2019} on equilibrium recursive utility and equilibrium dynamic risk measures, to mention a few. More interestingly, in recent years, time-inconsistent problems have attracted many researchers' attention. Among others, the time-inconsistency could be caused by the time-preferences of the decision-makers, which can be described by non-exponential discounting. See the seminal paper by Strotz \cite{Strotz-1955}, and early follow-up works of Pollak \cite{Pollak-1968} and Laibson \cite{Laibson}. For the recent works of time-inconsistent problems relevant to the non-exponential discounting, we mention Karp \cite{Karp}, Ekeland--Lazrak \cite{Ekeland-Lazrak-2010}, Yong \cite{Yong 2012}, Wei--Yong--Yu \cite{Wei-Yong-Yu-2017}, and Hernandez--Possamai \cite{Hernandez-2020}. It is worthy of pointing out that the most suitable dynamic recursive cost functional allowing non-exponential discounting should be described by a BSVIE, as indicated in Wang--Yong \cite{Wang-Yong-2019b}. We remark that the BSVIE in (\ref{FBSVIE1}) is also called type-I BSVIE in the literature. A more general  type-II BSVIE, where $f$ depends not only on $Z_r^t$, but also on $Z^r_t$, appears naturally as an adjoint equation when one studies stochastic maximum principle for controlled FSVIE, see Yong \cite{Yong-2006, Yong-2008}.

Our goal of this paper is to extend the PDE approach to FBSVIEs. This on one hand will help us to understand the structure of FBSVIEs, and on the other hand is helpful for numerical computation of these equations. As mentioned, the PDE approach is based on the flow property of the equations. Unfortunately, due to the two time variable structure, neither FSVIE nor BSVIE satisfies the flow property in the standard sense: for $0\les t <s\les T$,
$$
 X_s\ne X_t+\!\! \int_t^s\! b(s,r,X_r)dr\!+\!\int_t^s\! \si(s,r,X_r)dW_r,\q \ds Y_t\ne Y_s+\!\int_t^s\! f(t,r,X_r,Y_r,Z_r^t)dr-\!\int_t^s\! Z_r^tdW_r.
$$
Our work is built on Viens--Zhang \cite{Viens-Zhang-2019}, Yong \cite{Yong-2016} and Wang--Yong \cite{Wang-Yong-2019}. By introducing  auxiliary two time variable processes $\tilde X^s_t$, $\tilde Y^t_s$, see (\ref{wtX}) and (\ref{wtYZ}) below, \cite{Viens-Zhang-2019} recovers the flow property of the FSVIE in certain sense, and \cite{Yong-2016, Wang-Yong-2019}  recover the flow property of the BSVIE. We remark that in \cite{Viens-Zhang-2019} the backward equation is a standard BSDE, while in \cite{Yong-2016, Wang-Yong-2019} the forward equation is a standard SDE.  Putting together allows us to adopt the PDE approach to FBSVIE (\ref{FBSVIE1}).  We note that the associated PDE will intrinsically depend on the paths of $\tilde X^{[t, T]}_t$, and thus it becomes a {\it path dependent PDE} (PPDE, for short). Then, with a little extra effort, we can actually handle path dependent FBSVIEs, namely $b, \si, f, g$ depend on the paths of $X$, as we will do in the paper. We shall emphasize though, even for the state dependent case (\ref{FBSVIE1}), our results in the paper are new.

To be precise, we shall introduce a two-time variable function $U(t,s, \bx);$ $0\les t\les s\les T$, $\bx\in C([0, T]; \dbR^n)$, which satisfies the PPDE with terminal condition $U(t, T, \bx) = g(t, \bx)$:
\bea
\label{PPDE1}
\left.\ba{c}
\ds\pa_s U(t,s,\bx) + {1\over 2} \big\lan \pa^2_{\bx\bx} U(t,s,\bx), \big(\si^{s,\bx}_{[s, T]}, \si^{s,\bx}_{[s, T]}\big)\big\ran
+\big\lan \pa_{\bx} U(t,s,\bx), b^{s,\bx}_{[s, T]}\big\ran\\
\ns\ds + f\big(t,s, \bx, U(s,s,\bx), \big\lan \pa_{\bx} U(t,s,\bx), \si^{s,\bx}_{[s, T]}\big\ran\big)=0.
\ea\right.
\eea
Here, $\pa_\bx U, \pa^2_{\bx\bx}U$ are the first  order and second order Fr\'echet derivatives with respect to the perturbation of $\bx_{[s, T]}$, and for $\f=b, \si$, $\f^{s,\bx}_{[s, T]}$ refers to the path $\{\f(r, s, \bx)\}_{r\in [s, T]}$. Then we have the following relationship which extends (\ref{Y=U}): denoting  $\hat X^t_r := X_r 1_{[0, t)}(r) +  \tilde X^r_t 1_{[t, T]}(r)$,
\bea
\label{Y=U2}
 Y_t = U\big(t, t,  \hat X^t \big),\q Z^t_s = \big\lan \pa_{\bx} U(t,s, \hat X^s),\, \si^{s, \hat X^s}_{[s, T]}\big\ran,\q\mbox{and}\q \tilde Y^t_s = U(t,s, \hat X^s),
\eea
and similarly we can extend (\ref{U=Y}) to this case, see (\ref{Uts}) below, and thus establish the Feyman--Kac formula for (\ref{PPDE1}). Besides the key flow property, a crucial tool in this analysis is the functional It\^o formula, initiated by Dupire \cite{Dupire-2019} in standard SDE setting and extended to the SVIE setting by \cite{Viens-Zhang-2019}. The PPDE (\ref{PPDE1}) has several important features:

$\bullet$  The state variable $\bx$ has a continuous path on $[0, T]$, and thus is infinite dimensional.

$\bullet$ $U$ depends on two time variables $(t,s)$. In particular, the equation at $(t,s, \bx)$ involves the value $U(s,s, \bx)$, and thus is non-local in the first time variable $t$.

$\bullet$ Alternatively, noting that (\ref{PPDE1}) does not involve derivatives with respect to the first time variable $t$, then we may view $t$ as a parameter instead of a variable. That is, we may view (\ref{PPDE1})  as a system of PPDEs with parameter $t$ and solution  $\{U(t,\cd,\cd)\}_{t\in [0, T]}$. Then this is an (uncountably) infinite dimensional system of PPDEs which are self interacted through the diagonal term $U(s,s, \bx)$.

We next prove the existence of classical solutions to PPDE (\ref{PPDE1}), provided the coefficients are smooth enough in an appropriate sense, and thus establish the above connection between PPDE (\ref{PPDE1}) and FBSVIE (\ref{FBSVIE1}) rigorously. We remark that Peng--Wang \cite{Peng-Wang-2016} obtained the classical solution in the form $u(t, \bx_{[0, t]})$ for a PPDE corresponding to PDE (\ref{PDE1}), associated with the path dependent version of the FBSDE (\ref{FBSDE1}). Our result generalizes  \cite{Peng-Wang-2016} in several aspects.  First, in \cite{Peng-Wang-2016} $u(t, \bx_{[0, t]})$ depends on the path only up to $t$, in particular the path derivative $\pa_\bx u$ there involves only the perturbation of $\bx_t$ and thus is actually a finite dimensional derivative, while our path derivative is indeed a Fr\'{e}chet derivative.  Next, $u$ is finite dimensional, while as mentioned (\ref{PPDE1}) can be viewed as an infinite dimensional system. Moreover, when restricted to the state dependent case, the PPDE in \cite{Peng-Wang-2016} reduces back to the standard PDE (\ref{PDE1}), but (\ref{PPDE1}) has the same features that both the state $\bx$ and the solution $U$ are infinite dimensional.  We also obtain a representation formula for the path derivative $\pa_\bx U(t,s, \bx)$, which is interesting in its own right and is  new in the literature.

The more challenging part is the viscosity solution theory for PPDE (\ref{PPDE1}), in the case that $Y$ is scalar but the coefficients are less smooth. Note that the state space $C([0, T]; \dbR^n)$ is not locally compact, so the standard viscosity solution theory of Crandall--Ishii--Lions \cite{GIL-1992} does not work here.  Moreover, we have some intrinsic adaptiveness requirement on the dependence of the path, which prevents us from applying the viscosity solution theory in infinite dimensional space, see e.g.  Crandall--Lions \cite{Crandall-Lions-1991}, Li--Yong \cite{Li-Yong-1995}, and Fabbri--Gozzi--Swiech \cite{FGS}. One exception in this direction is Ren--Rosestolato \cite{RR}, which however requires some stronger type of regularity and is overall still not satisfactory for our purpose. We shall follow the approach proposed by Ekren--Keller--Touzi--Zhang \cite{EKTZ}, where the pointwise optimization in \cite{GIL-1992} is replaced with an optimal stopping problem under certain nonlinear expectation, and thus the comparison principle can be obtained without requiring the local compactness of the state space. Our PPDE (\ref{PPDE1}) has two major differences from \cite{EKTZ}. First, the nonlinear expectation used in \cite{EKTZ} relies on a family of semi-martinagle measures, while our state process $X$ is not a semi-martingale. Second, the PPDE in \cite{EKTZ} is one dimensional and the comparison principle for classical solutions (if they exist) is quite straightforward, but as mentioned PPDE (\ref{PPDE1}) is  non-local (or viewed as infinite dimensional), and in fact the comparison principle fails in general even for classical solutions. Nevertheless, we shall propose a new notion of viscosity solution to PPDE (\ref{PPDE1}) and establish its well-posedness, including the comparison principle, under an additional assumption  that $f$ is  nondecreasing in $y$. We note that this monotonicity condition is essentially the {\it proper} condition required in \cite{GIL-1992} for elliptic equations. For a standard parabolic equation like (\ref{PDE1}), this condition is redundant because it is implied from the Lipschitz condition by a standard change variable argument. However, the change variable argument fails for (\ref{PPDE1})  because of its non-local structure. We also note that Wang--Yong \cite{Wang-Yong-2015} proved the comparison principle for BSVIEs under the same monotonicity condition. As in the standard literature, since viscosity solution is a local notion (even with some non-local feature here), its comparison principle is much more challenging.

Finally, we investigate briefly two more general FBSVIEs, the coupled FBSVIE (with $b,\si$ depending on $Y$) and the type-II BSVIEs, and  extend the representation formula in these cases.  The more detailed studies on these equations are left for interested readers. We note particularly that our new representation formula for $\pa_\bx U$ relies on a linear type-II BSVIE. For this purpose, we establish a duality result for linear path dependent FSVIE which covers the corresponding results in Yong \cite{Yong-2006, Yong-2008}  and Peng--Yang \cite{Peng-Yang}, and provide an explicit solution for linear BSVIEs which generalizes the result of  Hu--{\O}ksendal \cite{Hu 2018}.

The rest of this paper is organized as follows. In Subsection \ref{sect-literature}, we provide a literature review on the closely related topics. Section \ref{sect-preliminary} collects some preliminary results which will be used in the paper. In Section \ref{sect-PPDE}, we establish the connection between FBSVIEs and PPDEs, and prove the existence of classical solutions under appropriate conditions. Section \ref{sect-viscosity} is devoted to the viscosity solutions of the PPDE.
We extend some results to coupled FBSVIEs and type-II BSVIEs in Section \ref{sect-extension}.
Finally in Section \ref{sect-repre}, we obtain a representation formula for the path derivative $\pa_\bx U(t,s,\bx)$.

\subsection{Literature review on some related topics}
\label{sect-literature}
For FSVIEs, we first refer to Nualart \cite{Nualart-2006} for a comprehensive exposition of fractional Brownian motion, which is a very special case of FSVIEs. In the state dependent case, the well-posedness of FSVIEs can be found in Berger--Mizel \cite{Berger-1980}. Since one cannot apply the Burkholder--Davis--Gundy inequalities for stochastic Volterra integral equations, the well-posedness of path dependent FSVIEs is actually more involved, and we refer to the recent work Ruan--Zhang \cite{Ruan-Zhang-2020}. There has been a growing number of publications on rough volatility models, for which FSVIE is a convenient tool. Besides \cite{CR, Euch, ER-2018,   GJR-2018, Viens-Zhang-2019}, a partial list also includes Abi Jaber-Larsson-Pulido \cite {ALP},  {\color{red}Alos--Leon--Vives \cite{ALV}}, Bayer--Friz--Gatheral \cite{BFG}, Bennedsen--Lunde--Pakkanen \cite{BLP}, Chronopoulou--Viens \cite{CV},   Cuchiero--Teichmann \cite{CT}, Fouque--Hu \cite{FH},  Gatheral--Keller--Ressel \cite{GK},  and  Gulisashvili--Viens--Zhang \cite{GVZ}.

BSVIE was first introduced by Lin \cite{Lin-2002} in a special form. The general form, including type-II BSVIEs, has been studied systematically by Yong \cite{Yong-2006, Yong-2008},  followed by Djordjevic--Jankovic \cite{Djordjevic-Jankovic-2013,Djordjevic-Jankovic-2015}, Shi--Wang--Yong \cite{Shi-Wang-Yong-2015}, Wang--Yong \cite{Wang-Yong-2015},  Wang--Zhang \cite{Wang-Zhang-2017},  Overbeck--Roder \cite{Overbeck-Roder-2018}, Hu--{\O}ksendal \cite{Hu 2018}, Wang--Yong \cite{Wang-Yong-2019},  Popier \cite{Popier-2020}, Hernandez--Possamai \cite{HP2}, to mention a few. In particular, we note that Hamaguchi \cite{Hamaguchi-2020} proved the well-posedness of coupled FBSVIEs over small time horizon. The well-posedness of coupled FBSVIEs over arbitrary time horizon  is still open, to our best knowledge.  We also refer to  \cite{Agram 2018,  Di Persio-2014,  Hernandez-2020, Kromer-Overbeck 2017, Wang-Sun-Yong-2019, Wang-Yong-2019b, Yong 2007} again for some applications of BSVIEs.

The notion of PPDE was first proposed by Peng \cite{Peng-2010}. A crucial tool is the functional It\^o formula, initiated by Dupire \cite{Dupire-2019} and further developed by Cont--Fourni\'{e} \cite{CR1, CR2}. In the semilinear case, Peng--Wang \cite{Peng-Wang-2016} obtained the classical solution and Ekren--Keller--Touzi--Zhang \cite{EKTZ} established the viscosity solution theory. The viscosity solution approach of \cite{EKTZ} has been successfully extended to the fully nonlinear case by Ekren--Touzi--Zhang \cite{ETZ1, ETZ2}, Ren--Touzi--Zhang \cite{RTZ 2017}, and Ren--Rosestolato \cite{RR}. We also refer to Barrasso--Russo \cite{Barrasso-Russo2020}, Cosso--Russo \cite{CR1}, Leao--Ohashi--Simas \cite{LOS}, Lukoyanov \cite{Lukoyanov}, Peng--Song \cite{PS} for some related works, in particular to Cosso--Russo \cite{CR2}, Zhou \cite{Zhou} for some recent interesting developments, and to the book Zhang \cite{Zhang-2017} for more references. We shall remark though that the PPDEs in all the above works are in the semi-martingale setting.  Our PPDE is associated with SVIEs, and the corresponding functional It\^o formula was proved by \cite{Viens-Zhang-2019}. Another closely related work also beyond semi-martinagle setting is Barrasso--Russo \cite{Barrasso-Russo2019}, which studies the so-called decoupled mild solution for a PPDE associated with Gaussian processes.

\section{Preliminaries}\label{sect-preliminary}

Let $T>0$ be the time horizon,  $\Omega:=C([0,T];\dbR^d)$ the canonical space, $W$ the canonical process (namely $W(\omega) = \omega$), $\dbP$ the Wiener measure (namely $W$ is a standard $d$-dimensional Brownian motion under $\dbP$), and $\dbF:= \dbF^W$ augmented with the $\dbP$-null sets. Denote
$$\ba{c}
\ns\ds\dbT=[0,T],\qq\dbT^2=[0,T]\times[0,T],\\
\ns\ds\dbT^2_-=\big\{(t,s)\bigm|0\les s\les t\les T\big\},\qq\dbT^2_+=\big\{(t,s)\bigm| 0\les t\les s\les T\big\}.\ea$$
Here ``$_-$" indicates the left neighborhood of $t$, and ``$_+$" indicates the right neighborhood of $t$. For any Euclidean space $\dbH$ (say, $\dbR^n$, $\dbR^{m\times d}$, etc.), let
$$
\dbL^p_\dbF(0,T;\dbH)=\Big\{\f:[0,T]\times \Omega\to\dbH\bigm|\f\hb{ is $\dbF$-progressively measurable, }\dbE\int_0^T|\f(s)|^pds<\infty\Big\}.$$
Our state space is $\dbX:=C([0,T];\dbR^n)$, equipped  with the uniform norm:
\bel{uniform}
\|\bx\|=\sup_{t\in[0,T]}|\bx_t|,\qq\forall\bx\in\dbX.
\ee

In this section we review and present some basic results concerning forward and backward SVIEs,
including a continuous-norm estimate for the adapted solution to a class of BSVIEs.
Moreover, among other things, we shall introduce two auxiliary processes $\wt X$ and $\wt Y$ so that the flow property of the adapted solutions can be established in an extended sense. It turns out that such a property will play an essential role in proving the relation between FBSVIEs and PPDEs.

Before going further, we make a convention which will be used in the rest of the paper.
For any map $\f:\dbT^2\times\dbX\times\dbH\times\Omega\to\wt\dbH$,
where $\dbH$ and $\wt\dbH$ are any Euclidean spaces (could be $\dbR^m$, $\dbR^m\times\dbR^{m\times d}$, etc.),
we simply say that $\f$ is {\it progressively measurable} if
\bel{f}\f(t,r,\bx,h,\omega)=\f(t,r,\bx_{r\land\cd},h,\omega_{r\land\cd}),\q\forall(t,r,\bx,h,\omega)
\in\dbT^2 \times \dbX\times\dbH\times\Omega,\ee
and the above map is measurable. In the above $\dbT^2$ can be replaced by $\dbT^2_\pm$; also some independent variables can be absent.

\subsection{The well-posedness and flow property of FSVIEs}\label{sect-Fflow}

Given $\bx\in \dbX$, consider an FSVIE:
\bea
\label{FSVIE}
X_t = \bx_t + \int_0^t b(t,r, X_\cd) dr + \int_0^t \si(t,r, X_\cd) dW_r,\q t\in\dbT.
\eea
We shall assume the following.
\begin{assumption}
\label{assum-FSVIE}\rm
The map $(b,\si):\dbT^2_-\times\dbX\to\dbR^n\times\dbR^{n\times d}$ is progressively measurable satisfying:
\begin{enumerate}[(i)]
\item For some constant $C_0>0$, $|b(t,r,\b0)|+|\si(t,r,\b0)|\les C_0$ for all $(t,r)\in\dbT^2_-.$

\item The map $\bx\mapsto(b(t,r,\bx),\si(t,r,\bx))$ is uniformly Lipschitz continuous under the norm $\|\cd\|$.

\item The map $t\mapsto(b(t,r,\bx),\si(t,r,\bx))$ is differentiable, with $\pa_tb$ and $\pa_t\si$ also satisfying the conditions as in (i) and (ii). \end{enumerate}
\end{assumption}

\ms

The following result follows from Ruan--Zhang \cite{Ruan-Zhang-2020}.

\begin{proposition}
\label{prop-fSVIE} \sl
Under  {\rm Assumption \ref{assum-FSVIE}}, FSVIE \rf{FSVIE} admits a unique strong solution $X$ such that $X$ is continuous in $t$ and the following estimate holds true: for any $p>1$,
\bel{fSVIEest}
\dbE\big[\|X\|^p\big]\les C_p\big[1+\|\bx\|^p\big].\ee
\end{proposition}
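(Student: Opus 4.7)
The plan is to adapt Picard iteration to the Volterra setting, with the central difficulty being that $t\mapsto\int_0^t\si(t,r,X_\cd)dW_r$ is \emph{not} a martingale in $t$, so the standard BDG-based control of its supremum does not apply directly. First I would set up the iteration $X^0\equiv\bx$ and, for $n\ges 0$,
\begin{equation*}
X^{n+1}_t = \bx_t + \int_0^t b(t,r,X^n_\cd)\,dr + \int_0^t \si(t,r,X^n_\cd)\,dW_r,\qquad t\in\dbT,
\end{equation*}
and aim to bound $\dbE\big[\sup_{t\les T'}|X^{n+1}_t-X^n_t|^p\big]$ uniformly in a form to which Gronwall applies.

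The key device is Assumption \ref{assum-FSVIE}(iii): writing $\si(t,r,\bx)=\si(r,r,\bx)+\int_r^t\pa_u\si(u,r,\bx)\,du$ (and analogously for $b$) and invoking stochastic Fubini,
\begin{equation*}
\int_0^t\si(t,r,X_\cd)\,dW_r = \int_0^t\si(r,r,X_\cd)\,dW_r + \int_0^t\Big(\int_0^u\pa_u\si(u,r,X_\cd)\,dW_r\Big)du.
\end{equation*}
The first summand is a genuine $t$-martingale, so BDG controls its supremum; the second is a Lebesgue integral in $u$ of a martingale that BDG bounds pointwise in $u$, after which Minkowski turns it into $\int_0^T(\cdots)du$. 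Because $\pa_u\si$ inherits the growth and Lipschitz constants of $\si$ by Assumption (iii), each piece is dominated, up to $1+\|\bx\|^p$, by $\int_0^t\dbE[\sup_{s\les u}|X^n_s|^p]\,du$; the drift is handled identically via classical Fubini.

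Feeding these into the Picard scheme, Gronwall's lemma yields both the uniform-in-iterate bound $\sup_n\dbE[\|X^n\|^p]\les C_p(1+\|\bx\|^p)$ and a contraction
\begin{equation*}
\dbE\big[\sup_{t\les T'}|X^{n+1}_t-X^n_t|^p\big] \les C_p\int_0^{T'}\dbE\big[\sup_{s\les u}|X^n_s-X^{n-1}_s|^p\big]\,du,
\end{equation*}
which iterates to a summable series, producing a strong solution $X$ and the claimed $L^p$ estimate \rf{fSVIEest}; uniqueness follows from the same contraction applied to any two candidates.

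The main obstacle is pathwise continuity of $t\mapsto X_t$, since the martingale-based route is again unavailable. Here I would apply the Kolmogorov continuity criterion: for $s<t$, split
\begin{equation*}
\int_0^t\si(t,r,X_\cd)\,dW_r - \int_0^s\si(s,r,X_\cd)\,dW_r = \int_s^t\si(t,r,X_\cd)\,dW_r + \int_0^s[\si(t,r,X_\cd)-\si(s,r,X_\cd)]\,dW_r,
\end{equation*}
bound the first piece by BDG to order $|t-s|^{p/2}$, and control the second by $|t-s|^p$ using $|\si(t,r,\bx)-\si(s,r,\bx)|\les|t-s|\sup_u|\pa_u\si(u,r,\bx)|$ from Assumption (iii); the drift is similar. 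Choosing $p$ large enough that the resulting exponent exceeds $1$ yields a continuous modification. Since Ruan--Zhang \cite{Ruan-Zhang-2020} carries out exactly this analysis for path dependent FSVIEs, I would appeal to that reference for the quantitative details rather than reproduce them here.
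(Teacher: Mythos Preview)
Your proposal is correct and matches the paper's approach: the paper itself does not give a proof but simply cites Ruan--Zhang \cite{Ruan-Zhang-2020}, and the decomposition $\si(t,r,\bx)=\si(r,r,\bx)+\int_r^t\pa_u\si(u,r,\bx)\,du$ together with stochastic Fubini is precisely the device that reference uses to recover BDG control in the Volterra setting (the paper's remark after the proposition flags exactly this point). One minor difference: for pathwise continuity the paper hints at an alternative route ``similar to Proposition \ref{prop-BSVIE}'' via an auxiliary two-time process, which would also work under the weaker modulus-of-continuity condition \rf{Holderf} rather than differentiability in $t$; your Kolmogorov argument is perfectly fine under Assumption \ref{assum-FSVIE}(iii) as stated.
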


We remark that due to the first time variable $t$ in $\si$,
one cannot directly apply the Burkholder--Davis--Gundy inequalities in the Volterra setting. Assumption \ref{assum-FSVIE} (iii) helps us to get around that. In the state dependent case: $\si = \si(t, r, X_r)$ ($b$ can be path dependent, although in the literature, typically, it is also state dependent), the well-posedness of  \rf{FSVIE}  follows from standard arguments, see e.g. Berger--Mizel \cite{Berger-1980}. The pathwise continuity of $X$ as well as the norm estimate \rf{fSVIEest} hold true for $b, \si$ satisfying weaker continuity in the spirit of \rf{Holderf} below. The arguments are similar to those of  Proposition \ref{prop-BSVIE} below and we skip the details. It will be interesting to see if it is possible to weaken
Assumption \ref{assum-FSVIE} (iii) in the path dependent case.

Note that  $X$ is neither a Markov process nor a semimartingale. Even worse, in general the flow property fails in the following sense: for fixed $t$,
\bea
\label{flowX1}
X_s \neq X_t + \int_t^s b(s,r, X_\cd) dr + \int_t^s \si(s,r, X_\cd) dW_r,\q s\in (t, T].
\eea
One may refer to this as the {\it time-inconsistency}. To overcome this deficiency,  Viens--Zhang \cite{Viens-Zhang-2019} introduced an auxiliary  process with two time variables:
\bea
\label{wtX}
\wt X^s_t = \bx_s + \int_0^t b(s,r, X_\cd) dr + \int_0^t \si(s,r, X_\cd) dW_r,\q (t,s)\in\dbT^2_+.
\eea
This process enjoys the following nice properties:

\ms

$\bullet$ For fixed $s$, the process $[0,s]\ni t\mapsto\wt X^s_t$ is an $\dbF$-semimartingale with $\wt X^t_t=X_t$;

\ss

$\bullet$ For fixed $t$, the process $[t,T]\ni s\mapsto\wt X^s_t $ is $\cF_t$-measurable and  continuous;

\ss

$\bullet$ The {\it flow property} holds in the following sense: for any $\dbF$-stopping time $\t$,
\bea
\label{flowX2}
X_s = \wt X^s_\t  + \int_\t^s b(s,r, X_\cd) dr + \int_\t^s \si(s,r, X_\cd) dW_r,\q s\in [\t, T].
\eea

We remark that, in the state dependent case as in (\ref{FBSVIE1}), \rf{flowX2} implies
$$
X_s = \wt X^s_t  + \int_t^s b(s,r, X_r) dr + \int_t^s \si(s,r, X_r) dW_r,\q (t,s)\in\dbT^2_+.
$$
One can easily see that, conditional on $\wt X^{[t,T]}_t$, $X_{[0, t)}$ and $X_{(t, T]}$ are conditionally independent. So this can be viewed as a generalized Markov property.

\subsection{The well-posedness and flow property of BSVIEs}
\label{sect-Bflow}

Consider the following path dependent BSVIE:
\bea
\label{BSVIE}Y_t=g(t,X_\cd)+\int_t^Tf(t,r,X_\cd,Y_r,Z^t_r)dr-\int_t^TZ^t_rdW_r,\q t\in \dbT,
\eea
where $Y$ is $m$-dimensional and hence $Z$ is $(m\times d)$-dimensional. We shall assume
\begin{assumption}
\label{assum-BSVIE}\rm
The map $f:\dbT^2_+\times\dbX\times\dbR^m\times\dbR^{m\times d}\to\dbR^m$ is progressively measurable and the map $g:\dbT\times\dbX\to\dbR^m$ is $\cF_T$-measurable satisfying:

\begin{enumerate}[(i)]

\item For some constant $C_0>0$, it holds
$$|f(t,r,\bx,0,0)|+|g(t,\bx)|\les C_0\big[1+\|\bx\|\big],\qq\forall(t,r,\bx)\in\dbT^2_+\times\dbX;$$

\item The map $(y,z)\mapsto f(t,r,\bx,y,z)$ is uniformly Lipschitz continuous;

\item The map $t\mapsto(f(t,r,\bx,y,z),g(t,\bx))$ is locally uniformly continuous in the following sense: for some modulus of continuity function $\rho$,
\bel{Holderf}
\begin{aligned}
&|f(t-\d,r,\bx,y,z)-f(t,r,\bx,y,z)|+|g(t-\d,\bx)-g(t,\bx)| \\
&\q\les C\big[1+\|\bx\|+|y|+|z|\big]\rho(\d),\q\forall (t,r,\bx,y,z)\in\dbT^2_+\times\dbX\times \dbR^m\times\dbR^{m\times d},\,\d\in[0,t].
\end{aligned}
\ee
\end{enumerate}
\end{assumption}

\begin{proposition}
\label{prop-BSVIE} \sl
Under  {\rm Assumptions \ref{assum-FSVIE}} and {\rm \ref{assum-BSVIE}}, BSVIE \rf{BSVIE} admits a unique strong solution $(Y, Z)$ such that $Y$ is continuous in $t$ and the following estimate holds true: for each $p>1$,
\bel{BSVIEest}
\dbE\Big[\sup_{0\les t\les T} |Y_t|^p \Big] + \sup_{0\les t\les T}\dbE\Big[\Big(\int_t^T |Z^t_s|^2ds\Big)^{{p\over2}}\Big]
\les C_p\big[1+\|\bx\|^p\big].
\ee
\end{proposition}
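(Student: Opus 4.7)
The plan is to combine a Picard-type fixed-point construction with a careful analysis of the $t$-regularity of $Y$, exploiting the modulus of continuity $\rho$ from Assumption~\ref{assum-BSVIE}(iii). The existence/uniqueness portion follows the standard type-I BSVIE theory of Yong~\cite{Yong-2008}; the key new ingredient is upgrading the usual pointwise bound $\sup_t\dbE[|Y_t|^p]$ to the continuous-norm bound $\dbE[\sup_t|Y_t|^p]$, and correspondingly producing a pathwise continuous version of $t\mapsto Y_t$.

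First, for each candidate $y\in\dbL^p_\dbF(0,T;\dbR^m)$ and each $t\in\dbT$, I would solve the classical BSDE on $[t,T]$
$$\wt Y^t_s=g(t,X_\cd)+\int_s^T f(t,r,X_\cd,y_r,\wt Z^t_r)\,dr-\int_s^T\wt Z^t_r\,dW_r,$$
which is uniquely solvable by Pardoux--Peng theory, and set $\Phi(y)_t:=\wt Y^t_t$. Equipping $\dbL^p_\dbF(0,T;\dbR^m)$ with the weighted norm $\|y\|^p_{\b,p}:=\dbE\int_0^T e^{\b t}|y_t|^p\,dt$ for $\b$ large, the $(y,z)$-Lipschitz continuity of $f$ and standard BSDE a priori estimates turn $\Phi$ into a contraction; its fixed point produces $Y$, and the associated $Z$-component of each BSDE yields $Z^\cd$. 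Applying the BSDE $L^p$-estimate pointwise in $t$ together with Assumption~\ref{assum-BSVIE}(i), Proposition~\ref{prop-fSVIE}, and a backward Gr\"onwall step on $\phi(t):=\dbE[|Y_t|^p]$, one obtains
$$\sup_t\dbE\Big[|Y_t|^p+\Big(\int_t^T|Z^t_s|^2\,ds\Big)^{p/2}\Big]\les C_p[1+\|\bx\|^p],$$
which already settles the $Z$-part of \rf{BSVIEest}.

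For the $t$-regularity of $Y$, I would fix $0\les t_1<t_2\les T$ and decompose
$$Y_{t_1}-Y_{t_2}=\big[\wt Y^{t_1}_{t_1}-\wt Y^{t_1}_{t_2}\big]+\big[\wt Y^{t_1}_{t_2}-\wt Y^{t_2}_{t_2}\big].$$
The first bracket equals $\int_{t_1}^{t_2}f(t_1,r,X_\cd,Y_r,Z^{t_1}_r)\,dr-\int_{t_1}^{t_2}Z^{t_1}_r\,dW_r$, manifestly pathwise continuous in its endpoints, with $L^p$-modulus of order $|t_2-t_1|^{1/2}$ via BDG and H\"older. For the second bracket, $\wt Y^{t_1}-\wt Y^{t_2}$ solves a linear BSDE on $[t_2,T]$ whose terminal datum and driver difference are bounded, via Assumption~\ref{assum-BSVIE}(iii), by $C[1+\|X\|+|Y_r|+|Z^{t_1}_r|]\,\rho(|t_2-t_1|)$ modulo a Lipschitz term in $Z^{t_1}-Z^{t_2}$; a standard BSDE estimate then yields $\dbE[|\wt Y^{t_1}_{t_2}-\wt Y^{t_2}_{t_2}|^p]\les C_p\,\rho(|t_2-t_1|)^p[1+\|\bx\|^p]$, giving an $L^p$-modulus of continuity of $t\mapsto Y_t$ controlled by $\rho$.

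The hard part is converting this $L^p$-modulus into the pathwise bound $\dbE[\sup_t|Y_t|^p]\les C_p[1+\|\bx\|^p]$ and a continuous modification, since $\rho$ is only a general modulus and Kolmogorov's continuity criterion does not directly apply. My plan is to exploit the BSDE sup-estimate on $s\mapsto\wt Y^t_s$ over $[t,T]$, which dominates $|Y_t|=|\wt Y^t_t|$ pointwise in $t$ by a random variable consisting of a $\mathcal{F}_T$-measurable ``data'' part plus $\sup$-type martingale increments that are controllable through BDG. This pointwise bound can then be combined with the decomposition above along a dense countable set $\{t_j\}\subset\dbT$: for generic $t$, write $|Y_t|\les|Y_{t_j}|+|Y_t-Y_{t_j}|$ with $t_j$ the closest mesh point, estimate the first term by the BSDE sup-bound, and control the second by $\rho(\text{mesh})$-factors from the decomposition. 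A refined dyadic argument in the spirit of Garsia--Rodemich--Rumsey should then deliver the pathwise continuous modification and the desired sup estimate. As a fallback, one can first prove the result under the auxiliary hypothesis that $\rho$ is H\"older (so that Kolmogorov applies directly) and then obtain the general modulus by approximating the coefficients by $t$-H\"older ones and invoking the already-established $L^p$-stability of the BSVIE.
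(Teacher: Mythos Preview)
Your overall strategy—existence via contraction, then regularity via the auxiliary BSDE family $\wt Y^t_\cd$ and the identification $Y_t=\wt Y^t_t$—matches the paper's, and your pointwise-in-$t$ estimate together with the $Z$-part of \rf{BSVIEest} is fine.

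The gap is precisely in the ``hard part'' you flag. From your decomposition you obtain only an $L^p$-modulus of the form $\dbE[|Y_{t_1}-Y_{t_2}|^p]\les C_p[1+\|\bx\|^p]\big(\rho(|t_1-t_2|)^p+|t_1-t_2|^{p/2}\big)$, and you then propose to upgrade it via a GRR/dyadic argument or via H\"older approximation of the coefficients. Neither closes as stated. A GRR-type lemma needs a quantitative integrability condition on the modulus (roughly $\int_0^1\rho(u)u^{-1}du<\infty$ in the relevant form), which a bare modulus of continuity need not satisfy. The H\"older fallback is circular: BSVIE $L^p$-stability gives $\sup_t\dbE[|Y^n_t-Y_t|^p]\to0$, not $\dbE[\sup_t|Y^n_t-Y_t|^p]\to0$, so pathwise continuity of the approximants does not transfer to the limit.

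The paper bypasses all of this with one observation you are missing: the BSDE a priori estimate for $\wt Y^t-\wt Y^{t'}$ is a \emph{conditional} (hence almost sure) bound, not merely an $L^p$ one. Concretely, for each fixed pair $t,t'$ and every $s$,
$$|\wt Y^t_s-\wt Y^{t'}_s|\les C_p\Big[1+\big(\dbE_s[\|X\|^p]\big)^{1/p}\Big]\rho(|t-t'|),\qq\as$$
Since $s\mapsto\wt Y^t_s$ is pathwise continuous one may take $\sup_s$ on both sides; the random constant $C_p(\omega):=C_p\big[1+\sup_s(\dbE_s[\|X\|^p])^{1/p}\big]$ lies in $L^p$ by Doob's maximal inequality applied to the martingale $s\mapsto\dbE_s[\|X\|^p]$. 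On a countable dense set $\{t_i\}\subset\dbT$ this yields an a.s.\ uniform modulus in the first variable, so $\bar Y^t_s:=\limsup_{t_j\to t}\wt Y^{t_j}_s$ is jointly continuous in $(t,s)$ and coincides a.s.\ with $\wt Y^t_s$ for each fixed $t$; in particular $Y_t=\wt Y^t_t$ has a continuous version. No Kolmogorov or GRR is needed. The final bound $\dbE[\sup_t|Y_t|^p]$ is obtained the same way: the conditional BSDE estimate gives $|Y_t|^p\les C_p\,\dbE_t\big[1+\|X\|^p+\int_t^T|Y_r|^pdr\big]$ a.s., and once $Y$ is known to be continuous one takes $\sup_t$ and applies Doob again to the right-hand side.
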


The proof of the well-posedness of BSVIE (\ref{BSVIE}) is standard and could be found in Yong \cite{Yong-2008}, where the pathwise continuity of $Y$ was proved for a more general BSVIE, but under much stronger technical conditions. Our arguments for the pathwise continuity and the above estimate \rf{BSVIEest} seems to be new in the literature. We note that (\ref{Holderf}) is much weaker than Assumption \ref{assum-FSVIE} (iii),  because  $f$  is state dependent on $(Y, Z)$. To facilitate the proof, we introduce the following standard BSDE parameterized by $t\in\dbT$ with adapted solution $(\wt Y^t,\wt Z^t)$:
\bel{wtYZ}
\wt Y^t_s=g(t,X_\cd)+\int_s^Tf(t\land r,r,X_\cd,Y_r,\wt Z^t_r)dr-\int_s^T\wt Z^t_r dW_r,\q s\in[0,T].
\ee

\begin{proof}
First, from \cite{Yong-2008} we know \rf{BSVIE} admits a unique solution $(Y, Z)$ such that $Y\in \dbL_\dbF^2(0,T;\dbR^m)$ and $Z^t\in \dbL^2_\dbF(t,T;\dbR^{m\times d})$. Compare \rf{BSVIE} with the following linear BSDE on $[t,T]$:
$$\h Y^t_s=g(t,X_\cd)+\int_s^Tf(t\land r,r,X_\cd,Y_r,Z^t_r)dr-\int_s^T\h Z^t_rdW_r,\q s\in [t,T].$$
It is obvious that $\h Y^t_t=Y_t$ and $\h Z^t_r=Z^t_r$. This implies that $(\h Y^t,\h Z^t)$ satisfies \rf{wtYZ}. Then we have
\bel{BSDE-BSVIE-equal}
\wt Y^t_t  = Y_t,\q \wt Z^t_r = Z^t_r,\q (t,r)\in\dbT^2_+.
\ee

Next, by \rf{fSVIEest}, \rf{Holderf} and the standard BSDE arguments, we have
\bel{wtYest}
\left.\ba{c}
\displaystyle\sup_{0\les t\les T} \dbE\Big[ \sup_{0\les s\les T} |\wt Y^{t}_s|^p + \Big(\int_0^T |Z^t_s|^2ds\Big)^{p\over 2}\Big] \les C_p\big[1+\|\bx\|^p\big];\ms\\
\displaystyle  |\wt Y^{t}_s - \wt Y^{t'}_s| \les C_p\Big[1+\big(\dbE_s[\|X\|^p]\big)^{1\over p}\Big] \rho(|t-t'|),~\as,\q \forall t, t', s\in \dbT.
\ea\right.
\ee
 Note that $\wt Y_s^{t}$, $\wt Y_s^{t'}$, and $\dbE_s[\|X\|^p]$ are pathwise continuous in $s$, then we have
\bea
\label{wtYholder}
\sup_{s\in\dbT} |\wt Y^{t}_s - \wt Y^{t'}_s| \les C_p\Big[1+\sup_{s\in\dbT} \big(\dbE_s[\|X\|^p]\big)^{1\over p}\Big]\rho(|t-t'|),~ \as,\q\forall t, t'\in \dbT.
\eea
Note that, by (standard) Doob's maximum inequality,
\beaa
\dbE\Big[\sup_{s\in \dbT} \dbE_s[\|X\|^p] \Big] \les C_p \Big(\dbE[\|X\|^{2p}]\Big)^{1\over 2} \les C_p\big[1+\|\bx\|^p\big]<\infty.
\eeaa
Let $\{t_i\}_{i\ges 1}$ be the rationals in $[0, T]$.
There exits an $\Omega_1\subset\Omega$ such that $\dbP(\Omega_1)=1$, $\wt Y^{t_i}_s(\omega)$ is continuous in $s$, and
\bea
\label{wtYholder1}
\left.\ba{c}
\ds  \sup_{s\in\dbT} |\wt Y^{t_i}_s - \wt Y^{t_j}_s|(\omega) \les  C_p(\omega)\rho(|t_i-t_j|),\q \forall (i, j),~ \forall \omega\in \Omega_1,\\
\ds \mbox{where}\q C_p(\omega):= \Big[1+\sup_{s\in\dbT} \big(\dbE_s[\|X\|^p]\big)^{1\over p}\Big](\omega) <\infty,\q\forall \omega\in \Omega_1.
\ea\right.
\eea
For any $t\in \dbT$, by \rf{wtYholder}, there exists an $\Omega^t\subset\Omega$ such that $\dbP(\Omega^t)=1$ and
\bea
\label{wtYholder2}
\sup_{s\in\dbT} |\wt Y^{t}_s - \wt Y^{t_j}_s|(\omega) \les C_p(\omega)\rho(|t-t_j|),\q \forall j,~ \forall \omega\in \Omega^t\cap \Omega_1.
\eea
For any $(t,s)\in \dbT^2$, we define
\bea
\label{barYt}
\bar Y^{t}_s (\omega) :=\limsup_{t_j \to t} \wt Y^{t_j}_s(\omega),\q  \omega\in\Omega.
\eea
By \rf{wtYholder1} we see that the above $\limsup$ is actually a limit  for $\omega\in \Omega_1$. Then,  for any $ \omega\in \Omega_1$,
\bea
\label{wtYholder3}
\bar Y^{t}_s(\omega)~\mbox{is continuous in $s$, }\q \sup_{s\in\dbT} |\bar Y^{t}_s - \bar Y^{t'}_s|(\omega) \les  C_p(\omega)\rho(|t-t'|),\q \forall t, t'\in \dbT.
\eea
So $\bar Y$ is (uniformly) continuous in $(t,s)\in \dbT^2$ for all $\omega\in \Omega_1$.  Moreover, by \rf{wtYholder2} we have
\bea
\label{wtYholder5}
\wt Y^{t}_s(\omega)=\bar Y^t_s(\omega),\q \forall s\in \dbT,~ \forall \omega\in\Omega^t\cap \Omega_1.
\eea
Since $\dbP(\Omega^t\cap \Omega_1)=1$, so $\bar Y$ is a desired version of $\wt Y^t_s$,
and thus, by always considering this version,  $\wt Y^t_s$ is jointly continuous in $(t,s)$, a.s.
In particular, this implies that $Y_t = \wt Y^t_t$ is continuous in $t$, a.s.

Finally, applying the standard BSDE estimates on \rf{wtYZ} we have
$$
|Y_t|^p=|\wt Y^t_t|^p\les C_p\dbE_t\Big[1+\|X\|^p+\int_t^T|Y_r|^pdr\Big],\q\as
$$
Since $t\mapsto Y_t$ is continuous almost surely, we obtain from the Doob's maximum inequality that
$$\dbE\Big[\sup_{0\les t\les T}|Y_t|^p\Big]\les C_p\Big(\dbE\Big[1+\|X\|^{2p}+\int_0^T|Y_r|^{2p} dr\Big]\Big)^{1\over 2}\les C_p\big[1+\|\bx\|^p\big],$$
where the second inequality thanks to \rf{fSVIEest} and the first line of \rf{wtYest}. This, together with the first line of \rf{wtYest} again, implies \rf{BSVIEest}.
\end{proof}

\ms

Similar to the forward case, in general the flow property fails in the following sense: for fixed $s$,
\bel{flowY1}
Y_t\ne Y_s+\int_t^sf(t,r,X_\cd,Y_r,Z^t_r)dr-\int_t^sZ^t_rdW_r,\q t\in[0,s).\ee
However, we may recover the flow property by utilizing the auxiliary process $\wt Y$:
\bel{flowY2}
Y_t=\wt Y^t_s+\int_t^s f(t,r,X_\cd,Y_r,Z^t_r)dr-\int_t^sZ^t_rdW_r,\q t\in[0,s].
\ee

\subsection{The functional It\^{o} formula}
The materials in this subsection follow from Viens--Zhang \cite{Viens-Zhang-2019}. Recall $\dbX:= C(\dbT;\dbR^n)$ and define
\bel{XX}\ba{c}
\ns\ds\dbX_t:= C([t,T];\dbR^n);\\
\ns\ds\h\dbX:=D(\dbT;\dbR^n)\equiv\Big\{\bx:\dbT\to\dbR^n\bigm|\bx\hb{ is right-continuous with left-limits}\Big\}.
\ea\ee
Clearly, $\dbX$ is a subset of $\h\dbX$. Also, hereafter, for any $\eta\in\dbX_t$, we automatically extend it to be zero on $[0,t)$, still denote it by $\eta$. Then $\dbX_t\subseteq\h\dbX$. Next, we define
$$
\ds\L:=\dbT\times\dbX,\q\h\L:=\Big\{(t,\bx)\in\dbT\times\h\dbX:\bx|_{[t,T]}\in\dbX_t\Big\},\q
\ds{\bf d}\big((t,\bx),(t',\bx')\big):=|t-t'|+\|\bx-\bx'\|,
$$
with $\|\bx\|=\sup_{t\in\dbT}|\bx_t|$ for $\bx\in \h\dbX$.
It can be shown that ${\bf d}$ is a metric under which $\h\L$ is a complete metric space. Now, let $C^0(\h\L)$ denote the set of all functions $u:\h\L\to\dbR$ which are continuous under ${\bf d}$. For any $u\in C^0(\h\L)$ and given $(t,\bx)\in\h\L$, define
\bel{u_t}\pa_tu(t,\bx)=\lim_{\d\da0}{u(t+\d,\bx)-u(t,\bx)\over\d},
\ee
provided the limit exists, and define $\pa_\bx u(t,\bx)$ as the Fr\'{e}chet derivative with respect to $\bx|_{[t, T]}$, namely $\pa_\bx u(t,\bx):\dbX_t\to\dbR$ is the linear  functional satisfying the following:
\bel{u_o}u(t,\bx+\eta)-u(t,\bx)=\lan\pa_\bx u(t,\bx),\eta\ran+o(\|\eta\|),\qq\forall\eta\in\dbX_t.\ee
It is clear that this is equal to the G\^{a}teux derivative:
\bel{gateaux}\lan\pa_\bx u(t,\bx),\eta\ran=\lim_{\e\to0}{u(t,\bx+\e\eta)-u(t,\bx)\over\e},\qq\forall\eta\in\dbX_t.
\ee
Similarly we define the second order derivative $\pa^2_{\bx\bx}u(t,\bx)$ as a bilinear functional on $\dbX_t\times \dbX_t$:
\bel{u_oo}\lan\pa_\bx u(t,\bx+\eta),\eta'\ran-\lan\pa_\bx u(t,\bx),\eta'\ran=\lan\pa^2_{\bx\bx}u(t,\bx),(\eta,\eta')\ran+o(\|\eta\|),\qq\forall
\eta,\eta'\in\dbX_t.
\ee

\ms
\bde{hatC12} \rm
Let $C^{1,2}_+(\h\L)$ denote the set of  $u\in C^0(\h\L)$ such that $\pa_tu,\pa_\bx u,\pa_{\bx\bx}^2u$ exist on $\dbT\times\h\dbX$ and satisfy:
\begin{enumerate}[(i)]
\item There exist constants $\k>0$ and $C>0$ such that, for any $(t, \bx)$,
\beaa
|\pa_t u(t, \bx)|+\!\!\!\! \sup_{\eta\in \dbX_t,\, \|\eta\|\les 1} \!\!\!\!  |\lan \pa_\bx u(t, \bx), \eta\ran| +\!\!\!\! \sup_{\eta, \eta'\in \dbX_t,\, \|\eta\|, \|\eta'\|\les 1}\!\!\!\!
 |\lan \pa^2_{\bx\bx} u(t,\bx), (\eta, \eta')\ran| \les C[1+\|\bx\|^\k].
\eeaa
\item For any $\eta, \eta'\in \dbX$, $\pa_t u(t, \bx)$, $\lan \pa_\bx u(t, \bx), \eta|_{[t, T]}\ran$, $\lan \pa_{\bx\bx} u(t, \bx), (\eta|_{[t, T]}, \eta'|_{[t, T]})\ran$ are continuous in $(t, \bx)$, where the continuity in $t$ always means right-continuity.

\item There exist $\k>0$ and a modulus of continuity function $\rho$ such that:
$$
\big|\lan \pa^2_{\bx\bx} u(t, \bx) -  \pa^2_{\bx\bx} u(t, \bx'), (\eta, \eta)\ran\big| \les \big[1+\|\bx\|^\k + \|\bx'\|^\k\big] \|\eta\|^2 \rho(\|\bx-\bx'\|).
$$
\end{enumerate}
\ede
We remark that the function $u$ will be involved in some backward equations, so both in \rf{u_t} and in Definition \ref{hatC12} (ii) the time regularity is only required to be from right. We note that \cite{Viens-Zhang-2019} assumes $\lan \pa_\bx u(t, \bx), \eta|_{[t, T]}\ran$ etc is continuous in $t$, but actually it can only be right continuous because of the indicator function in $\eta|_{[t, T]}$ and in all the arguments in \cite{Viens-Zhang-2019} only right continuity is used.  For any $u_1,u_2\in C^{1,2}(\h\L)$, if $u_1=u_2$ on $\L$, by \cite{Viens-Zhang-2019} we have, for any $(t,\bx)\in \L$ and $\eta\in \dbX_t$,
\bel{pauequal}
\left.\ba{c}
\pa_tu_1(t,\bx)=\pa_tu_2(t,\bx),\q\lan\pa_\bx u_1(t,\bx),\eta\ran=\lan\pa_\bx u_2(t,\bx),\eta\ran,\\
\ns \lan\pa^2_{\bx\bx}u_1(t,\bx),(\eta,\eta)\ran=\lan\pa^2_{\bx\bx}u_2(t,\bx),(\eta,\eta)\ran.
\ea\right.
\ee

\ms
\bde{C12} \rm Let $C^{1,2}_+(\L)$ be the set of functions $u:\L\to\dbR$ such that there exists a $\h u\in C^{1,2}_b(\h\L)$ satisfying $u=\h u$ on $\L$. For such a case, define
$$
\pa_tu(t,\bx)=\pa_t\h u(t,\bx),\q\pa_\bx u(t,\bx)=\pa_\bx\h u(t,\bx),\q \pa_{\bx\bx}^2u(t,\bx)=\pa_{\bx\bx}^2\h u(t,\bx),\qq\forall (t, \bx)\in\L.
$$
\ede

We emphasize that, by \rf{pauequal}, $\pa_{\bx\bx}^2u(t,\bx)$ is well defined (or say independent of the choice of $\h u$) only on $(\eta, \eta)$, rather than on general $(\eta, \eta')$. However, this is sufficient for our purpose.

\ms

Define, for $\bx\in \dbX,\, \eta \in \dbX_t$, $\f: \dbT_-^2 \times \dbX \to \dbR^k$ with appropriate dimension $k$,
\bel{oplus}
(\bx\oplus_t\eta)(s):=\bx_s{\bf1}_{[0,t)}(s)+\eta_s{\bf1}_{[t,T]}(s),~ s\in \dbT;\qq \f^{t, \bx}_s := \f(s, t, \bx),~ s\in [t, T].\ee
The main result of \cite{Viens-Zhang-2019} is the following functional It\^{o} formula.

\begin{proposition}\label{Ito} \sl
 Suppose  {\rm Assumption \ref{assum-FSVIE}} holds. Let $X$ be the solution to FSVIE \rf{FSVIE}, $\wt X$ be the auxiliary process defined by \rf{wtX}, and $u\in C_+^{1,2}(\L)$. Then, viewing $\wt X_t(\omega) \in \dbX_t$,
\bel{Ito-formula}
\left.\ba{c}
\ds du(t,\h X^t)=\Big[\pa_tu(t,\h X^t)
+{1\over2}\big\lan \pa^2_{\bx\bx}u(t,\h X^t),\,(\si^{t,X},\si^{t,X})\big\ran +\big\lan\pa_\bx u(t,\h X^t),b^{t,X}\big\ran\Big] dt\\
\ns\ds +\big\lan\pa_\bx u(t,\h X^t),\si^{t,X}\big\ran dW_t,\qq\mbox{where}\q  \h X^t := X\1n\oplus_t\wt X_t.
\ea\right.
\ee
\end{proposition}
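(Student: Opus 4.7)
The result is attributed to Viens--Zhang \cite{Viens-Zhang-2019}, so the plan is to adapt their Riemann-sum derivation to the present setting, verifying that Definition \ref{hatC12} and Assumption \ref{assum-FSVIE} suffice. The conceptual idea is that although $X$ itself is not a semimartingale, for each fixed $s$ the process $t\mapsto\wt X^s_t$ is; the object $\h X^t = X\oplus_t\wt X_t$ glues the $\cF_t$-measurable frozen past $X|_{[0,t)}$ to the $\cF_t$-measurable path $s\mapsto\wt X^s_t$ on $[t,T]$, on which the Fr\'echet derivatives $\pa_\bx u, \pa^2_{\bx\bx}u$ of Definition \ref{hatC12} act meaningfully. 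Evolving $t\mapsto u(t,\h X^t)$ then reduces to a time derivative plus a path-space Taylor expansion in the $[t,T]$ direction, exactly as in the semimartingale case but in infinite dimensions.

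I would fix a partition $\pi:0=t_0<\cdots<t_N=T$ with mesh $|\pi|\to 0$ and telescope
\beaa
u(T,\h X^T)-u(0,\h X^0) = \sum_i\big[u(t_{i+1},\h X^{t_{i+1}})-u(t_i,\h X^{t_{i+1}})\big] + \sum_i\big[u(t_i,\h X^{t_{i+1}})-u(t_i,\h X^{t_i})\big].
\eeaa
The first sum converges to $\int_0^T\pa_t u(t,\h X^t)\,dt$ by the right-continuity in Definition \ref{hatC12}(ii). For the second, $\h X^{t_{i+1}}$ and $\h X^{t_i}$ coincide on $[0,t_i)$, so their difference is a perturbation $\eta^i\in\dbX_{t_i}$ supported in $[t_i,T]$; by the flow property \rf{flowX2} together with the identity $X_s=\wt X^s_s$ on the diagonal,
\beaa
\eta^i_s = \int_{t_i}^{s\wedge t_{i+1}}\! b(s,r,X_\cd)\,dr + \int_{t_i}^{s\wedge t_{i+1}}\! \si(s,r,X_\cd)\,dW_r,\qq s\in[t_i,T].
\eeaa
A second-order Taylor expansion of $u(t_i,\cd)$ at $\h X^{t_i}$, justified by Definition \ref{hatC12}(iii), produces a first-order term in which Assumption \ref{assum-FSVIE}(iii) lets me replace $b(s,r,X_\cd)$ and $\si(s,r,X_\cd)$ by $b^{t_i,X}_r$ and $\si^{t_i,X}_r$ uniformly in $s\in[t_i,T]$ with $O(|\pi|)$ error, and a second-order term contracting $\pa^2_{\bx\bx}u(t_i,\h X^{t_i})$ against $(\si^{t_i,X}_{[t_i,T]},\si^{t_i,X}_{[t_i,T]})$ with weight $(t_{i+1}-t_i)$. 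Summing over $i$ and passing to the limit using BDG together with the polynomial growth in Definition \ref{hatC12}(i) yields the displayed formula.

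The main obstacle I expect is the ``diagonal'' portion of $\eta^i$ on $[t_i,t_{i+1})$, where $\h X^{t_{i+1}}$ equals $X$ rather than a cleanly $\cF_{t_{i+1}}$-measurable semimartingale increment. This piece has uniform norm $O(\sqrt{|\pi|})$; its contribution to the first-order Taylor term is safely $O(|\pi|^{3/2})$ per subinterval, but in the quadratic form it is a priori only $O(|\pi|)$ and hence potentially $O(1)$ once summed. The resolution, borrowed from \cite{Viens-Zhang-2019}, is that whenever $\pa^2_{\bx\bx}u(t_i,\h X^{t_i})$ is contracted with the diagonal factor, the other factor is either also diagonal (a product of two $O(\sqrt{|\pi|})$ quantities on a set of length $|\pi|$, summing to zero) or is the main $[t_{i+1},T]$ part (in which case BDG yields the required decay in expectation); the modulus-of-continuity estimate in Definition \ref{hatC12}(iii) then permits shifting the base point $\h X^{t_i}$ to absorb the remaining error. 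All other convergences are standard.
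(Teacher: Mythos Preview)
The paper does not prove this proposition; it simply records it as the main result of Viens--Zhang \cite{Viens-Zhang-2019}. Your outline is faithful to that source: telescoping, a time-derivative sum, and a second-order Taylor expansion in the path direction along the perturbation $\eta^i$ you write down. The identification of the ``diagonal'' portion on $[t_i,t_{i+1})$ as the main obstacle is also correct.

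Where your sketch has a genuine gap is in the rate you assign to the diagonal contribution to the \emph{first}-order term. You assert it is ``safely $O(|\pi|^{3/2})$ per subinterval'', but this does not follow from Definition \ref{hatC12}: that definition only guarantees $|\langle\pa_\bx u(t_i,\h X^{t_i}),\,\cdot\,\rangle|\les C(1+\|\h X^{t_i}\|^\kappa)\|\cdot\|$, so a piece of uniform norm $O(\sqrt{|\pi|})$ contributes $O(\sqrt{|\pi|})$ per subinterval, summing to $O(|\pi|^{-1/2})$, not $o(1)$. Nothing in Definition \ref{hatC12} forces the measure representing $\pa_\bx u(t_i,\h X^{t_i})$ to put mass $O(|\pi|)$ on $[t_i,t_{i+1})$. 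The actual mechanism in \cite{Viens-Zhang-2019} is different: one writes $\langle\pa_\bx u(t_i,\h X^{t_i}),\eta^i\rangle$ via stochastic Fubini as $\int_{t_i}^{t_{i+1}}\langle\pa_\bx u(t_i,\h X^{t_i}),\,\si(\cdot,r,X)\mathbf 1_{[r,T]}\rangle\,dW_r$ plus a drift, so the diagonal truncation $s\wedge t_{i+1}$ is absorbed into the lower limit of the outer $dW_r$-integral and never appears as a small-support perturbation fed to the functional. After Fubini the integrand is $\cF_r$-adapted and converges to $\langle\pa_\bx u(r,\h X^r),\si^{r,X}\rangle$ by the right-continuity in Definition \ref{hatC12}(ii), which is what drives the limit.

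The same issue recurs in your second-order argument. The claim that the ``diagonal $\times$ diagonal'' piece is negligible because it is ``a product of two $O(\sqrt{|\pi|})$ quantities on a set of length $|\pi|$'' again presupposes that the bilinear functional $\pa^2_{\bx\bx}u(t_i,\h X^{t_i})$ assigns weight $O(|\pi|)$ to the small square $[t_i,t_{i+1})^2$, which is not part of Definition \ref{hatC12}. The resolution in \cite{Viens-Zhang-2019} again uses the martingale structure (conditional independence of the Brownian increments and BDG for the resulting discrete martingale) rather than a pointwise size bound; your sketch gestures at this for the cross terms but not for the purely diagonal one.
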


\subsection{FBSVIEs with random coefficients}
For later purpose, we shall consider a more general FSVIE with random coefficients:
\bel{FSVIE2}
\check X_t = \bx_t + \int_0^t \check b(t,r, \omega, \check X_\cd) dr + \int_0^t \check\si(t,r, \omega, \check X_\cd) dW_r,\q t\in\dbT.\ee

\begin{assumption}
\label{assum-FSVIE2}\rm Let  $(\check b, \check \si): \dbT^2_- \times\Omega\times \dbX \to \dbR^{n} \times \dbR^{n \times d}$ be progressively measurable satisfying:
\begin{enumerate}[(i)]

\item The map $\bx\mapsto(\check b(t,r,\omega,\bx),\check\si(t,r,\omega,\bx))$ is uniformly Lipschitz continuous under the norm $\|\cd\|$.

\item{} The map $t\mapsto(\check b(t,r,\omega,\bx),\check\si(t,r,\omega,\bx))$ is differentiable with $(\pa_t b, \pa_t\si)$ also satisfing (i), and
\bel{Ip1}
I^p_p:= \sup_{(t, r) \in \dbT^2_-} \dbE\Big[|\check b(t,r, \cd, \b0)|^p +|\check \si(t,r, \cd, \b0)|^p+|\pa_t \check b(t,r, \cd, \b0)|^p +|\pa_t \check \si(t,r, \cd, \b0)|^p \Big]<\infty.
\ee
\end{enumerate}
\end{assumption}

We have the following result, also due to \cite{Ruan-Zhang-2020}.

\begin{proposition}
\label{prop-FSVIE2} \sl
Under {\rm Assumption \ref{assum-FSVIE2}}, FSVIE \rf{FSVIE2} admits a unique strong solution $\check X$ such that  $\check X$ is continuous in $t$ and the following estimate holds true:
\bea
\label{FSVIEest2}
\dbE[\|\check X\|^p] \les C_p \big[\|\eta\|^p + I_p^p\big],\q\mbox{where $I_p^p$ is defined in \rf{Ip1}}.
\eea
\end{proposition}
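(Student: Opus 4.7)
The plan is a direct adaptation of the argument for Proposition \ref{prop-fSVIE} to the random coefficient setting, carried out in three stages: (a) existence, uniqueness, and a pointwise-in-$t$ estimate; (b) an $L^p$ increment estimate for $\check X_t-\check X_{t'}$; (c) promotion to pathwise continuity and the sup-norm bound \rf{FSVIEest2}.

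For stage (a), I would run a standard Picard iteration in $\dbL^p_\dbF(0,T;\dbR^n)$ endowed with an exponentially weighted norm $\|\f\|_\l^p:=\sup_{t\in\dbT} e^{-\l t}\dbE|\f_t|^p$. Setting $\check X^{(0)}\equiv\bx$ and $\check X^{(n+1)}_t:=\bx_t+\int_0^t\check b(t,r,\cd,\check X^{(n)}_\cd)dr+\int_0^t\check\si(t,r,\cd,\check X^{(n)}_\cd)dW_r$, the uniform Lipschitz continuity in $\bx$ (Assumption \ref{assum-FSVIE2}(i)), together with BDG and the bound $I_p<\infty$, produces a contraction for $\l$ large enough, yielding the unique strong solution. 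A Gronwall argument then gives $\sup_{t\in\dbT}\dbE|\check X_t|^p\les C_p[\|\bx\|^p+I_p^p]$.

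For stage (b), as in Proposition \ref{prop-fSVIE} I would introduce the two-parameter auxiliary process $\wt{\check X}^s_t:=\bx_s+\int_0^t\check b(s,r,\cd,\check X_\cd)dr+\int_0^t\check\si(s,r,\cd,\check X_\cd)dW_r$ for $(t,s)\in\dbT^2_+$, so that $\wt{\check X}^t_t=\check X_t$. For $0\les t'<t\les T$, decompose
\begin{align*}
\check X_t-\check X_{t'}=\big(\wt{\check X}^t_t-\wt{\check X}^t_{t'}\big)+\big(\wt{\check X}^t_{t'}-\wt{\check X}^{t'}_{t'}\big).
\end{align*}
The first difference is a genuine It\^o increment over $[t',t]$ with integrands controlled by $\|\check X\|$ (via Lipschitz in $\bx$), the stage (a) bound, and \rf{Ip1}; BDG delivers $\dbE|\wt{\check X}^t_t-\wt{\check X}^t_{t'}|^p\les C_p(t-t')^{p/2}[1+\|\bx\|^p+I_p^p]$. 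For the second difference, the crucial device is to invoke the $t$-differentiability of $(\check b,\check\si)$ (Assumption \ref{assum-FSVIE2}(ii)) and write $\check b(t,r,\cd,\check X)-\check b(t',r,\cd,\check X)=\int_{t'}^t\pa_s\check b(s,r,\cd,\check X)ds$, and similarly for $\check\si$. After applying stochastic Fubini, the second difference becomes
\begin{align*}
\wt{\check X}^t_{t'}-\wt{\check X}^{t'}_{t'}=(\bx_t-\bx_{t'})+\int_{t'}^t\Big[\int_0^{t'}\pa_s\check b(s,r,\cd,\check X)dr+\int_0^{t'}\pa_s\check\si(s,r,\cd,\check X)dW_r\Big]ds,
\end{align*}
a Bochner-type integral in $s$ of standard It\^o stochastic integrals. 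Minkowski and BDG, together with the Lipschitz-in-$\bx$ property of $(\pa_t\check b,\pa_t\check\si)$ inherited from Assumption \ref{assum-FSVIE2}(ii) and the $L^p$ bound \rf{Ip1}, then yield $\dbE|\wt{\check X}^t_{t'}-\wt{\check X}^{t'}_{t'}|^p\les C_p|\bx_t-\bx_{t'}|^p+C_p(t-t')^p[1+\|\bx\|^p+I_p^p]$.

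For stage (c), combining the two estimates with the uniform continuity of $\bx\in\dbX$ and choosing $p$ large enough that the Kolmogorov exponent $p/2$ exceeds $1$, the Kolmogorov continuity criterion yields a continuous modification of $\check X$ and, simultaneously, the sup-norm bound \rf{FSVIEest2}. The main obstacle is precisely the stochastic Fubini step in stage (b): without $t$-differentiability of $(\check b,\check\si)$, the Volterra-type stochastic increment $\int_0^{t'}[\check\si(t,r,\cd,\check X)-\check\si(t',r,\cd,\check X)]dW_r$ is not amenable to a direct BDG bound, and the $ds$-representation followed by stochastic Fubini is the key device that reduces this Volterra increment to a pathwise $ds$-integral of standard It\^o integrals.
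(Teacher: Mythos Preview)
The paper does not give a proof of this proposition; it simply attributes the result to Ruan--Zhang \cite{Ruan-Zhang-2020}. Your stages (b)--(c) correctly isolate the essential device: rewriting $\check\si(t,r,\cd,\check X)-\check\si(t',r,\cd,\check X)=\int_{t'}^t\pa_s\check\si(s,r,\cd,\check X)\,ds$ and applying stochastic Fubini is exactly what Assumption \ref{assum-FSVIE2}(ii) is designed for, and is what makes a Kolmogorov-type increment bound possible for the Volterra stochastic integral.

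There is, however, a structural gap in the way you decouple the three stages. In stage (a) you run the Picard iteration in the \emph{pointwise} norm $\sup_{t}e^{-\l t}\dbE|\f_t|^p$. But the coefficients are Lipschitz in $\bx$ under the uniform norm $\|\cdot\|$ (and, by progressive measurability, $\check b(t,r,\omega,\bx)$ depends on $\bx_{[0,r]}$), so the contraction step produces
\[
\dbE\big|\check X^{(n+1)}_t-\check X^{(n)}_t\big|^p
\les C_p\int_0^t \dbE\Big[\sup_{l\les r}\big|\check X^{(n)}_l-\check X^{(n-1)}_l\big|^p\Big]\,dr,
\]
i.e.\ the \emph{expectation of the sup}, which is not controlled by $\sup_l\dbE|\cdot|^p$. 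The iteration does not close in your norm. The same issue reappears in stage (b): both the bound on $\wt{\check X}^t_t-\wt{\check X}^t_{t'}$ and the bound on the $ds$-integrand $\int_0^{t'}\pa_s\check\si(s,r,\cd,\check X)\,dW_r$ require control of $\dbE\|\check X_{[0,t']}\|^p$, which is precisely the conclusion of stage (c). As written, stages (a)--(c) are circular.

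The remedy, which is presumably how \cite{Ruan-Zhang-2020} proceeds, is to fold the $\pa_t$-representation into the fixed-point argument itself rather than postponing it. Concretely: run the Picard iteration (or the a priori Gr\"onwall bound) directly in the sup-norm $\dbE\|\cdot\|^p$, and at each step use the identity
\[
\check X^{(n+1)}_t-\bx_t=\int_0^t\Big[\check b(r,r,\cd,\check X^{(n)})+\int_r^t\pa_s\check b(s,r,\cd,\check X^{(n)})\,ds\Big]dr
+\int_0^t\Big[\cdots\Big]dW_r
\]
followed by stochastic Fubini, so that the sup over $t$ is taken over a genuine semimartingale in $t$ and BDG applies. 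This yields a contraction (or a closed Gr\"onwall inequality for $t\mapsto\dbE\|\check X_{[0,t]}\|^p$) in one pass. In short, Assumption \ref{assum-FSVIE2}(ii) must enter \emph{before}, not after, the basic well-posedness in the path-dependent setting; this is exactly the point of the paper's remark following Proposition \ref{prop-fSVIE}.
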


Similarly, we consider a more general BSVIE with random coefficients:
\bea
\label{BSVIE2}
\check Y_t = \check g(t,\omega) + \int_t^T\check  f(t,r, \omega,  \check Y_r, \check Z^t_r) dr - \int_t^T \check Z^t_r dW_r,\q t\in \dbT.
\eea

\begin{assumption}
\label{assum-BSVIE2}\rm The map
$\check f:\dbT^2_+\times\Omega\times\dbR^m\times\dbR^{m\times d}\to\dbR^m$ is progressively measurable and the map $\check g:\dbT\times\Omega\to\dbR^m$ is $\cF_T$-measurable satisfying:
\bel{Ip2}
\check I_p^p:=\sup_{t\in\dbT}\dbE\Big[\Big(\int_t^T|\check f(t,r,\cd,0,0)|dr\Big)^p +|\check g(t,\cd)|^p\Big]<\infty,\ee
and the map $(y,z)\mapsto\check f(t,r,\omega,y,z)$ is uniformly Lipschitz continuous.
\end{assumption}
From Shi--Wang--Yong \cite{Shi-Wang-Yong-2015}, we have the following standard result.

\begin{proposition}
\label{prop-BSVIE2} \sl
Under  {\rm Assumption \ref{assum-BSVIE2}}, BSVIE \rf{BSVIE2} admits a unique strong solution $(\check Y, \check Z)$ such that
the following estimate holds true:
\bel{BSVIE2est}
 \sup_{0\les t\les T}\dbE\Big[ |\check Y_t|^p+ \Big(\int_t^T |\check Z^t_s|^2ds\Big)^{{p\over2}}\Big]
\les C_p\check I_p^p,\q\mbox{where $\check I_p^p$ is defined in \rf{Ip2}}.
\ee
\end{proposition}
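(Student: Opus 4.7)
The plan is to treat BSVIE \rf{BSVIE2} as a fixed-point problem for a parametric family of standard BSDEs, which is the standard route for type-I BSVIEs and the approach underlying \cite{Shi-Wang-Yong-2015}. I would work in the Banach space $\cS^p:=\{Y\in\dbL^p_\dbF(0,T;\dbR^m):\|Y\|_p^p:=\sup_{t\in\dbT}\dbE|Y_t|^p<\infty\}$, and for each candidate $Y\in\cS^p$ and each fixed $t\in\dbT$ solve the classical BSDE over $[t,T]$
$$\bar Y^t_s=\check g(t)+\int_s^T\check f(t,r,\omega,Y_r,\bar Z^t_r)dr-\int_s^T\bar Z^t_r dW_r,\qq s\in[t,T].$$
By the classical Pardoux--Peng theory this admits a unique $L^p$-solution $(\bar Y^t,\bar Z^t)$, since $\check f(t,r,\cd,y,z)$ is Lipschitz in $(y,z)$ and the bound $|\check f(t,r,Y_r,0)|\les|\check f(t,r,0,0)|+L|Y_r|$, together with \rf{Ip2}, puts the terminal value and driver in $L^p$. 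Define $\Phi:\cS^p\to\cS^p$ by $\Phi(Y)_t:=\bar Y^t_t$; any fixed point of $\Phi$, together with $\check Z^t_r:=\bar Z^t_r$, is a solution of \rf{BSVIE2}.

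Next, the standard $L^p$-BSDE estimate applied to $(\bar Y^t,\bar Z^t)$, combined with H\"older's inequality, yields
$$\dbE|\bar Y^t_t|^p+\dbE\Big(\int_t^T|\bar Z^t_r|^2dr\Big)^{p/2}\les C_p\dbE\Big[|\check g(t)|^p+\Big(\int_t^T|\check f(t,r,0,0)|dr\Big)^p\Big]+C_{p,T}\int_t^T\dbE|Y_r|^pdr,$$
so that $\|\Phi(Y)\|_p^p\les C_p\check I_p^p+C_{p,T}T\|Y\|_p^p$, and the same estimate applied to the difference $\Phi(Y)-\Phi(Y')$ gives $\|\Phi(Y)-\Phi(Y')\|_p^p\les C_{p,T}T\|Y-Y'\|_p^p$. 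To circumvent the smallness-in-$T$ requirement I would partition $\dbT$ into subintervals $[T_{k+1},T_k]$ of length $\d$ with $C_{p,T}\d<1$ and iterate backward: on $[T_1,T]$ the map $\Phi$ restricts to a contraction on $\cS^p([T_1,T])$, producing $(\check Y_t,\check Z^t)_{t\in[T_1,T]}$; on the next interval the already-solved contributions $\int_{T_1}^T\check f(t,r,\check Y_r,\check Z^t_r)dr$ for $t\in[T_2,T_1]$ enter as effective (random) terminal data, reducing that stage to another short-interval contraction, and so on. Concatenating the fixed points gives the unique global solution, and feeding the bound $\|\check Y\|_p^p\les C_p\check I_p^p$ back into the a priori estimate yields \rf{BSVIE2est} uniformly in $t$.

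The main obstacle is the careful bookkeeping of joint measurability of the parametric solutions: one needs $(t,s,\omega)\mapsto\bar Z^t_s(\omega)$ to be product-measurable so that $\check Z$ defines a genuine element of the underlying $\dbL^2$-type space, and the pasting across subintervals must preserve adaptedness and the correct $L^p$-integrability. These technical points are precisely the content of the proof in \cite{Shi-Wang-Yong-2015}, which chooses product-measurable versions of the BSDE solutions from the outset; with those versions in hand the remainder is a routine BSDE-type computation.
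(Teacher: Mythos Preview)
Your proposal is correct and aligns with the paper's own treatment: the paper does not give an independent proof of Proposition~\ref{prop-BSVIE2} but simply cites \cite{Shi-Wang-Yong-2015} as a standard result, and your sketch is an accurate outline of the fixed-point argument carried out there. One small imprecision: in the backward iteration step for $t\in[T_2,T_1]$, the quantity $\check Z^t_r$ for $r\in[T_1,T]$ is \emph{not} predetermined (only $\check Y_r$ for $r\in[T_1,T]$ is known), so it is not literally ``terminal data''; rather, you still solve the full BSDE on $[t,T]$ but the contraction bound $\int_t^T\dbE|Y_r-Y'_r|^p\,dr$ collapses to $\int_t^{T_1}$ since the inputs agree on $[T_1,T]$, which is what gives the factor $\d$.
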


Another important property of BSVIEs is the following comparison principle, due to Wang--Yong \cite{Wang-Yong-2015}. For $y,\tilde y\in\dbR^m$, we say $y\les\tilde y$ if $y_i\les\tilde y_i$, $i=1,\cds,m$.

\begin{proposition}\label{prop-comparison} \sl
For $k=1,2$, let $\check f^k, \check g^k$ satisfy {\rm Assumption \ref{assum-BSVIE2}} and $( \check Y^k, \check Z^k)$ be the solutions to the corresponding BSVIE \rf{BSVIE2}.
Assume $\check f^1 \les \check f^2$ and $\check g^1 \les \check g^2$. Assume further that, either for $k=1$ or $k=2$, $\check f^k$ is nondecreasing in $y$ (in the componentwise sense), and $\check f^k_i$ does not depend on $z_j$, for $i\neq j$, where $z_j\in \dbR^d$ is the $j$-th row of $z\in \dbR^{m\times d}$.  Then we have $\check Y^1_t \les \check Y^2_t$, $0\les t\les T$, a.s.
\end{proposition}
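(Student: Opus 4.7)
The plan is to establish the comparison by a Picard iteration for the BSVIE that preserves ordering at every step. The key observation is that, when the $y$-argument in the driver is frozen at a previous iterate, each step of the iteration becomes a family of standard BSDEs (one for each parameter $t$), to which the multi-dimensional BSDE comparison principle can be applied under the diagonal $z$-structure assumption.

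\textbf{Step 1 (Iteration).} For $k\in\{1,2\}$, set $\check Y^{k,0}\equiv 0$ and, having defined $\check Y^{k,n}$, solve for each fixed $t\in\dbT$ the standard BSDE on $[t,T]$
\[
\wt Y^{k,n+1,t}_s = \check g^k(t)+\int_s^T \check f^k\big(t,r,\check Y^{k,n}_r,\wt Z^{k,n+1,t}_r\big)dr - \int_s^T \wt Z^{k,n+1,t}_r dW_r,
\]
and set $\check Y^{k,n+1}_t := \wt Y^{k,n+1,t}_t$, $\check Z^{k,n+1,t}_r := \wt Z^{k,n+1,t}_r$. By the standard contraction argument behind Proposition \ref{prop-BSVIE2} (see \cite{Shi-Wang-Yong-2015}), $(\check Y^{k,n},\check Z^{k,n})\to(\check Y^k,\check Z^k)$ in the appropriate norm; passing to a subsequence we may assume $\check Y^{k,n}_t\to\check Y^k_t$ a.s.\ for a.e.\ $t$.

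\textbf{Step 2 (Induction on $n$).} I would prove by induction that $\check Y^{1,n}_r\les\check Y^{2,n}_r$ componentwise, a.s., for a.e.\ $r$, the base case $n=0$ being trivial. For the inductive step, WLOG suppose $\check f^2$ is the driver with the monotonicity and diagonal $z$-structure. Fix $t$ and write the frozen drivers
\[
F^k(r,z):=\check f^k\big(t,r,\check Y^{k,n}_r,z\big),\qquad k=1,2.
\]
The hypothesis together with $\check f^1\les\check f^2$ and the componentwise monotonicity of $\check f^2$ in $y$ gives
\[
F^1(r,z)\les \check f^2\big(t,r,\check Y^{1,n}_r,z\big)\les \check f^2\big(t,r,\check Y^{2,n}_r,z\big)=F^2(r,z).
\]
Combined with $\check g^1(t)\les\check g^2(t)$, the multi-dimensional BSDE comparison principle applied to $\wt Y^{1,n+1,t}$ and $\wt Y^{2,n+1,t}$ yields $\wt Y^{1,n+1,t}_s\les\wt Y^{2,n+1,t}_s$ on $[t,T]$, and in particular $\check Y^{1,n+1}_t\les\check Y^{2,n+1}_t$. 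Passing $n\to\infty$ along the convergent subsequence concludes the proof.

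\textbf{Main obstacle.} The delicate point is justifying the multi-dimensional BSDE comparison in Step 2 when the diagonal $z$-condition is imposed on only one of the two drivers. I would handle this componentwise by inserting an intermediate scalar BSDE
\[
\bar Y^i_s = \check g^{1,i}(t)+\int_s^T F^2_i(r,\bar Z^i_r)\,dr-\int_s^T \bar Z^i_r\,dW_r,\qquad s\in[t,T],
\]
which is well-posed because the diagonal structure makes $F^2_i$ depend only on the $i$-th row of $z$. Scalar comparison (same driver, $\check g^{1,i}(t)\les\check g^{2,i}(t)$) gives $\bar Y^i\les \wt Y^{2,n+1,t,i}$, while scalar comparison together with a Girsanov linearization in the $z$-variable (using $F^1_i(r,z)\les F^2_i(r,z^i)$, which holds because $F^2_i$ is independent of the off-diagonal components of $z$) gives $\wt Y^{1,n+1,t,i}\les \bar Y^i$. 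Chaining the two inequalities produces the required ordering of the $i$-th components, and doing this for every $i$ completes the multi-dimensional comparison at each iteration step.
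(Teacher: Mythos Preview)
The paper does not actually prove this proposition; it is stated as a known result and attributed to Wang--Yong \cite{Wang-Yong-2015}. Your Picard-iteration approach, freezing the $y$-argument to reduce each step to a family of parametrized BSDEs and then passing to the limit, is correct and is in fact the standard strategy behind such BSVIE comparison results, so you are essentially reconstructing the argument of the cited reference.

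Your handling of the ``main obstacle'' is the heart of the matter and is done correctly: the diagonal $z$-structure on (say) $\check f^2$ makes the $i$-th component of the $k=2$ frozen BSDE a genuine scalar equation, and for the $k=1$ side the off-diagonal rows of $\wt Z^{1,n+1,t}$ can be treated as frozen random coefficients, after which the inequality $F^1_i(r,z)\les F^2_i(r,z)=F^2_i(r,z_i)$ lets ordinary scalar comparison conclude. One cosmetic remark: what you call ``Girsanov linearization'' is just the standard scalar BSDE comparison with random Lipschitz drivers; no extra machinery is needed. Also, strictly speaking your limit gives the ordering for each fixed $t$ a.s.\ (interpretation (a) of the conclusion), which is the natural reading here since Assumption~\ref{assum-BSVIE2} does not guarantee a continuous version of $\check Y$.
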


\section{The Path Dependent Feynman--Kac Formula}\label{sect-PPDE}
\setcounter{equation}{0}

In this section, we are going to establish the relations between PPDEs and FBSVIEs.

\subsection{From PPDE to FBSVIE}

Recall the FSVIEs \rf{FSVIE}--\rf{wtX} and BSVIEs \rf{BSVIE}--\rf{wtYZ}.
Recall \rf{PPDE1}, let us introduce the following system of PPDEs:
%with terminal condition $U(t,T,\bx)=g(t,\bx)$, $(t,\bx)\in[0,T]\times\dbX$:
%
\bel{UPPDE}\left\{\1n\ba{lll}
\ds\cL U(t,s,\bx):=\pa_sU(t,s,\bx)+{1\over2}\lan\pa^2_{\bx\bx}U(t,s,\bx),(\si^{s,\bx}, \si^{s,\bx})\ran+\lan\pa_\bx U(t,s,\bx),b^{s,\bx}\ran\\
\ns\ds\qq\qq\qq+f\big(t,s,\bx,U(s,s,\bx),\lan\pa_\bx U(t,s,\bx),\si^{s,\bx}\ran\big)=0,
\q(t,s,\bx)\in\dbT^2_+\times\dbX,\\
\ns\ds U(t,T,\bx)=g(t,\bx),\qq (t,\bx)\in[0,T]\times\dbX,
\ea\right.\ee
where, for $\f=b,\si$, $\f^{s,\bx}$ is defined by \rf{oplus}. As we see in \rf{Ito-formula}, $\bx$ will correspond to $X\oplus_s\wt X_s$, rather than $X$.
However, due to the required adaptiveness, one has
$$\f^{s, X\oplus_s \wt X_s}_r = \f(r, s, X\oplus_s \wt X_s) = \f(r, s, X) = \f^{s,X}_r,~~ r\in [s, T],\q \hbox{for}~ \f=b,\si.$$
We emphasize that the derivatives in $\cL U(t,s,\bx)$ are with respect to $(s,\bx)$ only. As mentioned in Introduction, since \rf{UPPDE} involves the diagonal value $U(s,s,\bx)$, it is non-local in the first time variable $t$. Alternatively, if we view $t$ as a parameter rather than an independent variable, then \rf{UPPDE} is an (uncountably) infinite dimensional PPDE system self-interacted through $U(s,s,\bx)$.

We call $U\in C^0(\dbT^2_+\times\dbX)$ a {\it classical solution} to the PPDE \rf{UPPDE}
if $U(t, \cd) \in C^{1,2}_+([t, T]\times \dbX)$ for all $t\in \dbT$,
where $C^{1,2}_+([t, T]\times \dbX)$ is defined in the spirit of Definitions \ref{hatC12} and \ref{C12}, but restrict to $s\in [t, T]$ only, and \rf{UPPDE} is satisfied in the classical sense.

\begin{theorem}
\label{thm-FK1} \sl
Under {\rm Assumptions \ref{assum-FSVIE}} and {\rm\ref{assum-BSVIE}},
if the PPDE \rf{UPPDE} has a classical solution $U$, then, for any $(t,s)\in \dbT^2_+$ and recalling the $\h X$ in \rf{Ito-formula},
\bel{YU}
\wt Y^t_s = U(t, s, \h X^s),\q Y_t = U(t, t, \h X^t),\q Z^t_s
= \wt Z^t_s=\lan\pa_\bx U(t, s, \h X^s),\,\si^{s,X}\ran.
\ee
\end{theorem}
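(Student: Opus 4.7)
The plan is to apply the functional It\^o formula (Proposition \ref{Ito}) to $s\mapsto U(t,s,\h X^s)$ on $[t,T]$ with $t$ held fixed, collapse the Dupire drift using PPDE \rf{UPPDE}, and then recognize the resulting backward equation as the BSVIE \rf{BSVIE}; uniqueness will then force \rf{YU}.

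First I would fix $t\in\dbT$ and note that $U(t,\cd,\cd)\in C^{1,2}_+([t,T]\times\dbX)$ by hypothesis. Because the proof of Proposition \ref{Ito} in \cite{Viens-Zhang-2019} is local in time, it applies on $[t,T]$; moreover, since $\h X^s$ agrees with $X$ on $[0,s]$ and $b,\si$ are progressively measurable, $b^{s,\h X^s}=b^{s,X}$ and $\si^{s,\h X^s}=\si^{s,X}$. Hence for $s\in[t,T]$,
$$
dU(t,s,\h X^s)=\big[\pa_sU+\tfrac12\lan\pa^2_{\bx\bx}U,(\si^{s,X},\si^{s,X})\ran+\lan\pa_\bx U,b^{s,X}\ran\big]ds+\lan\pa_\bx U,\si^{s,X}\ran dW_s,
$$
with $U$ and its derivatives evaluated at $(t,s,\h X^s)$. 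PPDE \rf{UPPDE} equates the bracket with $-f(t,s,\h X^s,U(s,s,\h X^s),\lan\pa_\bx U(t,s,\h X^s),\si^{s,X}\ran)$. Integrating from $s$ to $T$, using $\h X^T=X$ (hence $U(t,T,\h X^T)=g(t,X)$), and replacing $\h X^r$ by $X$ inside $f$ via progressive measurability, I obtain
\begin{align*}
U(t,s,\h X^s)&=g(t,X)+\int_s^Tf\big(t,r,X,U(r,r,\h X^r),\lan\pa_\bx U(t,r,\h X^r),\si^{r,X}\ran\big)dr\\
&\quad-\int_s^T\lan\pa_\bx U(t,r,\h X^r),\si^{r,X}\ran dW_r,\q s\in[t,T].
\end{align*}

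Next, setting $\bar Y_t:=U(t,t,\h X^t)$ and $\bar Z^t_r:=\lan\pa_\bx U(t,r,\h X^r),\si^{r,X}\ran$ and specializing the display above to $s=t$ turns it into exactly BSVIE \rf{BSVIE}:
$$
\bar Y_t=g(t,X)+\int_t^Tf(t,r,X,\bar Y_r,\bar Z^t_r)dr-\int_t^T\bar Z^t_rdW_r,\q t\in\dbT,
$$
so the uniqueness part of Proposition \ref{prop-BSVIE} forces $(\bar Y,\bar Z)=(Y,Z)$; in particular $Y_t=U(t,t,\h X^t)$ and $Z^t_s=\lan\pa_\bx U(t,s,\h X^s),\si^{s,X}\ran$. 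Substituting $U(r,r,\h X^r)=Y_r$ back into the earlier display and comparing with \rf{wtYZ} (on $r\ges s\ges t$ one has $t\land r=t$), standard BSDE uniqueness gives $\wt Y^t_s=U(t,s,\h X^s)$ and $\wt Z^t_s=\lan\pa_\bx U(t,s,\h X^s),\si^{s,X}\ran$; combined with \rf{BSDE-BSVIE-equal} this yields \rf{YU}.

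The only real subtlety is the local applicability of Proposition \ref{Ito} with $U(t,\cd,\cd)$ in place of $u$ and on the sub-interval $[t,T]$; this is a cosmetic rerun of the construction in \cite{Viens-Zhang-2019} once one observes that both the auxiliary process $\h X^s$ and the $C^{1,2}_+$-calculus are insensitive to the left endpoint. Beyond this point the proof is algebraic bookkeeping, the key leverage coming from the flow identity \rf{flowX2} built into $\h X^s$ and the progressive measurability that enables the identifications $\si^{s,\h X^s}=\si^{s,X}$ and $b^{s,\h X^s}=b^{s,X}$.
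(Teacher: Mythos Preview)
Your proposal is correct and follows essentially the same approach as the paper: apply the functional It\^o formula to $s\mapsto U(t,s,\h X^s)$, use PPDE \rf{UPPDE} to collapse the drift, integrate to recognize BSVIE \rf{BSVIE}, invoke uniqueness to identify $(Y,Z)$, and then substitute back to obtain the $\wt Y^t_s$ and $\wt Z^t_s$ formulas. You are slightly more explicit than the paper about the progressive measurability identifications $b^{s,\h X^s}=b^{s,X}$, $\si^{s,\h X^s}=\si^{s,X}$ and about invoking BSDE uniqueness for the $\wt Y^t_s$ step, but the argument is the same.
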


\begin{proof}
Fix $t$. Applying the  functional It\^{o} formula \rf{Ito-formula} to $U(t,\cd,\h X^\cd)$, we have
$$\ba{ll}
\ns\ds
dU(t,s,\h X^s)=\pa_s U(t,s,\h X^s)ds+\lan\pa_\bx U(t,s,\h X^s),b^{s,X}\ran ds\\
\ns\ds\qq\qq\qq+{1\over2}\lan \pa^2_{\bx\bx}U(t,s,\h X^s),\,(\si^{s,X},\si^{s,X})\ran ds+\lan\pa_\bx U(t,s,\h X^s),\si^{s, X}\ran dW_s.
\ea$$
Since $U$ satisfies the PPDE \rf{UPPDE}, the above implies that
\bel{proof-FK}
dU(t,s,\h X^s)
=-f\big(t, s, X, U(s,s, \h X^s), \lan\pa_\bx U(t,s,\h X^s),\si^{s,X}\ran\big) ds +\lan\pa_\bx U(t,s,\h X^s),\si^{s, X}\ran dW_s.
\ee
Note that $U(t,T,X)=g(t,T,X)$, integrating \rf{proof-FK} over $[t, T]$ we have:
\begin{align*}
U(t,t,\h X^t)
&=g(t,T,X)+\int_t^Tf\big(t, r, X, U(r,r, \h X^r), \lan\pa_\bx U(t,r,\h X^r),\si^{r,X}\ran\big) dr\\
& \q-\int_t^T\lan\pa_\bx U(t,r,\h X^r),\si^{r, X}\ran dW_r.
\end{align*}
That is, $(\h Y_t,\, \h Z^{t}_s):=(U(t,t,\h X^t),\,\lan\pa_\bx U(t,s,\h X^s),\si^{s,X}\ran)$ satisfies BSVIE \rf{BSVIE}.
Then, from the uniqueness of BSVIEs we obtain $\h Y=Y$ and $\h Z=Z$, hence the last two formulae in \rf{YU}.
Moreover, by substituting these into \rf{proof-FK}, we have
$$
dU(t,s,\h X^s) =-f(t, s, X, Y_s, Z^t_s) ds +Z^t_s dW_s.
$$
This clearly implies the first formula in \rf{YU}.
\end{proof}

\subsection{From FBSVIE to PPDE }
In this subsection we proceed with the opposite direction: constructing $U(t,s, \bx)$ by using FBSVIEs.
We emphasize again that the $\bx$ here corresponds to $X\oplus_s \wt X_s$.

First, for any $(s,\bx)\in \L$, denote $X^{s,\bx}_l:=\bx_l$, $l\in[0,s]$, and consider the following  FBSVIE:
\bel{FBSDEth}\left.\1n\ba{ll}
\ds X^{s,\bx}_l=\bx_l+\int_s^l  b(l ,r,X_\cd^{s,\bx})dr+\int_s^l \si(l ,r, X_\cd^{s,\bx})dW_r;\\
\ns\ds Y^{s,\bx}_l =g(l ,X_\cd^{s,\bx})+\int_l ^Tf(l ,r,X_\cd^{s,\bx},Y^{s,\bx}_r,
Z^{l ,s,\bx}_r)dr-\int_l ^TZ^{l ,s,\bx}_rdW_r,\ea\right.\q l \in[s,T].
\ee
Next, given $(t, s, \bx)\in \dbT^2_+\times \dbX$, consider the following standard FBSDE:
\bel{BSDEtsth}\left.\1n\ba{ll}
\ds\wt X^{s,\bx}_{r,l }:=\bx_l +\int_s^rb(l ,r',X_\cd^{s,\bx})dr'+\int_s^r\si
(l ,r',X_\cd^{s,\bx})dW_{r'},\q s\les r\les l \les T;\\
\ns\ds\wt Y^{t,s,\bx}_l =g(t,X_\cd^{s,\bx})+\int_l ^Tf(t,r,X_\cd^{s,\bx},Y^{s,\bx}_r,
\wt Z^{t,s,\bx}_r)dr-\int_l ^T\wt Z^{t,s,\bx}_rdW_r,\q l \in[s,T].\ea\right.\ee
We now define
\bel{Uts}U(t,s,\bx):=\wt Y^{t,s,\bx}_s,\q(t,s,\bx)\in\dbT^2_+\times\dbX.\ee
It is obvious that $\wt Y^{t,s,\bx}_l $ is $\si(W_r-W_s,r\in[s,l ])$-measurable,
so the above $U(t,s,\bx)$ is deterministic.

\begin{theorem}
\label{thm-FK2} \sl
Under {\rm Assumptions \ref{assum-FSVIE}} and {\rm\ref{assum-BSVIE}},
if the function $U$ defined by \rf{Uts} is continuous in all variables and,
for any fixed $t$, $\pa_\bx U(t,\cd), \pa^2_{\bx\bx} U(t,\cd)$ exist and satisfy the requirements in {\rm Definition \ref{hatC12}} (in the sense of {\rm Definition \ref{C12}}),
then $U$ is a classical solution of the PPDE \rf{UPPDE}.
\end{theorem}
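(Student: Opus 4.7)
The plan is to verify \rf{UPPDE} for $U$ defined in \rf{Uts} by combining the functional It\^o formula (Proposition \ref{Ito}) with a flow identity for $U$. The terminal condition $U(t,T,\bx)=g(t,\bx)$ is immediate from \rf{BSDEtsth} at $l=T$, so we focus on the dynamic equation.

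\textbf{Flow step.} For $(t,s,\bx)\in\dbT^2_+\times\dbX$ and $l\in[s,T]$, set
$$
\h X^l := X^{s,\bx}\oplus_l \wt X^{s,\bx}_{l,\cd},
$$
where $\wt X^{s,\bx}_{l,\cd}\in\dbX_l$ denotes the $\dbX_l$-valued random variable $r\mapsto\wt X^{s,\bx}_{l,r}$, $r\in[l,T]$, coming from \rf{BSDEtsth}. I claim that $\wt Y^{t,s,\bx}_l = U(t,l,\h X^l)$ a.s., and in particular (taking $t=l$) $Y^{s,\bx}_l = U(l,l,\h X^l)$. Indeed, by the forward flow \rf{flowX2} and FSVIE uniqueness, the solution $X^{l,\h X^l}$ of \rf{FBSDEth} started at $(l,\h X^l)$ coincides pathwise with $X^{s,\bx}$: they agree on $[0,l]$ by the definition of $\oplus_l$, and on $[l,T]$ both satisfy the same FSVIE. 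Progressive measurability of $f,g$ together with BSDE uniqueness then imply that the BSDE defining $U(t,l,\h X^l)$ on $[l,T]$ has the same terminal and driver as the BSDE \rf{BSDEtsth} for $\wt Y^{t,s,\bx}$ on $[l,T]$, which proves the claim.

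\textbf{It\^o matching and conclusion.} Fix $(t,s,\bx)$ with $t\les s$ and apply Proposition \ref{Ito} (on $[s,T]$ with $X^{s,\bx}$ in place of $X$) to $l\mapsto U(t,l,\h X^l)$; this is legitimate because $U(t,\cd)\in C^{1,2}_+([t,T]\times\dbX)$ by hypothesis. Comparing the resulting semimartingale decomposition with that of $\wt Y^{t,s,\bx}_l$ obtained from \rf{BSDEtsth}, the flow identity together with uniqueness of semimartingale decompositions yield $\wt Z^{t,s,\bx}_l=\lan\pa_\bx U(t,l,\h X^l),\si^{l,X^{s,\bx}}\ran$ and, after substituting $Y^{s,\bx}_l=U(l,l,\h X^l)$ and invoking progressive measurability to swap $X^{s,\bx}$ for $\h X^l$ inside $b,\si,f$ (which agree on $[0,l]$), the random identity
$$
\Phi(l):=\cL U(t,l,\h X^l)+f\big(t,l,\h X^l,U(l,l,\h X^l),\lan\pa_\bx U(t,l,\h X^l),\si^{l,\h X^l}\ran\big)=0
$$
holds for a.e.\ $l\in[s,T]$, almost surely. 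The regularity of $U$ and of the coefficients makes $\Phi$ right-continuous in $l$, so the identity extends to all $l\in[s,T)$ a.s. Since $\wt X^{s,\bx}_{s,\cd}=\bx|_{[s,T]}$ gives $\h X^s=\bx$ \emph{deterministically}, $\Phi(s)$ is a deterministic quantity and $\Phi(s)=0$ is precisely \rf{UPPDE} at $(t,s,\bx)$. The main technical obstacle is the flow step: matching the two BSDEs requires the simultaneous use of the FSVIE flow property, progressive measurability of all coefficients, and uniqueness of both the FSVIE and the BSDE, applied along a path $\h X^l$ that is itself built from the two-time-variable auxiliary process.
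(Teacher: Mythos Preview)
Your flow step is essentially the same as the paper's, and your overall strategy---apply It\^o to $l\mapsto U(t,l,\h X^l)$ and match with the BSDE---is natural. However, there is a genuine gap at the It\^o matching step.

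The hypothesis of the theorem assumes only that $\pa_\bx U(t,\cd)$ and $\pa^2_{\bx\bx} U(t,\cd)$ exist and satisfy the relevant parts of Definition~\ref{hatC12}; it does \emph{not} assume that $\pa_s U(t,s,\bx)$ exists. Your claim that ``$U(t,\cd)\in C^{1,2}_+([t,T]\times\dbX)$ by hypothesis'' is therefore not justified, and Proposition~\ref{Ito} cannot be applied directly to $l\mapsto U(t,l,\h X^l)$. In fact, part of what the theorem asserts is that $U$ is a classical solution, which by definition includes $U(t,\cd)\in C^{1,2}_+$; so the existence of $\pa_s U$ is part of the \emph{conclusion}, not the hypothesis.

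The paper circumvents this by a discretization: on a partition $s=s_0<\cds<s_n=T$, one applies the functional It\^o formula to $r\mapsto U(t,s_{i+1},\h X^{s,\bx,r})$ on each $[s_i,s_{i+1}]$, with the time argument \emph{frozen} at $s_{i+1}$. This requires only the path derivatives, which are assumed. Comparing with the BSDE for $\wt Y^{t,s,\bx}$ on $[s_i,s_{i+1}]$ and sending the mesh to zero yields the representation $\wt Z^{t,s,\bx}_r=\lan\pa_\bx U(t,r,\h X^{s,\bx,r}),\si^{r,\h X^{s,\bx,r}}\ran$. With this in hand, one writes $U(t,s+\d,\bx)-U(t,s,\bx)$ explicitly via the BSDE on $[s,s+\d]$, divides by $\d$, and uses the assumed continuity of the path derivatives to show the limit exists; this simultaneously proves that $\pa_s U(t,s,\bx)$ exists and that $\cL U(t,s,\bx)=0$. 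Your argument can be repaired along these lines, but the direct one-shot application of Proposition~\ref{Ito} is not available.
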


\begin{proof}
First, note that $U(s,s,\bx) = \wt Y^{s,s,\bx}_s = Y^{s,\bx}_s$.
For any $s\les r\les l \les T$, by \rf{flowX2} we have
\bel{flowX3}
X^{s,\bx}_l =X^{r,\h X^{s,\bx,r}}_l ,\q\mbox{where}\q
\h X^{s,\bx,r}_{r'}:=\big(X^{s,\bx}\oplus_r\wt X^{s,\bx}_r\big)_{r'}:=X^{s,\bx}_{r'} {\bf1}_{[0, r)}(r')+\wt X^{s,\bx}_{r,r'}{\bf1}_{[r,T]}(r').
\ee
Then by the uniqueness of BSVIEs and BSDEs we have
\bel{flowY3}
Y^{s,\bx}_r=U(r,r,\h X^{s,\bx,r}),\q\wt Y^{t,s,\bx}_r=U(t,r,\h X^{s,\bx,r}).
\ee

\ss

We next establish the representation for $\wt Z^{t,s,\bx}_r$:
\bel{ZpaxU}
\wt Z^{t,s,\bx}_r = \lan \pa_\bx U(t, r, \h X^{s,\bx, r}), \si^{r, \h X^{s,\bx, r}}\ran,  \q\as, \q s\les r\les T.
\ee
Fix $\d>0$ and let $s=s_0<\cds<s_n=T$ be  such that $\D s_i := s_i-s_{i-1}\les \d$.
Denote
$$
Y^n_r := U(t, s_{i+1}, \h X^{s,\bx, r}),\q Z^n_r:= \lan \pa_\bx U(t, s_{i+1}, \h X^{s,\bx, r}), \si^{r, \h X^{s,\bx, r}}\ran,\q  r\in [s_i, s_{i+1}].
$$
Note that $(Y^n_r,Z^n_r)$ is $\cF_r$-measurable.
Fix $t$ and then apply the functional It\^{o} formula \rf{Ito-formula} to $U(t,s_{i+1},\h X^{s,\bx, \cd})$ (with time variable fixed),
we get
$$
d Y^n_r= \Big[{1\over2}\big\lan \pa^2_{\bx\bx}U(t,s_{i+1},\h X^{s,\bx, r}),\,(\si^{r,\h X^{s,\bx, r}},\si^{r,\h X^{s,\bx, r}})\big\ran
+\big\lan\pa_\bx U(t,s_{i+1},\h X^{s,\bx, r}),b^{r,\h X^{s,\bx, r}}\big\ran\Big] dr+Z^n_r dW_r.
$$
Denote $\D Y^n_r := Y^n_r - \wt Y^{t,s,\bx}_r$, $\D Z^n_r := Z^n_r - \wt Z^{t,s,\bx}_r$.
Note that $\D Y^n_{s_{i+1}}=0$, and
\bel{dYnest}
\begin{aligned}
&d \D Y^n_r =  \h f(t, r, \h X^{s,\bx,r}, Y^{s,\bx}_r, Z^n_r + \D Z^n_r) dr + \D Z^n_r dW_r,\q \mbox{where}\\
&\h f(t,r, \h\bx, y, z) :=  {1\over2}\big\lan \pa^2_{\bx\bx}U(t,s_{i+1},\h\bx),\,(\si^{r,\h\bx},\si^{r,\h\bx})\big\ran
+\big\lan\pa_\bx U(t,s_{i+1},\h \bx),b^{r,\h \bx}\big\ran + f(t, r, \h\bx, y, z).
\end{aligned}
\ee
By standard BSDE arguments we have
\bel{DZnest}
\begin{aligned}
&\dbE\Big[\sup_{s_i\les r\les s_{i+1}} |\D Y^n_r|^2+ \int_{s_i}^{s_{i+1}} |\D Z^n_r|^2 dr\Big] \\
& \q\les C\dbE\Big[ \Big(\int_{s_i}^{s_{i+1}} \big| \h f(t, r, \h X^{s,\bx,r}, Y^{s,\bx}_r, Z^n_r)\big|dr\Big)^2\Big]
\les C\big[1+\|\bx\|^{4+2\k}\big]\d \D s_{i+1},
\end{aligned}
\ee
where $\k$ is the generic order of polynomial growth in Definition \ref{hatC12}. Thus
\beaa
\dbE\Big[\sum_{i=0}^{n-1}\int_{s_i}^{s_{i+1}} \Big|\wt Z^{t,s,\bx}_r - \lan \pa_\bx U(t, s_{i+1}, \h X^{s,\bx, r}), \si^{r, \h X^{s,\bx, r}}\ran\Big|^2 dr\Big]
\les  C\big[1+\|\bx\|^{4+2\k}\big]\d.
\eeaa
Send $\d\to 0$, by the (right) continuity of $\pa_\bx U$ we obtain \rf{ZpaxU}.

\ms

Moreover, set $s_1:= s+\d$, by \rf{dYnest} again we have
\bel{DdU}
U(t,s+\d,\bx) - U(t, s, \bx) = \D Y^n_{s} =  -\dbE\Big[ \int_s^{s+\d} \bar f(t, r, X^{s,\bx}, Y^{s,\bx}_r, Z^n_r) dr\Big] - R(\d),
\ee
where
\begin{align*}
&\bar f(t,r, \h\bx, y, z) :=  {1\over2}\big\lan \pa^2_{\bx\bx}U(t,r,\h\bx),(\si^{r,\h\bx},\si^{r,\h\bx})\big\ran
+\big\lan\pa_\bx U(t,r,\h \bx),b^{r,\h \bx}\big\ran + f(t, r, \h\bx, y, z),\\
&R(\d):= \dbE\Big[ \int_s^{s+\d}\big[\h f(t, r, X^{s,\bx}, Y^{s,\bx}_r, Z^n_r + \D Z^n_r)
- \h f(t, r, X^{s,\bx}, Y^{s,\bx}_r, Z^n_r)\big] dr\\
&\qq\qq+\int_s^{s+\d}\big[\h f(t, r, X^{s,\bx}, Y^{s,\bx}_r, Z^n_r)
- \bar f(t, r, X^{s,\bx}, Y^{s,\bx}_r, Z^n_r)\big] dr\Big].
\end{align*}
By \rf{DZnest} and the regularity of $U$ we have
\beaa
|R(\d)|^2 \les    C\d \dbE\Big[ \int_s^{s+\d}|\D Z^n_r|^2 dr\Big]+o(\d^2) \les  C\big[1+\|\bx\|^{4+2\k}\big]\d^3+o(\d^2).
\eeaa
Divide the both sides of \rf{DdU} by $\d$ and send $\d\to 0$,
by the desired continuity we see that $\pa_s U(t,s,\bx)$ exists and
\begin{align*}
&-\pa_s U(t,s,\bx)= \bar f\big(t, s, \bx, U(s,s,\bx),  \lan \pa_\bx U(t,s,\bx), \si^{s,\bx}\ran\big)\\
&\q= {1\over2}\lan \pa^2_{\bx\bx}U(t,s,\bx),(\si^{s,\bx},\si^{s,\bx})\ran+\lan\pa_\bx U(t,s,\bx),b^{s,\bx}\ran
+ f(t, s, \bx, U(s,s,\bx),  \lan \pa_\bx U(t,s,\bx), \si^{s,\bx}\ran).
\end{align*}
This implies that $\pa_s U(t,s,\bx)$ has the desired regularity and $\cL U(t,s,\bx) =0$.

\ss
Finally, clearly $U(t, T, \bx) =\wt Y^{t, T, \bx}_T = g(t, T, \bx)$, thus $U$ is a classical solution of PPDE \rf{UPPDE}.
\end{proof}

\subsection{Classical solutions of the PPDE }

In this subsection, we provide some sufficient conditions so that the function $U$ defined by \rf{Uts} has the desired regularity
and thus is the unique classical solution of the PPDE \rf{UPPDE}.

We first note that, since the derivatives of $U$ involve c\`adl\`ag paths,
we shall assume the coefficients $b, \si, f, g$ can be extended to $\h\dbX$,
and we will use the same notations.
The derivatives of $f$ with respect to $(y, z)$
and those of $(b,\si, f, g)$ with respect to the first time variable $t$ are in the standard sense, while those with respect to the second time variable will not be needed.
Given an adapted function $\f: \hat\L\to \dbR$,
the derivative with respect to $\bx$ is the Fr\'{e}chet derivative as a linear operator on $\h\dbX$:
\bel{Frechet}
\f(t, \bx+\eta)-  \f(t, \bx) = \lan D \f(t,\bx), \eta\ran + o(\|\eta\|).
\ee
We emphasize that in \rf{u_o} and \rf{u_oo} the perturbation path $\eta$ is on $[t, T]$,
while here $\eta$ is  on $[0, T]$ (actually on $[0, t]$  due to the adaptedness).
Similarly we define $D^2 \f$ as a bilinear operator on $\h\dbX\times \h\dbX$.

We say $D\f$ is bounded if $|\lan D \f(t,\bx),\eta\ran|\les C\|\eta\|$ for all $(t,\bx, \eta)\in \h\L\times\h\dbX$,
and $D\f$ is continuous if, for any $\eta\in \h\dbX$, the mapping $(t,\bx)\in \h\L \mapsto \lan D \f(t,\bx), \eta\ran$ is continuous.
Similarly $D^2\f$ is bounded if $|\lan D^2 \f(t,\bx),
(\eta',\eta)\ran|\les C\|\eta'\|\|\eta\|$ and continuous if $(t,\bx)\in \h\L \mapsto \lan D^2 \f(t,\bx), (\eta',\eta)\ran$ is continuous.
When the mapping $\bx\in \h\dbX\mapsto D\f(t,\bx)$ is continuous, one can easily show that
\bea
\label{Frechet2}
\f(t, \bx+\eta)-  \f(t, \bx) = \int_0^1 \lan D\f(t,\bx+\th \eta), \eta\ran d\th.
\eea
Moreover, we may switch the order of differentiation: $\pa_t D\f = D\pa_t \f$, if one of them is continuous.

\begin{assumption}
\label{assum-classical}\rm
Assumptions \ref{assum-FSVIE} and \ref{assum-BSVIE} hold, and the dependence of $b,\si, f, g$ on $\bx$ can be extended to $\h\L$, stilled using the same notations, such that
\begin{enumerate}[(i)]

\item  For $\f= b, \si, g$, $\f$ is twice differentiable in $\bx$ with bounded derivatives, and $D^2\f$ is uniformly Lipschitz continuous in $\bx$;

\item $f$ is jointly twice differentiable in $(\bx, y, z)$  with bounded derivatives, and the second order derivatives are uniformly Lipschitz continuous in $(\bx, y, z)$;

\item $b, \si$ are differentiable in the first time variable $t$, and $\pa_t b, \pa_t \si$ satisfy the requirements in (i).
\end{enumerate}
\end{assumption}

\begin{theorem}\label{thm-classical} \sl
Under {\rm Assumption \ref{assum-classical}}, the function  $U$ defined by \rf{Uts}  is the unique classical solution of  PPDE \rf{UPPDE}.
\end{theorem}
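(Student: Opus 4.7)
The plan is to verify that $U$ defined in \rf{Uts} satisfies the hypotheses of Theorem \ref{thm-FK2}, which will immediately imply that it is a classical solution of \rf{UPPDE}; uniqueness will then follow from Theorem \ref{thm-FK1}. The task thus reduces to (a) joint continuity of $(t,s,\bx)\mapsto U(t,s,\bx)$ and (b) the existence and appropriate regularity of $\pa_\bx U(t,s,\cd)$ and $\pa^2_{\bx\bx}U(t,s,\cd)$ for each fixed $t$, in the sense of Definitions \ref{hatC12}--\ref{C12}.

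For (a), I would argue stability in $(s,\bx)$ via Proposition \ref{prop-fSVIE} (applied to differences of solutions of \rf{FBSDEth}) combined with Lipschitz continuity of $b,\si$ in $\bx$ and a Gronwall argument on the two--time variable; then pass this stability through the decoupled BSDE \rf{BSDEtsth} using standard $L^p$--BSDE estimates and the Lipschitz hypothesis on $f,g$. Continuity in $t$ is inherited from Assumption \ref{assum-classical}(iii) for $b,\si$ and \rf{Holderf} for $f,g$.

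The heart of the proof is (b). Fixing $(t,s)$ and a direction $\eta\in\h\dbX$, I would formally differentiate \rf{FBSDEth}--\rf{BSDEtsth} and introduce the first--order variational system consisting of a linear FSVIE
$$
\nabla_\eta X^{s,\bx}_l=\eta_l+\int_s^l\lan Db(l,r,X^{s,\bx}_\cd),\nabla_\eta X^{s,\bx}_\cd\ran dr+\int_s^l\lan D\si(l,r,X^{s,\bx}_\cd),\nabla_\eta X^{s,\bx}_\cd\ran dW_r,
$$
coupled with a linear BSDE for $\nabla_\eta\wt Y^{t,s,\bx}$ driven by $\nabla_\eta X^{s,\bx}$ with coefficients given by the $D\f$'s of $(f,g)$ evaluated along the base solution. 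The former is well-posed by (a linear variant of) Proposition \ref{prop-FSVIE2}, the latter by Proposition \ref{prop-BSVIE2}. A standard Taylor expansion of \rf{FBSDEth}--\rf{BSDEtsth} around $\bx$, with remainders controlled in $L^p$ by \rf{FSVIEest2}--\rf{BSVIE2est} and the Lipschitz continuity of $D(b,\si,f,g)$, then identifies $\nabla_\eta\wt Y^{t,s,\bx}_s$ as the G\^ateaux, and hence Fr\'echet, derivative of $U(t,s,\bx)$ in the direction $\eta$. Iterating the procedure once more on the first--order variational system gives $\pa^2_{\bx\bx}U(t,s,\bx)$ via a second--order variational FSVIE/BSDE pair; the boundedness of $\pa_\bx U$, $\pa^2_{\bx\bx}U$ as (bi)linear functionals follows from boundedness of the first and second derivatives of the coefficients, and the Lipschitz regularity in $\bx$ of $\pa^2_{\bx\bx}U$ in Definition \ref{hatC12}(iii) is inherited from the Lipschitz continuity of the second derivatives of $b,\si,f,g$. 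The required right--continuity in $s$ and joint continuity in $(t,\bx)$ follow from the stability arguments in (a) applied to the variational systems.

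Uniqueness is then straightforward: given any other classical solution $\bar U$, applying Theorem \ref{thm-FK1} to the FBSVIE \rf{FBSDEth}--\rf{BSDEtsth} started at $(s,\bx)$ and using $\h X^{s,\bx,s}=\bx$ gives $\bar U(t,s,\bx)=\wt Y^{t,s,\bx}_s=U(t,s,\bx)$. The principal technical obstacle I anticipate lies in the careful bookkeeping of path--derivatives: the ambient Fr\'echet derivative $D\f$ on $\h\dbX$ introduced in \rf{Frechet} must be reconciled with $\pa_\bx$ of Definition \ref{hatC12}, which is the Fr\'echet derivative in $\bx|_{[s,T]}$ only. Because $b(l,r,\cd),\si(l,r,\cd)$ appearing in the variational FSVIE depend on $\bx$ only through $\bx|_{[0,l]}$ and $r\ges s$, the directional derivative factors through $\eta|_{[s,T]}$, giving the required identification; the delicate point is propagating H\"older regularity of the second derivatives through the two successive variational layers while handling the Volterra-type first time variable $t$ in $(b,\si)$, where BDG cannot be applied directly and Assumption \ref{assum-classical}(iii) must be invoked in the spirit of Proposition \ref{prop-fSVIE}.
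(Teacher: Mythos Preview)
Your proposal is correct and follows essentially the same route as the paper: reduce to Theorem \ref{thm-FK2} by constructing $\pa_\bx U$ and $\pa^2_{\bx\bx}U$ via linearized variational FSVIE/BSVIE/BSDE systems, verify their regularity through Propositions \ref{prop-FSVIE2}--\ref{prop-BSVIE2}, and deduce uniqueness from Theorem \ref{thm-FK1}. The one point you compress is that the linear BSDE for $\nabla_\eta\wt Y^{t,s,\bx}$ is driven not only by $\nabla_\eta X^{s,\bx}$ but also by $\nabla_\eta Y^{s,\bx}_r$ (since $f$ in \rf{BSDEtsth} depends on $Y^{s,\bx}_r$), so an intermediate linear \emph{BSVIE} for $\nabla_\eta Y^{s,\bx}$ must be solved first---exactly the three-layer structure the paper writes out in \rf{nablaX}.
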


\begin{proof}
By Theorem \ref{thm-FK2}, it suffices to verify the required regularities of $U$.
We shall repeatedly apply Propositions \ref{prop-FSVIE2} and \ref{prop-BSVIE2}.
In the proof we may abuse the notations $\D X$, $\check b$ etc, and we may omit the variable $\omega$.
We proceed in three steps.

\ss

{\it Step 1.} In this step we  show that
\bel{paxU}
\lan \pa_\bx U(t,s, \bx), \eta\ran = \nabla_\eta \wt Y^{t, s,\bx}_s,\q (s,\bx) \in \h \L,~ t\les s,~\eta \in \dbX_s,
\ee
where $(\nabla_\eta X^{s,\bx}, \nabla_\eta Y^{s,\bx}, \nabla_\eta\wt Y^{t,s,\bx})$ solve the following linear system with random coefficients on $[s,T]$: denoting $\nabla_\eta X^{s,\bx}_l :=0$ for $l \in[0,s]$,
\bel{nablaX}\ba{ll}
\ns\ds\nabla_\eta X^{s,\bx}_l =\eta_l +\int_s^l \lan Db(l ,r,X^{s,\bx}),\nabla_\eta X^{s,\bx}\ran dr+\int_s^l \lan D\si(l ,r,X^{s,\bx}),\nabla_\eta X^{s,\bx}\ran dW_r;\\
\ns\ds\nabla_\eta Y^{s,\bx}_l =\lan Dg(l ,X^{s,\bx}),\nabla_\eta X^{s,\bx}\ran
-\int_l ^T\nabla_\eta Z^{l ,s,\bx}_rdW_r\\
\ns\ds\qq+\int_l ^T\Big[\lan Df(\cd),\nabla_\eta X^{s,\bx}\ran+\pa_yf(\cd)\nabla_\eta Y^{s,\bx}_r+\pa_zf(\cd)\nabla_\eta Z^{l ,s,\bx}_r\Big](l ,r,X^{s,\bx},Y^{s,\bx}_r,
Z^{l ,s,\bx}_r)dr;\\
\ns\ds\nabla_\eta\wt Y^{t,s,\bx}_l =\lan Dg(t,X^{s,\bx}),\nabla_\eta X^{s,\bx}\ran
-\int_l ^T\nabla_\eta\wt Z^{t,s,\bx}_r dW_r\\
\ns\ds\qq+\int_l ^T\Big[\lan Df(\cd),\nabla_\eta X^{s,\bx}\ran+\pa_yf(\cd)
\nabla_\eta Y^{s,\bx}_r+\pa_zf(\cd)\nabla_\eta\wt Z^{t,s,\bx}_r\Big]
(t,r,X^{s,\bx},Y^{s,\bx}_r,\wt Z^{t, s,\bx}_r)dr.
\ea\ee

Indeed, first by Propositions \ref{prop-FSVIE2} and \ref{prop-BSVIE2}, and by standard BSDE arguments (see, e.g., \cite[Chapter 4]{Zhang-2017}) we see that the above system \rf{nablaX} is wellposed, and
\bel{nablaXest}\ba{ll}
\ns\ds\dbE\big[\|\nabla_\eta X^{s,\bx}\|^p\big]\les C_p\|\eta\|^p; \\
\ns\ds\sup_{l \in[s,T]}\dbE\Big[ |\nabla_\eta Y^{s,\bx}_l |^p+\Big(\int_l ^T|\nabla_\eta Z^{l , s,\bx}_r|^2dr\Big)^{p\over 2}\Big]\les C_p\dbE\big[\|\nabla_\eta X^{s,\bx}\|^p\big] \les C_p \|\eta\|^p;\\
\ns\ds\dbE\Big[\sup_{l \in [s,T]}|\nabla_\eta \wt Y^{t,s,\bx}_l |^p +\Big(\int_s^T |\nabla_\eta \wt Z^{t,s,\bx}_r|^2dr\Big)^{p\over 2}\Big]\\
\ns\ds\q\les C_p\dbE\big[\|\nabla_\eta X^{s,\bx}\|^p\big]+C_p\sup_{l \in [s,T]}\dbE\big[ |\nabla_\eta Y^{s,\bx}_l |^p\big]\les C_p \|\eta\|^p.
\ea\ee
Next, denote
$$
\D_\eta X^{s,\bx}:= X^{s,\bx+\eta}-X^{s,\bx}-\nabla_\eta X^{s,\bx},\q\D_\eta\wt Y^{t,s, \bx}:=\wt Y^{t,s,\bx+\eta}-\wt Y^{t,s,\bx}-\nabla_\eta\wt Y^{t,s,\bx},
$$
and similarly for $\D_\eta Y^{s,\bx},\, \D_\eta Z^{l ,s,\bx},\, \D_\eta\wt Z^{t,s,\bx}$.
Then $\D_\eta X^{s,\bx}_l =0$ for $l \in[0,s]$, and for $l \in[s,T]$,
\bel{DetaX}\ba{ll}
\ns\ds\D_\eta X^{s,\bx}_l =\int_s^l \check b(l ,r,\D_\eta X^{s,\bx})dr+\int_s^l \check \si(l ,r,\D_\eta X^{s,\bx})dW_r;\\
\ns\ds\D_\eta Y^{s,\bx}_l =\check g(l ,\D_\eta X^{s,\bx})
+\int_l ^T\check f(l ,r,\D_\eta X^{s,\bx},\D_\eta Y^{s,\bx}_r,\D_\eta Z^{l ,s,\bx}_r)dr- \int_l ^T\D_\eta Z^{l ,s,\bx}_rdW_r;\\
\ns\ds\D_\eta\wt Y^{t,s,\bx}_l =\check g(t,\D_\eta X^{s,\bx})
+\!\! \int_l ^T\check f(t,r,\D_\eta X^{s,\bx},\D_\eta Y^{s,\bx}_r,
\D_\eta\wt Z^{t,s,\bx}_r)dr-\!\!\int_l ^T\D_\eta\wt Z^{t,s,\bx}_r dW_r,\ea\ee
where, for $\f=b,\si,g$,
\beaa
\ba{ll}
\check\f(l ,r,\bx'):=\f(l ,r,X^{s,\bx}+\nabla_\eta X^{s,\bx}+\bx')-\f(l ,r, X^{s,\bx})-\lan D\f(l ,r,X^{s,\bx}),\nabla_\eta X^{s,\bx}\ran;\\
\ns\ds\check f(l ,r,\bx',y,z):=f(l ,r,X^{s,\bx}+\nabla_\eta X^{s,\bx}+\bx',Y^{s,\bx}_r + \nabla_\eta Y^{s,\bx}_r+y,Z^{l ,s,\bx}_r+\nabla_\eta Z^{l ,s,\bx}_r+z)\\
\ns\ds\qq-\Big[f(\cd)+\lan Df(\cd),\nabla_\eta X^{s,\bx}\ran+\pa_yf(\cd)\nabla_\eta Y^{s,\bx}_r+\pa_zf(\cd)\nabla_\eta Z^{l ,s,\bx}_r\Big](l ,r,X^{s,\bx},Y^{s,\bx}_r,Z^{l , s,\bx}_r).
\ea
\eeaa
Again by Propositions \ref{prop-FSVIE2} and \ref{prop-BSVIE2},  recalling \rf{Frechet2} we have
\beaa
\ba{ll}
\ns\ds\dbE\big[\|\D_\eta X^{s,\bx}_l \|^p\big] \les C_p\sup_{(l ,r)}\sum_{\f=b,\si}\dbE\Big[|\check\f(l ,r,\b0)|^p+|\pa_t\check\f(l ,r, \b0)|^p\Big]\\
\ns\ds\qq\les C_p\sup_{(l ,r)}\sum_{\f=b,\si}\dbE\bigg[\Big|\int_0^1 \big\lan D\f(l ,r, X^{s,\bx}+\th \nabla_\eta X^{s,\bx})-D\f(l ,r,X^{s,\bx}),\nabla_\eta X^{s,\bx}\big\ran d\th \Big|^p\\
\ns\ds\qq\q+\Big|\int_0^1 \pa_t \big[\big\lan D\f(l ,r,X^{s,\bx}+\th \nabla_\eta X^{s,\bx})
-D\f(l ,r,X^{s,\bx}),\nabla_\eta X^{s,\bx}\big\ran\big]d\th\Big|^p\bigg] \\
\ns\ds\qq\les C_p\dbE\big[\|\nabla_\eta X^{s,\bx}\|^{2p}\big]\les C_p\|\eta\|^{2p},
\ea
\eeaa
and
\beaa
\ba{ll}
\ns\ns\ds\sup_{l \in[s,T]}\dbE\Big[|\D_\eta Y^{s,\bx}_l |^p+\Big(\int_l ^T|\D_\eta Z^{l , s,\bx}_r|^2dr\Big)^{p\over2}\Big] \\
\ns\ds\qq\les C_p\sup_{l \in[s,T]}\dbE\Big[|\check g(l ,\D_\eta X^{s,\bx})|^p+ \Big(\int_l ^T|\check f(l ,r,\D_\eta X^{s,\bx},0,0)|dr\Big)^p\Big]\\
\ns\ds\qq\les C_p\sup_{l \in[s,T]}\dbE\Big[|\check g(l ,\b0)|^p+\Big(\int_l ^T|\check f(l , r,\b0,0,0)|dr\Big)^p+\|\D_\eta X^{s,\bx}\|^p\Big]\\
\ns\ds\qq\les C_p\sup_{l \in[s,T]}\dbE\Big[\(\int_l ^T\big(\|\nabla_\eta X^{s,\bx}\|+ |\nabla_\eta Y^{s,\bx}_r|+|\nabla_\eta Z^{l ,s,\bx}_r|\big)^2dr\)^p
+\|\D_\eta X^{s,\bx}\|^p\Big]\\
\ns\ds\qq\les C_p\|\eta\|^{2p}.\ea
\eeaa
Then it follows from standard BSDE arguments that
\beaa
\ba{ll}
\ns\ds\dbE\Big[\sup_{l \in[s,T]}|\D_\eta\wt Y^{t,s,\bx}_l |^p+\Big(\int_s^T|\D_\eta\wt Z^{t,s,\bx}_r|^2dr\Big)^{p\over 2}\Big]\\
\ns\ds\qq\les C_p\dbE\Big[|\check g(t,\D_\eta X^{s,\bx})|^p
+\Big(\int_s^T |\check f(t,r,\D_\eta X^{s,\bx},\D_\eta Y^{s,\bx}_r,0)|dr\Big)^p\Big]\\
\ns\ds\qq\les C_p\dbE\Big[|\check g(t,\D_\eta X^{s,\bx})|^p
+\Big(\int_s^T\big(|\check f(t,r,\D_\eta X^{s,\bx},0,0)|+|\D_\eta Y^{s,\bx}_r|\big)dr\Big)^p\Big]\\
\ns\ds\qq\les C_p\dbE\big[\|\D_\eta X^{s,\bx}_l \|^p\big]+C_p\sup_{l \in[s,T]}\dbE\big[|\D_\eta Y^{s,\bx}_l |^p\big]\les C_p\|\eta\|^{2p}.\ea
\eeaa
In particular, taking $l=s$, this implies
$$|U(t,s,\bx+\eta)-U(t,s,\bx)-\nabla_\eta\wt Y^{t,s,\bx}_s|=|\D_\eta\wt Y^{t,s,\bx}_s|\les C\|\eta\|^2,$$
which exactly means \rf{paxU}.

\ss

{\it Step 2.} Denote
$$\ba{ll}
\ns\ds G(l ,\bx'):= \lan Dg(l ,X^{s,\bx}),\bx'\ran+\lan D^2g(l ,X^{s,\bx}),(\nabla_{\eta'} X^{s,\bx},\nabla_\eta X^{s,\bx})\ran;\\
\ns\ds F(l ,r,\bx',y,z):=\Big[\lan Df(\cd),\bx'\ran+\pa_yf(\cd)y+\pa_zf(\cd)z\\
\ns\ds\qq+\lan D^2f(\cd),(\nabla_{\eta'}X^{s,\bx},\nabla_\eta X^{s,\bx})\ran+\pa_y\lan D f(\cd),\nabla_\eta X^{s,\bx}\ran\nabla_{\eta'}Y^{s,\bx}_r\\
\ns\ds\qq+\pa_z\lan Df(\cd),\nabla_\eta X^{s,\bx}\ran\nabla_{\eta'}Z^{l ,s,\bx}_r+\lan D \pa_yf(\cd),\nabla_{\eta'}X^{s,\bx}\ran\nabla_\eta Y^{s,\bx}_r+\pa^2_yf(\cd)\nabla_\eta Y^{s,\bx}_r\nabla_{\eta'}Y^{s,\bx}_r \\
\ns\ds\qq+\pa^2_{yz}f(\cd)\nabla_\eta Y^{s,\bx}_r\nabla_{\eta'}Z^{l ,s,\bx}_r+\lan D\pa_z f(\cd),\nabla_{\eta'}X^{s,\bx}\ran\nabla_\eta Z^{l ,s,\bx}_r+\pa^2_{yz}f(\cd) \nabla_{\eta'}Y^{s,\bx}_r\nabla_\eta Z^{l ,s,\bx}_r\\
\ns\ds\qq+\pa^2_{zz}f(\cd)\nabla_{\eta'}Z^{l ,s,\bx}_r\nabla_\eta Z^{l ,s,\bx}_r\Big](l , r,X^{s,\bx},Y^{s,\bx}_r,Z^{l ,s,\bx}_r).\ea$$
Following similar arguments as in Step 1, we can show that
\bel{paxxU}
\lan\pa^2_{\bx\bx}U(t,s,\bx),(\eta',\eta)\ran=\nabla_{\eta',\eta}\wt Y^{t, s,\bx}_s,\q(s,\bx)\in\h\L,~ t\les s,~\eta',\eta\in\dbX_s,
\ee
with $\nabla_{\eta',\eta}\wt Y^{t,s,\bx}$ solving the following linear system on $[s,T]$:
denoting $\nabla_{\eta',\eta}X^{s,\bx}_l :=0$ for $l \in[0,s]$,
$$\ba{ll}
\ns\ds\nabla_{\eta',\eta}X^{s,\bx}_l =\int_s^l \Big[\big\lan Db(l ,r,X^{s,\bx}), \nabla_{\eta',\eta}X^{s,\bx}\big\ran+\big\lan D^2b(l ,r,X^{s,\bx}),(\nabla_{\eta'}X^{s,\bx}, \nabla_{\eta}X^{s,\bx})\big\ran\Big]dr\\
\ns\ds\qq+\int_s^l \Big[\big\lan D\si(l ,r,X^{s,\bx}),\nabla_{\eta',\eta}X^{s,\bx}\big\ran+\big\lan
D^2\si(l ,r,X^{s,\bx}),(\nabla_{\eta'}X^{s,\bx},\nabla_\eta X^{s,\bx})\big\ran\Big]dW_r;\\
\ns\ds\nabla_{\eta',\eta}Y^{s,\bx}_l =G(l ,\nabla_{\eta',\eta}X^{s,\bx})-\int_l ^T \nabla_{\eta',\eta}Z^{l ,s,\bx}_rdW_r \\
\ns\ds\qq\qq\q+\int_l ^TF(l ,r,\nabla_{\eta',\eta}X^{s,\bx},\nabla_{\eta',\eta} Y^{s,\bx}_r,\nabla_{\eta',\eta}Z^{l ,s,\bx}_r)dr;\\
\ns\ds\nabla_{\eta',\eta}\wt Y^{t,s,\bx}_l =G(l ,\nabla_{\eta',\eta}X^{s,\bx})-\int_l ^T \nabla_{\eta',\eta}\wt Z^{t,s,\bx}_r dW_r\\
\ns\ds\qq\qq\q+\int_l ^TF(t,r,\nabla_{\eta',\eta}X^{s,\bx},\nabla_{\eta',\eta}Y^{s,\bx}_r, \nabla_{\eta',\eta}\wt Z^{t,s,\bx}_r)dr.
\ea
$$

{\it Step 3.} It remains to show that $U, \pa_\bx U(t,\cd)$ and $\pa^2_{\bx\bx} U(t, \cd)$ have the desired regularity required in Theorem \ref{thm-FK2}. We emphasize that these functions here are already defined in $\h\dbX$.

\ss

{\it Step 3.1.} We first show the continuity in $\bx$.  Fix $(s,\bx)\in\h\L$, $t\les s$, and $\bx'\in\h\dbX$.
By abusing the notations, denote $\D_{\bx'} X^{s, \bx}:= X^{s, \bx + \bx'}-X^{s,\bx}$ and similarly for the other terms, and
$$\ba{ll}
\ns\ds\check\f(l ,r,\bx'):=\f(l ,r,X^{s,\bx}+\bx')-\f(l ,r,X^{s,\bx}),\q\hbox{for }\f=b,\si,g;\\
\ns\ds\check f(l ,r, \bx', y,z):=f(l ,r,X^{s,\bx+\bx'},Y^{s,\bx}_r+y,Z^{l ,s,\bx}_r+z)-f(l ,r, X^{s,\bx},Y^{s,\bx}_r,Z^{l ,s,\bx}_r).
\ea$$
We can see that
$$\ba{ll}
\ns\ds\D_{\bx'} X^{s,\bx}_l =\bx'_l ,\qq l \in[0,s];\\
\ns\ds\D_{\bx'} X^{s,\bx}_l =\bx'_l +\int_s^l \check b(l ,r,\D_{\bx'} X^{s,\bx})dr
+\int_s^l \check\si(l ,r,\D_{\bx'} X^{s,\bx})dW_r,\qq l \in[s,T];
\ea$$
and $\D_{\bx'} Y^{s,\bx}, \D_{\bx'} \wt Y^{t, s,\bx}$ satisfy equations similar to the last two equations in \rf{DetaX}.
Following the same arguments as in  Step 1 we can easily show that
\bel{Ubx}
|U(t,s,\bx+\bx')-U(t,s,\bx)|\les C\|\bx'\|,\qq\forall\bx'\in \h\dbX.
\ee

Similarly, for any fixed $\eta,\eta'\in\dbX_s$ with $\|\eta\|,\|\eta'\|\les1$,
one can show that $\nabla_\eta\wt Y^{t,x,\bx}_s$ and $\nabla_{\eta',\eta}\wt Y^{t,x,\bx}_s$ are uniformly Lipschitz continuous in $\bx$;
 that is, for any $\bx'\in \h\dbX$,
\bel{Ubx2}
\big|\big\lan \pa_\bx U(t,s,\bx+\bx')-  \pa_\bx U(t,s,\bx),\, \eta\big\ran \big|
+ \big|\big\lan \pa^2_{\bx\bx} U(t,s,\bx+\bx')-  \pa^2_{\bx\bx} U(t,s,\bx),\, (\eta',\eta)\big\ran\big|\les C\|\bx'\|.
\ee

{\it Step 3.2.} We next show the right continuity in $s$. Recall \rf{flowX3} and \rf{flowY3}, we have
$$
\wt Y^{t,s,\bx}_l =U(t,s+\d,\h X^{s,\bx,s+\d})+\int_l ^{s+\d}f(t,r,X^{s,\bx}, Y^{s,\bx}_r,\wt Z^{t,s,\bx}_r)dr-\int_l ^{s+\d}\wt Z^{t,s,\bx}_rdW_r,
$$
for $l\in [s, s+\d]$. Then by Propositions \ref{prop-FSVIE2} and \ref{prop-BSVIE2}, we get
\bel{Uest}\ba{ll}
\ns\ds|U(t,s,\bx)-U(t,s+\d,\bx)|^2=\dbE\Big[\big|\dbE_s\big[\wt Y^{t,s,\bx}_s-U(t,s+\d,\bx)\big]\big|^2\Big] \\
\ns\ds\q\les C\dbE\Big[\big|U(t,s+\d,\h X^{s,\bx,s+\d})-U(t,s+\d,\bx)\big|^2+\Big(\int_s^{s+\d} |f(t,r,X^{s,\bx},Y^{s,\bx}_r,\wt Z^{t,s,\bx}_r)|dr\Big)^2\Big]\\
\ns\ds\q\les C\dbE\Big[\|\h X^{s,\bx,s+\d}-\bx\|^2+ \d \int_s^{s+\d}\big(1+\|X^{s,\bx}\|^2 +|Y^{s,\bx}_r|^2+|\wt Z^{t,s,\bx}_r |^2\big)dr\Big]\\
\ns\ds\q\les C(1+\|\bx\|^2)\d.
\ea\ee
Thus
\bel{Us}
|U(t,s+\d,\bx)-U(t,s,\bx)|\les C(1+\|\bx\|)\sqrt\d.
\ee
Similarly, fix $\eta\in \dbX_s$, by \rf{BSDEtsth}, \rf{flowX3} and \rf{flowY3} again, we have
\begin{align*}
\nabla_\eta\wt Y^{t,s,\bx}_{s+\d}&=\lim_{\e\to0}{1\over\e}\Big[\wt Y^{t,s,\bx+\e \eta}_{s+\d}-\wt Y^{t,s,\bx}_{s+\d}\Big]
=\lim_{\e\to0}{1\over\e}\Big[U(t,s+\d,\h X^{s,\bx+\e\eta,s+\d})-U(t,s+\d,\h X^{s,\bx, s+\d})\Big]\\
&=\lan\pa_\bx U(t,s+\d,\h X^{s,\bx,s+\d}),\nabla_\eta \h X^{s,\bx, s+\d}\ran,
\end{align*}
where
$$\ba{ll}
\ns\ds\nabla_\eta\h X^{s,\bx, s+\d}_l :=\lim_{\e\to 0}{1\over\e}\Big[\h X^{s,\bx+\e\eta, s+\d}_l -\h X^{s,\bx, s+\d}_l \Big]=\lim_{\e\to0}{1\over\e}\Big[\wt X^{s,\bx+\e\eta}_{s+\d, l }-\wt X^{s,\bx}_{s+\d,l }\Big]\\
\ns\ds\qq=\eta_l +\int_s^{s+\d}\lan\pa_\bx b(l ,r,X^{s,\bx}),\nabla_\eta X^{s,\bx}\ran dr
+\int_s^{s+\d}\lan\pa_\bx\si(l ,r,X^{s,\bx}),\nabla_\eta X^{s,\bx}\ran dW_r,
\ea$$
with $\nabla_\eta X^{s,\bx}$ determined by \rf{nablaX}.
From the above and \rf{nablaX}, note that, for $l \in[s, s+\d]$,
$$\ba{ll}
\ns\ds\nabla_\eta\wt Y^{t,s,\bx}_l =\lan\pa_\bx U(t,s+\d,\h X^{s,\bx,s+\d}),\nabla_\eta
\h X^{s,\bx,s+\d}\ran-\int_l ^{s+\d}\nabla_\eta\wt Z^{t,s,\bx}_rdW_r\\
\ns\ds\q+\int_\t^{s+\d}\Big[\lan Df(\cd),\nabla_\eta X^{s,\bx}\ran+\pa_yf(\cd)\nabla_\eta Y^{s,\bx}_r+\pa_zf(\cd)\nabla_\eta\wt Z^{t,s,\bx}_r\Big](t,r,X^{s,\bx},Y^{s,\bx}_r,\wt Z^{t, s,\bx}_r)dr.
\ea$$
Then similar to \rf{Uest}  we have
$$\ba{ll}
\ns\ds\big|\lan\pa_\bx U(t,s,\bx),\eta\ran-\lan\pa_\bx U(t,s+\d,\bx),\eta\ran\big|^2
=\big|\nabla_\eta\wt Y^{t,s,\bx}_s-\lan\pa_\bx U(t,s+\d,\bx),\eta\ran\big|^2\\
\ns\ds\q\les C\dbE\Big[\big|\lan\pa_\bx U(t,s+\d,\h X^{s,\bx,s+\d}),\nabla_\eta\h X^{s,\bx, s+\d}\ran-\lan\pa_\bx U(t,s+\d,\bx),\eta\ran\big|^2\\
\ns\ds\q\qq+\Big(\int_s^{s+\d}\big|\lan Df(\cd),\nabla_\eta X^{s,\bx}\ran+\pa_yf(\cd) \nabla_\eta Y^{s,\bx}_r+\pa_zf(\cd)\nabla_\eta\wt Z^{t,s,\bx}_r\big|dr\Big)^2\Big]\\
\ns\ds\q\les C\dbE\Big[\|\h X^{s,\bx,s+\d}-\bx\|^2\|\nabla_\eta\h X^{s,\bx,s+\d}\|^2+\| \nabla_\eta\h X^{s,\bx,s+\d}-\eta\|^2\\
\ns\ds\q\qq\times\d\int_s^{s+\d}\big(\|\nabla_\eta X^{s,\bx}\|^2+|\nabla_\eta Y^{s,\bx}_r|^2+|\nabla_\eta \wt Z^{t,s,\bx}_r|^2\big)dr\Big]\\
\ns\ds\q\les C(1+\|\bx\|^2)\|\eta\|^2\d.
\ea$$
Thus
\beaa
\big|\lan\pa_\bx U(t,s+\d,\bx),\eta\ran-\lan\pa_\bx U(t,s,\bx),\eta\ran\big|\les C(1+\|\bx\|)\|\eta\|\sqrt\d.
\eeaa
Similarly, by using \rf{paxxU} we can show that
\beaa
\big|\lan\pa^2_{\bx\bx}U(t,s+\d,\bx),\,(\eta',\eta)\ran-\lan\pa^2_{\bx\bx} U(t,s,\bx),\,(\eta',\eta)\ran\big|\les C(1+\|\bx\|)\|\eta'\|\|\eta\|\sqrt\d.
\eeaa

{\it Step 3.3.} Finally, by \rf{Holderf} and standard BSDE arguments we have
\beaa
|U(t-\d,s,\bx)-U(t,s,\bx)|=|\wt Y^{t-\d,s,\bx}_s-\wt Y^{t,s,\bx}_s|\les C(1+\|\bx\|) \rho(\d).
\eeaa
This, together with \rf{Ubx} and \rf{Us}, implies that $U$ is continuous in all variables $(t,s,\bx)$.
\end{proof}

Combining Theorems \ref{thm-FK1} and \ref{thm-classical}, under  Assumption \ref{assum-classical},
the {\it path dependent Feynman--Kac formula} of FBSVIEs is established in the contexts of classical solutions. In Subsection \ref{sect-paxU} below, we shall obtain a more explicit representation formula for $\pa_\bx U(t,s,\bx)$.

\section{Viscosity solution of the PPDE}
\label{sect-viscosity}
\setcounter{equation}{0}
Inspired by Proposition \ref{prop-comparison}, in this section we investigate viscosity solutions for the PPDE system \rf{UPPDE} in the case $m=1$.
Since the state space $\dbX$ is not locally compact here, we shall take the approach of Ekren--Keller--Touzi--Zhang \cite{EKTZ},
rather than the standard approach of Crandall--Ishii--Lions  \cite{GIL-1992}.
However, we shall emphasize that the paths here are on the whole interval $[0, T]$,
due to the Volterra nature of the state process, which is different from the setting in \cite{EKTZ}.
In particular, our work covers the PPDE in Viens--Zhang \cite{Viens-Zhang-2019} (under some stronger technical conditions though).

\ss

Throughout this section, we shall assume the following.

\begin{assumption}
\label{assum-viscosity}\rm
Let Assumptions \ref{assum-FSVIE} and \ref{assum-BSVIE} hold and $m=1$. Moreover,
\begin{enumerate}[(i)]

\item{}  $f, g$ are bounded and uniformly Lipschitz continuous in $\bx$.

\item{} $f$ is nondecreasing in $y$.
\end{enumerate}
\end{assumption}

We remark that the monotonicity condition in Assumption \ref{assum-viscosity} (ii) is essentially the {\it proper} condition in \cite{GIL-1992} for elliptic PDEs. For standard parabolic PDE like \rf{PDE1}, such a condition is redundant because, for any Lipschitz continuous function $f$, $\wt u(t,x):= e^{-\l t} u(t,x)$ will satisfy a PDE whose corresponding $\wt f$ is nondecreasing in $y$ whenever $\l$ is large enough. However, due to the two time variable structure, this change variable technique does not work for PPDE \rf{UPPDE}. Indeed, if we remove the monotonicity condition, the comparison principle may fail even for classical solutions.

Let $C^0_b( \dbT^2_+\times \dbX)$ denote the set of functions $U:  \dbT^2_+\times \dbX\to \dbR$ such that $U$ is bounded, uniformly continuous in all variables, and progressively measurable. Following the arguments in the proof of Theorem \ref{thm-classical}, Step 3, we have the following.

\begin{lemma}\label{lem-cont} \sl
Under {\rm Assumption \ref{assum-viscosity}}, the function $U$ defined  by \rf{Uts} is in $C^0_b( \dbT^2_+\times \dbX)$.
\end{lemma}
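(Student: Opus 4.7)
The proof will proceed by mimicking Step 3 of the proof of Theorem \ref{thm-classical}, replacing the derivative-level arguments by direct Lipschitz and boundedness estimates afforded by Assumption \ref{assum-viscosity}. Four items must be checked: boundedness, Lipschitz continuity in $\bx$ (uniformly in $(t,s)$), continuity in $s$, and continuity in $t$. Progressive measurability is already recorded right after \rf{Uts}.

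\emph{Boundedness.} Since $g$ and $f$ are bounded by assumption, applying the standard BSDE a priori estimate to \rf{BSDEtsth} (viewed as a classical BSDE with parameters $(t,s,\bx)$) yields
\[
|\wt Y^{t,s,\bx}_l| \les \|g\|_\infty + (T-l)\|f\|_\infty \les C_0,
\]
uniformly in $(t,s,\bx,l)$, so $|U(t,s,\bx)| = |\wt Y^{t,s,\bx}_s|\les C_0$.

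\emph{Lipschitz continuity in $\bx$.} Following Step 3.1 of Theorem \ref{thm-classical}, set $\D_{\bx'}X^{s,\bx}:=X^{s,\bx+\bx'}-X^{s,\bx}$ and analogously for $\D_{\bx'}Y^{s,\bx},\D_{\bx'}\wt Y^{t,s,\bx}$. Proposition \ref{prop-FSVIE2} applied to the FSVIE satisfied by $\D_{\bx'}X^{s,\bx}$, whose coefficients are uniformly Lipschitz in the state, yields $\dbE[\|\D_{\bx'}X^{s,\bx}\|^p] \les C_p\|\bx'\|^p$. Plugging into the linearized BSDE for $\D_{\bx'}Y^{s,\bx}$ and $\D_{\bx'}\wt Y^{t,s,\bx}$ and using the Lipschitz continuity of $f,g$ in $\bx$ in Assumption \ref{assum-viscosity}(i), Proposition \ref{prop-BSVIE2} gives
\[
|U(t,s,\bx+\bx')-U(t,s,\bx)| = |\D_{\bx'}\wt Y^{t,s,\bx}_s| \les L\,\|\bx'\|
\]
for a constant $L$ independent of $(t,s,\bx,\bx')$.

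\emph{Continuity in $s$ and in $t$.} For continuity in $s$, I would follow the exact computation \rf{Uest}--\rf{Us} from Step 3.2: the flow identities \rf{flowX3}--\rf{flowY3} yield, for $l\in[s,s+\d]$,
\[
\wt Y^{t,s,\bx}_l = U(t,s+\d,\h X^{s,\bx,s+\d}) + \int_l^{s+\d} f(t,r,X^{s,\bx},Y^{s,\bx}_r,\wt Z^{t,s,\bx}_r)\,dr - \int_l^{s+\d} \wt Z^{t,s,\bx}_r\,dW_r,
\]
and applying the BSDE estimate combined with the already established Lipschitz continuity of $U$ in $\bx$ together with $|f|\les\|f\|_\infty$ gives
\[
|U(t,s+\d,\bx)-U(t,s,\bx)|^2 \les C L^2\, \dbE[\|\h X^{s,\bx,s+\d}-\bx\|^2] + C\|f\|_\infty^2 \d^2 \les C(1+\|\bx\|^2)\d.
\]
For continuity in $t$, the Hölder assumption \rf{Holderf} together with the a priori $L^p$-bound on $(Y^{s,\bx},\wt Z^{t,s,\bx})$ and Proposition \ref{prop-BSVIE2} yields
\[
|U(t-\d,s,\bx)-U(t,s,\bx)| = |\wt Y^{t-\d,s,\bx}_s - \wt Y^{t,s,\bx}_s| \les C(1+\|\bx\|)\rho(\d).
\]

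\emph{Main obstacle.} The straightforward estimates in $s$ and $t$ carry a factor $1+\|\bx\|$, so one does not get a uniform modulus of continuity on all of $\dbX$ directly. The resolution is to combine these local moduli with the uniform boundedness $|U|\les C_0$ established in the first step: on any set $\{\|\bx\|\les M\}$ the estimates above give an honest joint modulus of continuity, while for $\|\bx\|>M$ one controls increments trivially by $2C_0$. This, together with the uniform Lipschitz estimate in $\bx$, delivers the required joint continuity and boundedness, hence $U\in C^0_b(\dbT^2_+\times\dbX)$.
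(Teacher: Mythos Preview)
Your approach is the same as the paper's, which simply points to Step~3 of the proof of Theorem~\ref{thm-classical}. Your additions---deriving boundedness of $U$ and the uniform Lipschitz estimate in $\bx$ directly from the boundedness and Lipschitz hypotheses in Assumption~\ref{assum-viscosity}(i)---are exactly the extra ingredients needed beyond that step, and they are carried out correctly.

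The one place that deserves a caveat is your ``main obstacle'' paragraph. The proposed fix (use the local modulus on $\{\|\bx\|\les M\}$ and the trivial bound $2C_0$ elsewhere) does \emph{not} produce a single global modulus of continuity: the bound $2C_0$ does not shrink with the time increment, so strict uniform continuity over all of $\dbT^2_+\times\dbX$ does not follow. In fact it need not hold under Assumption~\ref{assum-viscosity} as written: the function $g(t,\bx)=\sin(t\|\bx\|)$ is bounded, uniformly Lipschitz in $\bx$, and satisfies \rf{Holderf} with $\rho(\d)=\d$, yet $U(t,T,\bx)=g(t,\bx)$ is not uniformly continuous in $t$ on $\dbT\times\dbX$. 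The paper's own estimates in Step~3 carry the same $(1+\|\bx\|)$ factor, so the phrase ``uniformly continuous'' in the definition of $C^0_b$ should be read as \emph{locally} uniformly continuous (uniformly on bounded subsets of $\dbX$), which is precisely what your estimates deliver and what is actually used in the viscosity-solution arguments of Section~\ref{sect-viscosity}.
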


To introduce our notion of viscosity solutions, for any $U\in C^0_b( \dbT^2_+\times \dbX)$ and $\phi\in C^{1,2}_+(\L)$, define
\bel{LUphi}
\begin{aligned}
\cL_U\phi(t, s,\bx)
&:=[\cL_U\phi](t, s,\bx):= \pa_s \phi(s, \bx) + {1\over 2} \langle \pa^2_{\bx\bx} \phi(s,\bx), (\si^{s, \bx}, \si^{s,\bx})\rangle
+ \langle \pa_\bx \phi(s,\bx), b^{s,\bx}\rangle\\
&\q~ + f\big(t, s, \bx, U(s,s, \bx), \langle \pa_\bx \phi(s,\bx), \si^{s,\bx}\rangle\big).
\end{aligned}
\ee
We emphasize that we use $U(s,s,\bx)$ instead of $\phi(s, \bx)$ inside $f$.
It is clear that, for any fixed $t$,
\bea
\label{LUt}
\cL_U U_t(t,s,\bx) = \cL U(t,s,\bx),\q\mbox{where} \q U_t:= U(t,\cd).
\eea
For any $s\in\dbT$ and $L>0$, denote $\dbF^s:=\{\cF^t_r\}_{r\in[s,T]}$ with $\cF^s_r:= \si(W_l-W_s,l\in[s,r])$. Let $\cT_s$ be the set of $\dbF^s$-stopping times, $\cT_s^+$ the subset of $\t\in \cT_s$ such that $\t>s$, a.s., $\cU_s^L$ the set of $\dbF^s$-progressively measurable processes on $[s, T]$ bounded by $L$,
and
\bel{def-M}
M^\th_r := \exp\Big(\int_s^r \th_l dW_l - {1\over 2} \int_s^r |\th_l|^2 dl\Big),\q  r\in [s, T],~ \th\in \cU^L_s.
\ee
Given $U\in C^0_b(\dbT^2_+\times \dbX)$ and $(t,s,\bx)\in \dbT^2_+\times \dbX$, denote
\bel{cA}\left.\ba{lll}
\displaystyle \underline \cA^L U(t,s,\bx) := \Big\{\phi\in C^{1,2}_+([s, T]\times \dbX; \dbR)\bigm|\exists\,\ch\in\cT_s^+\hb{ such that}\\
\displaystyle\qq\qq\qq\qq \phi(s,\bx)-U(t,s, \bx) = 0 =  \inf_{\th\in \cU_s^L} \inf_{\ch\ges \t\in \cT_s} \dbE\big[M^\th_\t [\phi - U_t](\t, \h X^{s,\bx, \t})\big]\Big\};\\
\displaystyle \overline \cA^L U(t,s,\bx) := \Big\{\phi\in C^{1,2}_+([s, T]\times \dbX; \dbR)\bigm|\exists\,\ch\in\cT_s^+\hb{ such that}\\
\displaystyle\qq\qq\qq\qq \phi(s,\bx)-U(t,s,\bx) = 0 =  \sup_{\th\in \cU_s^L} \sup_{\ch\ges \t\in \cT_s} \dbE\big[M^\th_\t [\phi-U_t](\t,\h X^{s,\bx, \t})\big]\Big\}.
\ea\right.
\ee
We note that, if $\phi\in\underline \cA^L U(t,s,\bx)$ with the corresponding $\ch\in\cT_s^+$, then for any $\th\in \cU_{s}^L$ and $\ch\ges\t\in \cT_s$, we have
\bel{rem-A}
M^\th_s [\phi - U_t](s,\bx)= 0 \les   \dbE\big[M^\th_\t [\phi - U_t](\t, \hat X^{s,\bx, \t})\big].
\ee

\ms

\begin{definition}\label{defn-viscosity}\rm
Let $U\in C^0_b(\dbT^2_+\times \dbX)$.
\begin{enumerate}[(i)]
\item
We say $U$ is an {\it $L$-viscosity subsolution} of PPDE \rf{UPPDE} if
\bel{subsolution}
\cL_U \phi(t, s, \bx) \ges 0\q\mbox{for any $(t,s,\bx)\in \dbT_+^2\times \dbX$ and any $\phi\in \underline \cA^L U(t,s,\bx)$}.
\ee

\item
We say $U$ is an {\it $L$-viscosity supersolution} of PPDE \rf{UPPDE} if
\bel{supersolution}
\cL_U \phi(t, s, \bx) \les 0\q\mbox{for any $(t,s,\bx)\in \dbT_+^2\times \dbX$ and any $\phi\in \overline \cA^L U(t,s,\bx)$}.
\ee

\item
We say $U$ is an {\it $L$-viscosity solution} of PPDE \rf{UPPDE} if it is both an $L$-viscosity subsolution and an $L$-viscosity supersolution.
Moreover, we say $U$ is a {\it viscosity solution} of PPDE \rf{UPPDE} if it is an $L$-viscosity solution for some $L>0$.
\end{enumerate}
\end{definition}

For consistency, we say $U$ is a {\it classical subsolution} (resp. {\it classical supersolution}) of PPDE \rf{UPPDE}
if $U_t(\cd)\in C^{1,2}_+([s, T]\times \dbX; \dbR)$ and satisfies
$$
\cL_U U_t(t,s,\bx) = \cL U(t,s,\bx)\ges  ~(\mbox{resp.} \les )~ 0.
$$
From now on, we let
\bel{L0}
\mbox{$L_0$ denote the Lipschitz constant of $f$ with respect to $z$.}
\ee

We first have the consistency result.

\begin{proposition}\label{prop-consistency} \sl
Assume  $U\in C^0_b(\dbT^2_+\times \dbX)$ and $U(t\,,\cd\,,\cd) \in C^{1,2}_+([t, T]\times \dbX)$.
Then $U$ is a classical subsolution of PPDE \rf{UPPDE} if and only if it is a viscosity subsolution of PPDE \rf{UPPDE}.
\end{proposition}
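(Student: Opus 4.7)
The plan is to prove the two implications separately. The direction "viscosity $\Rightarrow$ classical" is essentially trivial, achieved by plugging $\phi = U_t$ as the test function, while the converse relies on the functional It\^o formula combined with a Girsanov-type change of measure.

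For the easy direction, since $U_t := U(t,\cd,\cd) \in C^{1,2}_+([t,T]\times\dbX)$ by hypothesis, the choice $\phi := U_t$ is itself admissible in \rf{cA}: indeed $\phi - U_t \equiv 0$, so every expression in the defining infimum vanishes, whence $\phi\in\underline{\cA}^L U(t,s,\bx)$ for every $L>0$ and every $\ch\in\cT_s^+$. Applying the viscosity subsolution inequality \rf{subsolution} then yields $\cL_U U_t(t,s,\bx) \ges 0$, which by \rf{LUt} is exactly the classical subsolution inequality $\cL U(t,s,\bx)\ges 0$.

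For the converse (which I would establish as $L$-viscosity subsolution for any $L \ges L_0$), I argue by contradiction: fix $(t,s,\bx)$ and $\phi\in\underline{\cA}^L U(t,s,\bx)$ with associated $\ch\in\cT_s^+$, and assume $\cL_U\phi(t,s,\bx)<0$. Applying Proposition \ref{Ito} to each of $\phi(\cd,\h X^{s,\bx,\cd})$ and $U_t(\cd,\h X^{s,\bx,\cd})$ on $[s,T]$ and subtracting, and using the Lipschitz decomposition $f(t,r,\cd,Z^\phi_r) - f(t,r,\cd,Z^{U_t}_r) = \lambda_r(Z^\phi_r - Z^{U_t}_r)$ with $|\lambda|\les L_0$, $Z^\phi_r := \lan\pa_\bx\phi,\si^{r,\h X^{s,\bx,r}}\ran$, and $Z^{U_t}_r$ defined analogously, I obtain
\begin{align*}
d[\phi-U_t](r,\h X^{s,\bx,r}) &= [\cL_U\phi - \cL U](t,r,\h X^{s,\bx,r})\,dr \\
&\q -\lambda_r\lan\pa_\bx(\phi-U_t),\si^{r,\h X^{s,\bx,r}}\ran\,dr + \lan\pa_\bx(\phi-U_t),\si^{r,\h X^{s,\bx,r}}\ran\,dW_r.
\end{align*}
Setting $d\dbQ := M^\lambda_T\,d\dbP$ (a true probability by Novikov since $|\lambda|\les L_0$, and with $\lambda\in\cU_s^L$), the process $\wt W_r := W_r - \int_s^r \lambda_l\,dl$ is $\dbQ$-Brownian, and for any $\t\in\cT_s$ with $s\les\t\les\ch$ the resulting It\^o identity gives
\begin{equation*}
\dbE\big[M^\lambda_\t(\phi-U_t)(\t,\h X^{s,\bx,\t})\big] = \dbE^\dbQ\Big[\int_s^\t [\cL_U\phi - \cL U](t,r,\h X^{s,\bx,r})\,dr\Big].
\end{equation*}
Combining $\cL U\ges 0$ (classical subsolution) with the right-continuity of $\cL_U\phi(t,\cd,\cd)$ from Definition \ref{hatC12} and the sample-path continuity of $r\mapsto\h X^{s,\bx,r}$ at $r=s$ (with $\h X^{s,\bx,s}=\bx$), for $\d>0$ sufficiently small the integrand above is $\les \frac{1}{2}\cL_U\phi(t,s,\bx)<0$ on $[s,s+\d]$ with $\dbQ$-probability close to one. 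Taking $\t := (s+\d)\wedge\ch$, which exceeds $s$ a.s.\ since $\ch>s$ a.s., the right-hand side becomes strictly negative, contradicting the minimality condition \rf{rem-A}.

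The main obstacle is the local continuity step. Since the topology on $\dbX$ is uniform convergence on $[0,T]$, extracting a uniform negative upper bound on $\cL_U\phi(t,r,\h X^{s,\bx,r})$ over a small random neighborhood $[s,s+\d]$ requires a pathwise estimate controlling $\sup_{r\in[s,s+\d]}\|\h X^{s,\bx,r}-\bx\|$ in $\dbQ$-probability, which in turn reduces to a Burkholder-type bound on the auxiliary two-parameter process $\{\wt X^{s,\bx}_{r,\cd}\}_{r\ges s}$ from \rf{wtX}, transferred from $\dbP$ to $\dbQ$ via the equivalence $\dbQ\sim\dbP$. The remaining ingredients --- legitimacy of the Girsanov change of measure and the martingale property of the stochastic integral under $\dbQ$ --- follow routinely from $|\lambda|\les L_0$ and the polynomial-growth estimates built into $C^{1,2}_+$.
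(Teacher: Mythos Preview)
Your proof is correct and follows essentially the same route as the paper. The easy direction is identical; for the converse, the paper also applies the functional It\^o formula to both $\phi$ and $U_t$, subtracts, extracts a bounded process $\th$ (your $\lambda$) with $|\th|\les L_0$ from the Lipschitz property of $f$ in $z$, and then uses the exponential martingale $M^\th$ to obtain
\[
0 \les \dbE\Big[\int_s^\t M^\th_r\big[\cL_U\phi-\cL U\big](t,r,\h X^{s,\bx,r})\,dr\Big],
\]
from which the conclusion follows by sending $\t\downarrow s$ and invoking continuity. The only cosmetic differences are that the paper works directly with the product $M^\th_r\,\Delta Y_r$ under $\dbP$ rather than phrasing it as a Girsanov change of measure, and argues directly rather than by contradiction; your final paragraph worrying about the pathwise continuity of $r\mapsto \h X^{s,\bx,r}$ is justified caution, but the paper simply appeals to the continuity of $\cL_U\phi-\cL U$ together with the arbitrariness of $\t$ and leaves the localization implicit.
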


\begin{proof}
We first assume $U$ is an $L$-viscosity subsolution for some $L$.
Clearly $U_t \in \underline \cA^L U(t,s,\bx)$. Then $\cL U(t,s,\bx) = \cL_U U_t(t,s,\bx) \ges 0$,
which implies that $U$ is a classical subsoution.

On the other hand, assume $U$ is a classical subsolution.
For any $\phi\in \underline \cA^{L_0} U(t,s,\bx)$ with the corresponding $\ch\in \cT_s^+$, applying the functional It\^o formula we have
\bel{dphi}\ba{ll}
\ns\ds d\phi(r,\h X^{s,\bx,r})=\Big[\pa_r\phi+{1\over2}\lan\pa^2_{\bx\bx}\phi,(\si^{\cd},  \si^{\cd})\ran+\lan\pa_\bx\phi,b^{\cd}\ran\Big](r,\h X^{s,\bx,r}) dr
+\lan\pa_\bx\phi, \si^{\cd}\ran(r,\h X^{s,\bx, r}) dW_r\\
\ns\ds\q= \big[\cL_U\phi(t,\cd)-f\big(t,\cd,U(r,\cd),\lan\pa_\bx\phi,\si^\cd\ran\big)\big](r,\h X^{s,\bx, r})dr+\lan\pa_\bx\phi,\si^\cd\ran(r,\h X^{s,\bx,r})dW_r.\ea\ee
Similarly we have
$$
dU_t(r,\h X^{s,\bx,r})=\big[\cL U(t,\cd)-f\big(t,\cd,U(r,\cd),\lan \pa_\bx U_t, \si^\cd\ran\big)\big](r,\h X^{s,\bx,r})dr+\lan\pa_\bx U_t,\si^\cd\ran(r,\h X^{s,\bx,r}) dW_r.
$$
Denote
$$
\D Y_r:=[\phi-U_t](r,\h X^{s,\bx,r}),\q\D Z_r:=\lan\pa_\bx[\phi-U_t],\si^\cd\ran(r,\h X^{s,\bx,r}).
$$
Then
$$d[\D Y_r]=\big[\cL_U\phi-\cL U\big](t,r,\h X^{s,\bx,r})dr-\th_r\D Z_r dr+\D Z_rdW_r,$$
for some $|\th|\les L_0$. This implies that
$$
d(M^\th_r\D Y_r)=M^\th_r\big[\cL_U\phi-\cL U\big](t,r,\h X^{s,\bx,r})dr+M^\th_r\D Z_r dW_r,
$$
where $M^\th_r$ is defined by \rf{def-M}. Then, for any $\t\les\ch$, by \rf{cA} (or \rf{rem-A}) we have
$$
0\les\dbE\big[M^\th_\t\D Y_\t-M^\th_s\D Y_s\big]=\dbE\[\int_s^\t M^\th_r\big[\cL_U\phi- \cL U\big](t,r,\h X^{s,\bx, r})dr\].
$$
Since $\t\ges t$ is arbitrary and $\cL_U\phi - \cL U$ is continuous, using the fact that $\cL U(t,s,\bx)\ges 0$, we have
\beaa
0\les \big[\cL_U\phi - \cL U\big](t, s, \bx) \les \cL_U\phi(t, s,\bx),
\eeaa
implying the viscosity subsolution property.
\end{proof}

Next we have the following existence of the viscosity solutions to PPDE \rf{UPPDE}.

\begin{theorem}\label{thm-existence} \sl
Under {\rm Assumption \ref{assum-viscosity}}, the function  $U$ defined  by \rf{Uts} is an $L_0$-viscosity solution of PPDE \rf{UPPDE}.
\end{theorem}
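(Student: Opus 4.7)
The plan is to prove the subsolution and supersolution inequalities separately at each $(t,s,\bx)\in\dbT^2_+\times\dbX$, the two cases being symmetric; the terminal condition $U(t,T,\bx)=g(t,\bx)$ is immediate from \rf{Uts}. I fix a test function $\phi\in\underline\cA^{L_0}U(t,s,\bx)$ with associated $\ch\in\cT_s^+$ and argue by contradiction, assuming $\cL_U\phi(t,s,\bx)<0$. The key mechanism is to avoid differentiating $U_t$: one exploits the flow identities $\wt Y^{t,s,\bx}_r=U(t,r,\h X^{s,\bx,r})$ and $Y^{s,\bx}_r=U(r,r,\h X^{s,\bx,r})$ from \rf{flowY3}, so the It\^o decomposition of $U_t(r,\h X^{s,\bx,r})$ is read off directly from the BSDE \rf{BSDEtsth} without any smoothness assumption on $U$.

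The core computation goes as follows. Applying the functional It\^o formula (Proposition \ref{Ito}) to $\phi(\cd,\h X^{s,\bx,\cd})$ and regrouping via \rf{LUphi},
$$d\phi(r,\h X^{s,\bx,r})=\big[\cL_U\phi(t,r,\h X^{s,\bx,r})-f(t,r,\h X^{s,\bx,r},U(r,r,\h X^{s,\bx,r}),\alpha^\phi_r)\big]dr+\alpha^\phi_r\,dW_r,$$
where $\alpha^\phi_r:=\lan\pa_\bx\phi(r,\h X^{s,\bx,r}),\si^{r,\h X^{s,\bx,r}}\ran$. Progressive measurability gives $f(t,r,X^{s,\bx},\cd,\cd)=f(t,r,\h X^{s,\bx,r},\cd,\cd)$, so the BSDE \rf{BSDEtsth} combined with the flow identities yields
$$dU_t(r,\h X^{s,\bx,r})=-f\big(t,r,\h X^{s,\bx,r},U(r,r,\h X^{s,\bx,r}),\wt Z^{t,s,\bx}_r\big)dr+\wt Z^{t,s,\bx}_r\,dW_r.$$
Subtracting, the $y$-arguments inside $f$ match exactly, so only a linearization in $z$ remains. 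Setting $\D Y_r:=[\phi-U_t](r,\h X^{s,\bx,r})$ and $\alpha_r:=\alpha^\phi_r-\wt Z^{t,s,\bx}_r$, the $L_0$-Lipschitz continuity of $f$ in $z$ produces an $\dbF^s$-progressively measurable process $\th$ with $|\th|\les L_0$, hence $\th\in\cU_s^{L_0}$, such that
$$d\D Y_r=\big[\cL_U\phi(t,r,\h X^{s,\bx,r})-\th_r\alpha_r\big]dr+\alpha_r\,dW_r.$$
A Girsanov twist by $M^\th$ absorbs the $\th_r\alpha_r$ drift, and taking expectations up to any $\t\in\cT_s$ with $\t\les\ch$, together with $\D Y_s=0$, yields the fundamental identity
$$\dbE\big[M^\th_\t[\phi-U_t](\t,\h X^{s,\bx,\t})\big]=\dbE\int_s^\t M^\th_r\,\cL_U\phi(t,r,\h X^{s,\bx,r})\,dr.$$

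To close the contradiction, observe that since $\phi\in C^{1,2}_+$ (with the right time-continuity built into Definition \ref{hatC12}(ii)) and $U$ is continuous by Lemma \ref{lem-cont}, the integrand above is right-continuous at $r=s$ with value $\cL_U\phi(t,s,\bx)<0$. Using $\h X^{s,\bx,s}=\bx$ together with the pathwise continuity of $r\mapsto\h X^{s,\bx,r}$ from Subsection \ref{sect-Fflow}, there is a stopping time $\t\in\cT_s^+$ with $\t\les\ch$ on which $\cL_U\phi(t,r,\h X^{s,\bx,r})\les\tfrac12\cL_U\phi(t,s,\bx)$; concretely, $\t:=\inf\{r>s:\|\h X^{s,\bx,r}-\bx\|\ges\delta\}\land(s+\delta)\land\ch$ works for sufficiently small deterministic $\delta>0$. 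The fundamental identity then forces $\dbE[M^\th_\t[\phi-U_t](\t,\h X^{s,\bx,\t})]<0$, contradicting \rf{rem-A}. Thus $\cL_U\phi(t,s,\bx)\ges0$, which is the desired $L_0$-subsolution property. The supersolution inequality follows from the same calculation applied to $\phi\in\overline\cA^{L_0}U(t,s,\bx)$, with the sign of the strict inequality reversed and the $\sup$ characterization in \rf{cA} invoked in place of $\inf$.

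The one clean feature of this argument is that the flow identity makes the $y$-arguments of $f$ coincide exactly after subtraction, so no linearization in $y$ is performed and the monotonicity in Assumption \ref{assum-viscosity}(ii) plays no role in existence---it is reserved for the comparison principle. The only genuine technical point, and thus the main obstacle, is ensuring that $\t>s$ almost surely while preserving the strict sign of $\cL_U\phi$ on $[s,\t]$; this rests on the pathwise continuity of $\h X^{s,\bx,\cd}$ at $r=s$ together with the one-sided joint continuity of $\pa_s\phi,\pa_\bx\phi,\pa^2_{\bx\bx}\phi$ supplied by Definition \ref{hatC12}.
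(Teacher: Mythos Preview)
Your proof is correct and follows essentially the same approach as the paper: apply the functional It\^o formula to $\phi$, read off the dynamics of $U_t(r,\h X^{s,\bx,r})=\wt Y^{t,s,\bx}_r$ directly from the BSDE \rf{BSDEtsth} via the flow identity \rf{flowY3}, linearize the $f$-difference in $z$ only (producing $\th$ with $|\th|\les L_0$), and use the exponential martingale $M^\th$ to conclude from \rf{rem-A} that $\dbE\big[\int_s^\t M^\th_r\,\cL_U\phi(t,r,\h X^{s,\bx,r})dr\big]\ges0$ for all $\t\les\ch$. The only cosmetic difference is that the paper passes directly from this inequality to $\cL_U\phi(t,s,\bx)\ges0$ by sending $\t\downarrow s$, whereas you phrase the same limiting step as a contradiction argument with an explicit hitting-time construction; both are equivalent and your observation that the monotonicity of $f$ in $y$ plays no role here is exactly right.
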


\begin{proof} Without loss of generality, we shall only verify the viscosity subsolution property. Fix $(t,s,\bx)\in \dbT^2_+\times \dbX$. Recall \rf{dphi} and \rf{BSDEtsth}, and denote
$$\D Y_r:=\phi(r,\h X^{s,\bx,r})-\wt Y^{t,s,\bx}_r,\q
\D Z_r:=\lan\pa_\bx\phi,\si^{\cd}\ran(r,\h X^{s,\bx,r})-\wt Z^{t,s,\bx}_r.$$
Then
$$\ba{ll}
\ns\ds d(\D Y_r)=\Big\{\big[\cL_U\phi(t,\cd)- f\big(t,\cd\,,U(r,\cd),\lan\pa_\bx\phi, \si^{\cd}\ran\big)\big](r,\h X^{s,\bx,r})\\
\ns\ds\qq\qq\q+f(t,r,X_\cd^{s,\bx},Y^{s,\bx}_r,\wt Z^{t, s,\bx}_r)\Big\}dr+\D Z_rdW_r.\ea$$
Recall \rf{flowX3} and \rf{flowY3}, the above implies
$$d(\D Y_r)=\cL_U\phi(t,r,\h X^{s,\bx,r})dr-\th_r\D Z_rdr+\D Z_rdW_r,$$
for some $|\th|\les L_0$.  Then, for the $M^\th_r$ defined by \rf{def-M},
$$d(M^\th_r\D Y_r)=M^\th_r\cL_U\phi(t,r,\h X^{s,\bx,r})dr+M^\th_r\D Z_rdW_r.$$
 Recall \rf{flowY3} again that $\D Y_r=[\phi-U_t](r, \h X^{s,\bx,r})$. Then, for any $\t\les\ch$,  by \rf{cA} (or \rf{rem-A}) we have
$$0\les\dbE[M^\th_\t\D Y_\t-M^\th_s\D Y_s]=\dbE\Big[\int_s^\t M^\th_r\cL_U\phi(t,r,\h X^{s,\bx,r})dr\Big].$$
Since $\t\ges t$ is arbitrary and $\cL_U\phi$ is continuous, we obtain $\cL_U\phi(t,s,\bx) \ges0$.
\end{proof}

The key for the viscosity solution theory is the following partial comparison principle.

\begin{theorem}\label{thm-partial} \sl
Let {\rm Assumption \ref{assum-viscosity}} hold and $U_1$ (resp. $U_2$) be a viscosity subsolution (resp. supersolution)  of PPDE \rf{UPPDE}.
Assume $U_1(t, T, \bx)\les U_2(t, T, \bx)$ for all $(t,\bx)\in \L$. If one of $U_1, U_2$ is smooth, then $U_1 \les U_2$.
\end{theorem}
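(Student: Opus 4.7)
The plan is to assume without loss of generality that $U_2$ is smooth (the case $U_1$ smooth is symmetric), and to propagate the terminal comparison backwards by a Gronwall argument on the diagonal supremum
$$G(s):=\sup_{t\in[0,s],\,\bx\in\dbX}(U_1-U_2)_+(t,s,\bx),\qq s\in\dbT.$$
Because the PPDE \rf{UPPDE} is non-local in $t$ through the diagonal value $U(s,s,\bx)$, a one-shot pointwise comparison is not available; I will instead derive an inequality of the form $G(s)\le C\int_s^T G(r)\,dr$ which, together with $G(T)\le 0$ (the assumed terminal bound), forces $G\equiv 0$ by backward Gronwall, and hence $U_1\le U_2$.

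For fixed $t$, Proposition \ref{prop-consistency} yields $\cL U_2(t,s,\bx)\le 0$. Applying the functional It\^o formula of Proposition \ref{Ito} to $U_2(t,\cdot,\h X^\cdot)$ on $[s,T]$ and using $\pa_sU_2+\tfrac12\lan\pa^2_{\bx\bx}U_2,(\si,\si)\ran+\lan\pa_\bx U_2,b\ran=\cL U_2-f\le -f$, I obtain, with $Z^2_r:=\lan\pa_\bx U_2(t,r,\h X^r),\si\ran$, the a.s.\ BSDE super-inequality
$$U_2(t,s,\bx)\ge U_2(t,T,\h X^T)+\int_s^T f\big(t,r,\h X^r,U_2(r,r,\h X^r),Z^2_r\big)\,dr-\int_s^T Z^2_r\,dW_r.$$
The matching sub-inequality for $U_1$, which is the heart of the proof, is
$$U_1(t,s,\bx)\le U_1(t,T,\h X^T)+\int_s^T f\big(t,r,\h X^r,U_1(r,r,\h X^r),Z^1_r\big)\,dr-\int_s^T Z^1_r\,dW_r,\q\as$$
for some admissible $Z^1$, derived from the viscosity subsolution property of $U_1$ alone. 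I plan to establish it by contradiction, following the EKTZ strategy adapted to the Volterra setting: if the inequality failed on a set of positive probability, a mollification of the obstructing trajectory together with an affine shift in $s$ and the Girsanov weights $M^\th_\cdot$ entering the definition \rf{cA} would produce a test function $\phi\in\underline\cA^{L_0}U_1(t,s,\bx)$ with $\cL_{U_1}\phi(t,s,\bx)<0$, contradicting \rf{subsolution}. The optimal-stopping formulation of $\underline\cA^{L_0}$ is precisely what accommodates the lack of local compactness of $\dbX$, since no pointwise minimizer of $\phi-U_1$ need exist.

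Subtracting the two inequalities and setting $\Delta Y_r:=[U_1-U_2](t,r,\h X^r)$, $\Delta Z_r:=Z^1_r-Z^2_r$, the Lipschitz continuity of $f$ in $z$ (constant $L_0$) combined with monotonicity and Lipschitz continuity in $y$ (Assumption \ref{assum-viscosity}(ii)) yield
$$\Delta Y_s\le [U_1-U_2](t,T,\h X^T)+\int_s^T\!\big(L_0|\Delta Z_r|+C\,[U_1-U_2]_+(r,r,\h X^r)\big)dr-\int_s^T\!\Delta Z_r\,dW_r.$$
The terminal term is $\le 0$ by hypothesis and $[U_1-U_2]_+(r,r,\h X^r)\le G(r)$; a Girsanov change of measure absorbing $L_0|\Delta Z|$, together with standard BSDE a priori estimates, gives $[U_1-U_2]_+(t,s,\bx)\le C\int_s^T G(r)\,dr$. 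Taking the supremum over $t\in[0,s]$ and $\bx\in\dbX$ produces $G(s)\le C\int_s^T G(r)\,dr$, and backward Gronwall closes the argument. The main obstacle is the step above of converting the abstract optimal-stopping characterization of the viscosity subsolution into an a.s.\ BSDE inequality for $U_1$: this requires careful use of the auxiliary process $\h X=X\oplus_\cdot\wt X_\cdot$ and the Volterra functional It\^o formula \rf{Ito-formula} in place of the semimartingale It\^o formula employed in EKTZ. The monotonicity of $f$ in $y$ is also indispensable, for without it the $y$-linearization contributes a term of uncontrolled sign and the Gronwall closure on $G$ breaks down--this is exactly why the ``proper'' condition of Assumption \ref{assum-viscosity}(ii) is imposed.
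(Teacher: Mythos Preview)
Your proposal has a genuine gap at the step you yourself flag as ``the heart of the proof'': the claimed a.s.\ BSDE sub-inequality
\[
U_1(t,s,\bx)\le U_1(t,T,\h X^T)+\int_s^T f\big(t,r,\h X^r,U_1(r,r,\h X^r),Z^1_r\big)\,dr-\int_s^T Z^1_r\,dW_r
\]
for \emph{some} admissible $Z^1$, derived from the viscosity subsolution property alone. This cannot be obtained. A viscosity subsolution carries information only at points where a smooth test function touches it from above in the optimal-stopping sense \rf{cA}; it does not produce a process $Z^1$ or an a.s.\ pathwise inequality. Your plan to ``mollify the obstructing trajectory'' is not a viable construction: $U_1$ is not smooth, a mollification of it need not lie in $C^{1,2}_+(\L)$, and even if it did there is no mechanism forcing it to touch $U_1$ from above at the right point. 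The EKTZ strategy does \emph{not} convert the viscosity property into a BSDE inequality; it goes the other way, using the smooth function to build the test function.

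The paper's argument does exactly that. Working first on $[T-\d,T]$ with $\d=1/(8L)$, it assumes $c:=\sup[U_1-U_2]>0$ for contradiction, fixes $t_0$ near the supremum, and forms the penalized difference $V(s,\bx)=[U_1-U_2](t_0,s,\bx)-\tfrac{c}{4(T-s_0)}(T-s)$. The Snell envelope $\psi(s,\bx)=\sup_{\t,\th}\dbE[M^\th_\t V(\t,\h X^{s,\bx,\t})]$ solves a reflected BSDE, and the reflection yields a point $(s^*,\bx^*)$ with $s^*<T$ where $\psi=V$. The test function is then built from the \emph{smooth} $U_2$:
\[
\phi(s,\bx):=U_2(t_0,s,\bx)+\tfrac{c}{4(T-s_0)}(T-s)+\psi(s^*,\bx^*),
\]
so $\phi\in\underline\cA^{L_0}U_1(t_0,s^*,\bx^*)$ by construction. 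The viscosity subsolution inequality $\cL_{U_1}\phi\ge 0$ at $(t_0,s^*,\bx^*)$, combined with $\cL U_2\le 0$ and the monotonicity of $f$ in $y$ applied to $U_1(s^*,s^*,\bx^*)\le U_2(s^*,s^*,\bx^*)+c$, gives $0\le Lc-c/(4\d)$, a contradiction for $\d=1/(8L)$. One then iterates backward on successive intervals of length $\d$. Thus the paper replaces your global Gronwall on $G(s)$ by a local contradiction argument plus backward induction; both hinge on the monotonicity in $y$ for the same reason, but the paper never needs---and cannot obtain---a BSDE inequality for the non-smooth $U_1$.
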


\begin{proof}
Without loss of generality, we assume that $U_2$ is a classical supersolution.
Fix $\d>0$ which will be specified later.
We shall first prove $U_1(t,s,\bx) \les U_2(t,s,\bx)$ whenever $s\in [T-\d, T]$.
Assume by contradiction that
\bel{c}
c:= \sup_{s\in [T-\d, T],\, t\in [0, s],\, \bx\in \dbX} [U_1-U_2](t,s,\bx) >0.
\ee
Then there exists desired $(t_0,s_0,\bx^0)$ such that $[U_1-U_2](t_0,s_0,\bx^0)\ges{c\over 2}>0$. Fix $t_0$ and denote
\bel{psiV}
V(s, \bx):= [U_1-U_2](t_0, s, \bx) - {c\over 4(T-s_0)} [T-s],
\q \psi(s, \bx) := \sup_{\t\in \cT_s} \sup_{\th\in \cU^{L_0}_s} \dbE[M^\th_\t V(\t, \h X^{s, \bx, \t})],
\ee
where $\h X^{s,\bx,\t}$ and $M^\th_\t$ are defined by \rf{flowX3} and \rf{def-M}, respectively. Similar to Lemma \ref{lem-cont}, $\psi$ is bounded and uniformly continuous in $(s,\bx)$.
Moreover, by standard BSDE results (see \cite{Zhang-2017}, for example),
$\cY_s := \psi(s, \h X^{s_0, \bx^0, s})$  is the solution to the following reflected BSDE:
\bel{RBSDE}\left.\1n\ba{lll}
\ds\cY_s=V(T,\h X^{s_0,\bx^0,T})+L_0\int_s^T|\cZ_r|dr-\int_s^T\cZ_r dW_r+K_T-K_s;\\
\ns\ds\cY_s\ges V(s,\h X^{s_0,\bx^0,s}),\q\big[\cY_s-V(s,\h X^{s_0,\bx^0,s})\big]dK_s=0.
\ea\right.\ee
Denote
\bel{tau*}\tau^* := \inf\big\{s\ges s_0: \cY_s =   V(s, \h X^{s_0, \bx^0, s})\big\}.\ee
Then $dK_s = 0$ for $s\in [s_0, \t^*]$. From \rf{psiV}, we note
\begin{align}
\label{Y-s0}
& \cY_{s_0} \ges  V(s_0, \h X^{s_0, \bx^0, s_0}) =  V(s_0,  \bx^0) = [U_1-U_2](t_0,s_0, \bx^0) - {c\over 4} \ges {c\over 4}>0,\\
\label{Y-T}& \cY_T = [U_1-U_2](t_0, T, \h X^{s_0, \bx^0, T}) \les 0.
\end{align}
Then it is clear that $\dbP(\t^* < T) > 0$. Indeed, if $\dbP(\t^* < T)= 0$, we have $dK_s \equiv 0$ and the reflected BSDE \rf{RBSDE} becomes a standard BSDE.
Then the terminal condition \rf{Y-T} implies $\cY_{s_0}\les 0$, which contradicts \rf{Y-s0}.
Therefore, there exists $\omega^*\in \Omega$ such that
\beaa
\t^*(\omega^*) < T \q\mbox{and}\q \psi(s^*, \bx^*) =  V(s^*, \bx^*),\q\mbox{where}\q s^*:=  \t^*(\omega^*),\q \bx^*:= \h X^{s_0, \bx^0, s^*}(\omega^*).
\eeaa
Now define
\beaa
\phi(s, \bx):= U_2(t_0, s, \bx) + {c\over 4(T-s_0)} (T-s) + \psi(s^*, \bx^*).
\eeaa
Then $\phi\in C^{1,2}_+([s_0, T]\times \dbX)$, $\phi(s^*, \bx^*) = U_1(t_0, s^*, \bx^*)$, and, for any $\th\in \cU^{L_0}_{s^*}$ and any $\t\in \cT_{s^*}$,
\bel{phi-A}
\dbE\Big[M^\th_\t \big[\phi - U_1(t_0,\cd)\big](s, \h X^{s^*, \bx^*, s})\Big] = \psi(s^*, \bx^*) - \dbE\big[M^\th_\t  V(s, \h X^{s^*, \bx^*, s})\big] \ges 0.
\ee
That is, $\phi \in \underline \cA^{L_0}U_1(t_0, s^*, \bx^*)$, and thus by the viscosity subsolution property of $U_1$ we have
\bel{U1sub}
\begin{aligned}
0&\ \les\ \cL_{U_1} \phi(t_0, s^*, \bx^*)\\
&\ =\ \Big[ \pa_s \phi + {1\over 2} \langle \pa^2_{\bx\bx} \phi, (\si^{\cd}, \si^{\cd})\rangle
+ \langle \pa_\bx \phi, b^{\cd}\rangle + f\big(t_0, \cd, U_1(s^*,\cd), \langle \pa_\bx \phi, \si^{\cd}\rangle\big)\Big](s^*, \bx^*)\\
&\ =\ -{c\over 4(T-s_0)}+\Big[ \pa_s U_2(t_0,\cd) + {1\over 2} \langle \pa^2_{\bx\bx} U_2(t_0,\cd), (\si^{\cd}, \si^{\cd})\rangle
+ \langle \pa_\bx U_2(t_0,\cd), b^{\cd}\rangle \\
&\q\q+ f\big(t_0, \cd, U_1(s^*,\cd), \langle \pa_\bx U_2(t_0,\cd), \si^{\cd}\rangle\big)\Big](s^*, \bx^*).
\end{aligned}
\ee
Recall \rf{c} we have
\beaa
U_1(s^*, s^*, \bx^*) \les  U_2(s^*, s^*, \bx^*) +c.
\eeaa
Let $L$ denote the Lipschitz constant of $f$ with respect to $y$.
Then, by Assumption \ref{assum-viscosity} (ii) we have
   \beaa
  0&\les&  -{c\over 4\d}+\Big[ \pa_s U_2(t_0,\cd) + {1\over 2} \langle \pa^2_{\bx\bx} U_2(t_0,\cd), (\si^{\cd}, \si^{\cd})\rangle + \langle \pa_\bx U_2(t_0,\cd), b^{\cd}\rangle \\
   &&+ f\big(t_0, \cd, U_2(s^*,\cd), \langle \pa_\bx U_2(t_0,\cd), \si^{\cd}\rangle\big)\Big](s^*, \bx^*) + Lc\\
   &=&\cL U_2(t_0, s^*, \bx^*) - {c\over 4\d}  + Lc \les Lc - {c\over 4\d},
  \eeaa
  thanks to the supersolution property of $U_2$. Set $\d := {1\over 8L}$, we obtain the desired contradiction, and hence $U_1(t,s,\bx) \les U_2(t,s,\bx)$ whenever $s\in [T-\d, T]$.

\ss

Now consider the PPDE \rf{UPPDE} on $[0, T-\d]$. Since $U_1(t, T-\d, \bx) \les U_2(t, T-\d, \bx)$ for all $(t,\bx)\in [0, T-\d]\times \dbX$, by the same arguments as above we can show that $U_1(t, s, \bx) \les U_2(t, s, \bx)$ whenever $s\in [T-2\d, T-\d]$. Repeat the arguments backwardly in time, we show that $U_1 \les U_2$ over the whole space.
\end{proof}

Our final result in this section is the following comparison principle.

\begin{theorem}\label{thm-comparison} \sl
Let $b, \si$ satisfy the requirements in {\rm Assumption \ref{assum-classical}}
and $f, g$ satisfy the requirements in {\rm Assumption \ref{assum-viscosity}}.
Let $U_1$ (resp. $U_2$) be a viscosity subsolution (resp. supersolution)  of PPDE \rf{UPPDE}.
Assume $U_1(t,T,\bx)\les g(t,\bx)\les  U_2(t,T,\bx)$ for all $(t,\bx)\in\L$, then $U_1\les U_2$ on $\dbT^2_+\times \dbX$.
\end{theorem}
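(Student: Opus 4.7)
The plan is to reduce the general comparison principle to the partial comparison of Theorem \ref{thm-partial} by sandwiching $U_1$ and $U_2$ between classical solutions of PPDEs with smoothly approximated drivers and terminals. Concretely, let $U(t,s,\bx):=\wt Y^{t,s,\bx}_s$ be the FBSVIE-representation function from \rf{Uts}; by Theorem \ref{thm-existence}, $U$ is a viscosity solution of \rf{UPPDE}. I aim to establish $U_1\les U\les U_2$, from which the conclusion follows; I focus on the upper inequality $U_1 \les U$, the lower one being entirely analogous.

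For each $\e>0$, I construct a pair $(f^\e,g^\e)$ satisfying: (i) $f^\e\ges f$ and $g^\e\ges g$ pointwise, with $(f^\e,g^\e)\to(f,g)$ uniformly; (ii) $f^\e$ remains nondecreasing in $y$ and globally Lipschitz in $(y,z)$; (iii) $(f^\e,g^\e)$ fulfill the smoothness hypotheses of Assumption \ref{assum-classical}. Such $(f^\e, g^\e)$ can be built by a cylindrical-projection-plus-mollification scheme: approximate the path dependence by sampling $\bx$ on a finite grid (truncated at $r$ to preserve adaptedness), mollify in the resulting Euclidean variables together with $(y,z)$ using a symmetric nonnegative kernel, and finally add a vanishing nonnegative correction to enforce the strict ordering. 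With $(b,\si)$ already meeting Assumption \ref{assum-classical} by hypothesis, Theorem \ref{thm-classical} then yields a classical solution $U^\e$ of PPDE \rf{UPPDE} with $(f^\e,g^\e)$ in place of $(f,g)$. Since $f^\e\ges f$, one computes
\begin{equation*}
\cL U^\e(t,s,\bx) = [f-f^\e]\big(t,s,\bx,U^\e(s,s,\bx),\lan\pa_\bx U^\e(t,s,\bx),\si^{s,\bx}\ran\big) \les 0,
\end{equation*}
and $U^\e(t,T,\bx)=g^\e(t,\bx)\ges U_1(t,T,\bx)$, so $U^\e$ is a classical supersolution of the original PPDE dominating $U_1$ at the terminal time. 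Theorem \ref{thm-partial} then gives $U_1\les U^\e$ on $\dbT^2_+\times\dbX$.

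Passing $\e\to 0$, the FBSVIE representation $U^\e(t,s,\bx)=\wt Y^{t,s,\bx,\e}_s$, combined with standard BSVIE stability in the coefficients (Proposition \ref{prop-BSVIE2}) and the uniform convergence $(f^\e,g^\e)\to(f,g)$, forces $U^\e\to U$ pointwise, whence $U_1\les U$. The symmetric construction with $f_\e\les f$ and $g_\e\les g$ produces classical subsolutions $U_\e$ with $U_\e\les U_2$ (again by Theorem \ref{thm-partial}) converging to $U$, yielding $U\les U_2$. Combining the two gives $U_1\les U\les U_2$. The main obstacle is the smooth approximation step: producing $(f^\e,g^\e)$ whose second Fr\'echet derivatives are bounded bilinear operators on $\h\dbX\times\h\dbX$ with the Lipschitz property demanded by Definition \ref{hatC12}, while simultaneously preserving the monotonicity of $f^\e$ in $y$, the progressive measurability in the second time variable, and the pointwise ordering $f^\e\ges f$, $g^\e\ges g$. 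Mollification in the auxiliary Euclidean variables retains monotonicity in $y$ and the additive correction secures the ordering, but verifying the path-space smoothness hypotheses of Definition \ref{hatC12} for the resulting $(f^\e,g^\e)$ is the most delicate point; once it is in hand, the remaining steps reduce to the partial comparison principle and standard stability estimates for FBSVIEs.
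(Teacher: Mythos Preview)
Your overall strategy---sandwich $U_1$ and $U_2$ between classical solutions $U^\e$ built from smooth approximants $(f^\e,g^\e)$, then invoke Theorem~\ref{thm-partial} and pass to the limit---is natural, and the computation showing that $U^\e$ is a classical supersolution of the \emph{original} PPDE whenever $f^\e\ges f$ is correct. The difficulty is exactly where you flag it: the construction of $(f^\e,g^\e)$.

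The gap is that no cylindrical-projection-plus-mollification scheme can simultaneously deliver (a) the path-space smoothness of Assumption~\ref{assum-classical}, (b) the pointwise ordering $f^\e\ges f$, $g^\e\ges g$, and (c) convergence $f^\e\to f$, $g^\e\to g$. Sampling $\bx$ on a mesh of size $1/n$ produces an error of order $OSC_{1/n}(\bx):=\sup_{|r-r'|\les 1/n}|\bx_r-\bx_{r'}|$, which is \emph{not} uniform over $\dbX$ and \emph{not} Fr\'echet differentiable in $\bx$. Consequently, any ``nonnegative correction'' large enough to enforce $f^\e\ges f$ for every path must either blow up (a constant correction would have to dominate $\sup_{\bx}OSC_{1/n}(\bx)=+\infty$) or depend on $\bx$ through a non-smooth functional like $OSC_{1/n}(\bx)$, destroying the regularity you need to apply Theorem~\ref{thm-classical}. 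This is precisely the obstruction the paper identifies: ``since $\bx$ is a path, the standard mollification does not work and the approximations may not be uniform in terms of~$\bx$. In particular, we may not have $U_1(t,T,\bx)\les g_n(t,\bx)$ for all $(t,\bx)\in\L$.''

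The paper's remedy is to abandon the ordering $f_n\ges f$, $g_n\ges g$ altogether. It mollifies to obtain $(f_n,g_n)$ satisfying only Assumption~\ref{assum-classical} and the non-uniform bound \rf{fgn}, and then---rather than citing Theorem~\ref{thm-partial}---it \emph{reruns} the contradiction argument of the partial comparison proof against the classical solution $U_n$. The key observation is that the approximation error enters only through $OSC_{1/n}(\h X^{s_0,\bx^0,\cd})$ evaluated along the process $\h X$, and this quantity is small in $L^2$ (hence in probability) uniformly in $n$ large, see \rf{UUnest}--\rf{Deltan}. One then selects $\omega^*_n$ in the good event $\{\t^*_n<T\}\cap\{\D_n\les\e\}$, which has positive probability by \rf{tau*n}, and derives the same contradiction $Lc\les C\e$ as in Theorem~\ref{thm-partial}. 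In short: the paper trades your global ordering hypothesis for a probabilistic control of the mollification error along trajectories, which is achievable, whereas your pointwise ordering on all of $\dbX$ is not.
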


\begin{proof}
Without loss of generality, we shall assume $U_1(t,T,\bx) \les g(t,\bx)$ and prove only $U_1\les U$, where $U$ is defined by \rf{Uts}. We shall approximate $(f,g)$ by
$(f_n,g_n)$ which satisfy Assumption \ref{assum-classical}.
However, since $\bx$ is a path,  the standard mollification does not work and the approximations may not be uniform in terms of $\bx$. In particular, we may not have $U_1(t,T,\bx)\les g_n(t,\bx)$ for all $(t,\bx)\in\L$.
Therefore, instead of directly applying the partial comparison principle, we will follow its arguments.
As in Theorem \ref{thm-partial}, it suffices to prove $U_1(t,s,\bx) \les U(t,s, \bx)$ for $s\in [T-\d, T]$, where $\d:= {1\over 8L}$.
Assume by contradiction that, for some $s_0\in [T-\d, T]$, $t_0 \les s_0$, and $\bx^0\in \dbX$,
\bel{c2}
c:= \sup_{s\in [T-\d, T], t\in [0, s], \bx\in \dbX} [U_1-U](t,s,\bx) >0 \q\mbox{and}\q [U_1-U](t_0,s_0,\bx^0) \ges {c\over 2}.
\ee

We now mollify $(f, g)$. By first discretizing $\bx\in \dbX$,
one can easily construct $f_n, g_n$ such that, for each $n$, $f_n, g_n$ satisfy Assumption \ref{assum-classical}, and
\bea
\label{fgn}
&\displaystyle \sup_{(t, s)\in \dbT_+^2}\sup_{y,z} \big|[f_n-f](t,s, \bx, y, z)\big| + \sup_{t\in \dbT} \big|[g_n-g](t,T, \bx)\big|
\les C\Big[{1\over n} +  OSC_{1\over n}(\bx)\Big],\\
&\mbox{where}\q OSC_\d(\bx):= \sup_{|t-s|\les \d} |\bx_t - \bx_s|.\nonumber
\eea
By Theorem \ref{thm-classical}, the PPDE \rf{UPPDE} with coefficients $(b,\si, f_n, g_n)$ has a classical solution $U_n$.
As in \rf{psiV}, fix $t_0$ and denote
\bel{psiVn}
V_n(s, \bx):= [U_1-U_n](t_0, s, \bx) - {c\over 4(T-s_0)} [T-s],
\q \psi_n(s, \bx) := \sup_{\t\in \cT_s} \sup_{\th\in \cU^{L_0}_s} \dbE[M^\th_\t V_n(\t, \h X^{s, \bx, \t})].
\ee
Denote further $\cY^n_s := \psi_n(s, \h X^{s_0, \bx^0, s})$. Then \rf{RBSDE} and \rf{tau*} become:
\bel{RBSDEn}
\left.\ba{lll}
\displaystyle \cY^n_s =  V_n(T, \h X^{s_0, \bx^0, T}) + L_0 \int_s^T |\cZ^n_r| dr - \int_s^T \cZ^n_r dW_r + K^n_T - K^n_s;\\
\displaystyle \cY^n_s \ges    V_n(s, \h X^{s_0, \bx^0, s}),\q \big[\cY^n_s -  V_n(s, \h X^{s_0, \bx^0, s})\big] dK^n_s = 0;\\
\displaystyle\tau^*_n := \inf\big\{s\ges s_0: \cY^n_s =   V_n(s, \h X^{s_0, \bx^0, s})\big\}.
\ea\right.
\ee
Note that
\beaa
& \cY^n_{s_0} \ges  V_n(s_0, \h X^{s_0, \bx^0, s_0}) =  V_n(s_0,  \bx^0) = [U_1-U_n](s_0, \bx^0) - {c\over 4} \ges {c\over 4} - [U-U_n](s_0, \bx^0),\\
& \cY^n_T = [U_1-U_n](t_0, T, \h X^{s_0, \bx^0, T}) \les [U-U_n] (t_0, T, \h X^{s_0, \bx^0, T}).
\eeaa
By \rf{fgn} and noting that $\bx\in \dbX$, one can easily show that
\bel{UUnest}
\lim_{n\to \infty} \dbE\Big[|OSC_{1\over n}( \h X^{s_0, \bx^0, \t})|^2+ |U-U_n|^2(t_0, \t,  \h X^{s_0, \bx^0, \t})\Big] =0,\q\mbox{for any}~\t\in \cT_{s_0}.
\ee
In particular, for any $\e>0$ small, this implies that,  for $n$ large enough,
\beaa
\cY^n_{s_0} \ges {c\over 8}>0\q\mbox{ and}\q  \dbE\big[ |\cY^n_T|^2{\bf 1}_{\{\cY^n_T\ges 0\}}\big] \les \e.
\eeaa
Denote $\th^n_s := L_0 {\rm sign}(\cZ^n_s)$.
Note that  $dK^n_s = 0$ for $s\in [s_0, \t^*_n]$, then $\cY^n_{s_0} = \dbE\big[M^{\th^n}_{\t^*_n} \cY^n_{\t^*_n}\big]$. Thus
\beaa
{c\over 8}&\les& \cY^n_{s_0} = \dbE\Big[M^{\th^n}_{\t^*_n} \cY^n_{\t^*_n}{\bf 1}_{\{\t^*_n<T\}} + M^{\th^n}_T \cY^n_T{\bf 1}_{\{\t^*_n=T\}}\Big]\\
&\les& \dbE\Big[M^{\th^n}_{\t^*_n} \cY^n_{\t^*_n}{\bf 1}_{\{\t^*_n<T\}} + M^{\th^n}_T \cY^n_T{\bf 1}_{\{\t^*_n=T\}}{\bf 1}_{\{\cY^n_T\ges 0\}}\Big]\\
&\les& C\Big[\sqrt{\dbP(\t^*_n<T)} + \sqrt{\e}\Big].
\eeaa
Then, for $\e>0$ small, we have
\bel{tau*n}
\dbP(\t^*_n<T) \ges {c^2\over C},\q\mbox{for all $n$ large enough}.
\ee
Moreover, by \rf{fgn} and  \rf{UUnest}, we have $\dbE[|\D_n|^2]\les \e^3$ for $n$ large enough, where
\bel{Deltan}
\D_n:=  \sup_{s\in [s_0, T], t\in [0, s]}\sup_{y,z} |f_n-f|(t,s, \h X^{s_0, \bx^0, s}, y, z) +|U-U_n|(t_0, \t^*_n,  \h X^{s_0, \bx^0, \t^*_n}),
\ee
which implies that
\beaa
\dbP(\D_n > \e)\les {1\over \e^2} \dbE[ |\D_n|^2]  \les \e.
\eeaa
Together with \rf{tau*n}, for $\e< {c^2\over C}$, we have $\dbP(\{\t^*_n < T\} \cap \{\D_n \les \e\}) >0$, for all $n$ large enough.
Therefore, there exists $\omega^*_n$ such that
\beaa
&\ds \t^*(\omega^*_n) < T, ~\D_n(\omega^*_n)\les \e,  ~\mbox{and}~ \psi_n(s_n^*, \bx^*_n)
=  V_n(s^*_n, \bx^*_n),\\
&\ds \mbox{where}\q s^*_n:=  \t^*_n(\omega^*_n),~ \bx^*_n:= \h X^{s_0, \bx^0, s^*_n}(\omega^*_n).
\eeaa
Now define
\beaa
\phi_n(s, \bx):= U_n(t_0, s, \bx) + {c\over 4(T-s_0)} (T-s) + \psi_n(s^*_n, \bx^*_n).
\eeaa
Similar to \rf{phi-A}--\rf{U1sub},  we have $\phi_n \in \underline \cA^{L_0}U_1(t_0, s_n^*, \bx_n^*)$ and then,
recalling that $T-s_0 \les \d = {1\over 8L}$,
\beaa
0&\les&  -2Lc+\Big[ \pa_s U_n(t_0,\cd) + {1\over 2} \langle \pa^2_{\bx\bx} U_n(t_0,\cd), (\si^{\cd}, \si^{\cd})\rangle
+ \langle \pa_\bx U_n(t_0,\cd), b^{\cd}\rangle \\
&&+ f\big(t_0, \cd, U_1(s^*_n,\cd), \langle \pa_\bx U_n(t_0,\cd), \si^{\cd}\rangle\big)\Big](s^*_n, \bx^*_n).
\eeaa
Since $U_n$ is a classical solution of the corresponding PPDE, this implies
\beaa
2Lc\les \Big[  f\big(t_0, \cd, U_1(s^*_n,\cd), \langle \pa_\bx U_n(t_0,\cd), \si^{\cd}\rangle\big)
- f_n\big(t_0, \cd, U_n(s^*_n,\cd), \langle \pa_\bx U_n(t_0,\cd), \si^{\cd}\rangle\big)\Big](s^*_n, \bx^*_n).
\eeaa
Then, by \rf{Deltan} and  recalling $\D_n(\omega^*_n)\les \e$, we have
\beaa
2Lc \les \Big[  f\big(t_0, \cd, U_1(s^*_n,\cd), \langle \pa_\bx U_n(t_0,\cd), \si^{\cd}\rangle\big)
- f\big(t_0, \cd, U(s^*_n,\cd), \langle \pa_\bx U_n(t_0,\cd), \si^{\cd}\rangle\big)\Big](s^*_n, \bx^*_n) +C\e.
\eeaa
Note further that \rf{c2} leads to $U_1(s^*_n, s^*_n, \bx^*_n) \les U(s^*_n, s^*_n, \bx^*_n)+c$,
and since $f$ is Lipschitz continuous and nondecreasing in $y$, we have $2Lc \les Lc +C\e$ and thus $Lc \les C\e$.
This is a desired contradiction since $\e$ can be arbitrarily small, thus $U_1(t,s,\bx) \les U(t,s,\bx)$ whenever $s\in [T-\d, T]$.

\ss

Now similar to the end of Theorem \ref{thm-partial}, we can show that  $U_1 \les U$ over the whole space.
\end{proof}

\begin{remark}\rm
In Theorem \ref{thm-comparison}, the assumptions imposed on $b,\si$ are somewhat strong.

(i) In \cite{EKTZ} and the subsequent works \cite{ETZ1,ETZ2},  general semi-martingale measures are used to define the corresponding set  of test functions $\underline \cA^L U$.  In this paper $X$ is not a semi-martingale and inside $U$ we need to use $\h X$, so in \rf{cA} we are using the exact process $\h X$. Consequently, we are not allowed to mollify $(b, \si)$, which will change the process $\h X$. Therefore, we assume $b, \si$ are smooth so that, together with mollified $(f_n, g_n)$, the corresponding PPDE has a classical solution $U_n$. It will be desirable to allow the $\h X$ in \rf{cA} to have more general distributions, in the spirit of \cite{ETZ1, ETZ2}.  Then it may become possible to mollify $(b, \si)$, and even to  allow $b, \si$ to depend on some controls. We shall leave this to future research.

(ii) If $X\equiv B^H$ is a fractional Brownian motion (with the Hurst parameter $H \neq {1\over 2}$),
namely $b\equiv 0$ and $\si(t,r,\bx)\equiv\si(t,r)$, following our arguments we may prove our results without Assumption \ref{assum-FSVIE} (iii) and
Assumption \ref{assum-classical} (iii).  Thus, in the setting that the randomness of $f,g$ comes from some fractional Brownian motions,
the  viscosity theory of the corresponding PPDEs still holds true.
\end{remark}

\section{Coupled Forward Backward SVIEs and Type-II BSVIEs}
\label{sect-extension}
\setcounter{equation}{0}
In this section, we investigate briefly two more general BSVIEs.

\subsection{Coupled FBSVIEs}

We now consider the following coupled FBSVIEs:
\begin{equation}\label{coupled-fbsvie}
\left\{\ba{lll}
\ds X_t=\bx_t+\int_0^t b(t,r,X_\cd,Y_r)dr+\int_0^t\si(t,r,X_\cd,Y_r)dW_r,\q t\in\dbT,\\
\ds Y_t=g(t,X_\cd) + \int_t^T f(t,r,X_\cd,Y_r,Z^t_r)dr-\int_t^T Z^t_rdW_r,\q t\in\dbT,
\ea\right.
\end{equation}
and the associated PPDE:
\bel{ppde-decouple}
\left\{\ba{lll}
\ns\ds \pa_s U(t, s, \bx) + {1\over 2} \langle \pa^2_{\bx\bx} U(t,s,\bx), (\h\si^{s,\bx}, \h\si^{s,\bx})\rangle + \langle \pa_\bx U(t,s,\bx), \h b^{s,\bx}\rangle \\
\ns\ds \q+ f\big(t, s, \bx, U(s,s, \bx), \langle \pa_\bx U(t,s,\bx), \h\si^{s,\bx}\rangle\big)=0,\q (t,s)\in\dbT_+^2,\,\bx\in\dbX,\\
\ns \ds U(t,T,\bx)=g(t,T,\bx),\q t\in\dbT,\,\bx\in\dbX,\ea\right.\ee
where, for $\f=b,\si$, $\h\f^{s,\bx}_r:=\f(r, s, \bx_\cd,U(s, s, \bx))$, $r\in[s,T]$.

When $T$ is small, Hamaguchi \cite{Hamaguchi-2020} proved the well-posedness of FBSVIE \rf{coupled-fbsvie} with the forward being a SDE. Following Ma--Protter--Yong \cite{Ma-Protter-Yong-1994}, in this subsection we prove the well-posedness of \rf{coupled-fbsvie} for arbitrary $T$, provided PPDE \rf{ppde-decouple} has a classical solution. The existence of such classical solution, as well as the well-posedness of \rf{coupled-fbsvie} in general, remains a challenging problem and we shall leave it for future research. For simplicity, in the following result we do not specify the precise technical conditions.

\begin{theorem}\label{thm-coupled-four} \sl
Assume $b,\si,f,g$ are sufficiently smooth with all the related derivatives being bounded. If PPDE \rf{ppde-decouple} has a classical solution $U$ with bounded $\pa_\bx U$, then the coupled FBSVIE \rf{coupled-fbsvie} admits a unique strong solution $(X, Y, Z)$ and the following representation holds:
\bel{YU-coulued}
\left.\ba{c}
\ds Y_t= U(t, t, \h X^t),\q Z^t_s=\lan\pa_\bx U(t, s, \h X^s),\,\h\si^{s,\h X^s}\ran,\q (t,s)\in\dbT_+^2,\\
\ns\ds \mbox{where}\q \h X^t := X\oplus_t \wt X_t,\q \wt X^s_t := \bx_s+\int_0^t b(s,r,X_\cd,Y_r)dr+\int_0^t\si(s,r,X_\cd,Y_r)dW_r.
\ea\right.\ee
\end{theorem}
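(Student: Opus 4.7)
The plan is to adapt the four-step scheme of Ma--Protter--Yong \cite{Ma-Protter-Yong-1994} to the Volterra setting, taking the classical solution $U$ as the decoupling field. First, I would substitute the conjectured representation $Y_r = U(r, r, \h X^r)$ into the forward equation to obtain the decoupled forward SVIE
$$X_t = \bx_t + \int_0^t b\big(t, r, X_\cd, U(r, r, \h X^r)\big)\, dr + \int_0^t \si\big(t, r, X_\cd, U(r, r, \h X^r)\big)\, dW_r,$$
where $\wt X$ is defined by the analogous Volterra formula with $U(r,r,\h X^r)$ in place of $Y_r$, and $\h X^r = X \oplus_r \wt X_r$. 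Since $\pa_\bx U$ is bounded, the map $\bx \mapsto U(r, r, \bx)$ is uniformly Lipschitz in the sup-norm on $\h\dbX$, so the composed coefficients inherit the Lipschitz continuity and $t$-regularity required by (an extension of) Proposition 2.1. Noting that $\h X^r$ is an $\cF_r$-measurable functional of the trajectory of $X$ on $[0, r]$ via the recursive definition of $\wt X$, a Picard iteration on the pair $(X, \wt X)$ contracts on the full interval $[0, T]$ and yields a unique strong solution.

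With this $X$ in hand, I define $(Y, Z)$ by the formulas \eqref{YU-coulued}, fix $t \in \dbT$, and apply the functional It\^o formula (Proposition 2.5) to the map $s \mapsto U(t, s, \h X^s)$ on $[t, T]$. Because $X$ satisfies the decoupled forward SVIE, the effective drift and diffusion of $\h X^s$ are precisely $\h b^{s, \h X^s}$ and $\h\si^{s, \h X^s}$, so It\^o's formula produces the drift $\cL U(t, s, \h X^s)$ computed with these coupled coefficients, which by the PPDE \eqref{ppde-decouple} equals
$$-f\big(t, s, \h X^s, U(s, s, \h X^s), \lan \pa_\bx U(t, s, \h X^s), \h\si^{s, \h X^s}\ran\big).$$
Adaptedness collapses the future part of $\h X^s$ inside $f$ to give $f(t, s, X_\cd, Y_s, Z^t_s)$, and integrating from $t$ to $T$ together with the terminal condition $U(t, T, \bx) = g(t, \bx)$ recovers the backward equation in \eqref{coupled-fbsvie}. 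For uniqueness, any adapted solution must, by the same functional It\^o argument combined with the uniqueness of standard BSDE solutions on $[t, T]$ (exactly as in Theorem 3.1), satisfy the representation \eqref{YU-coulued} in terms of its own forward process; hence that forward process solves the decoupled SVIE and coincides with $X$ by the first step, forcing $(Y, Z)$ to agree too.

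The main obstacle is the first step: the decoupled forward SVIE is not a standard path-dependent SVIE in the sense of Proposition 2.1, because the substituted $Y_r = U(r, r, \h X^r)$ depends not only on $X_{[0, r]}$ but also on the auxiliary process $\wt X_r$, which is itself defined recursively from $X$ through another Volterra integral. To fit into that framework one must either enlarge the unknown to the pair $(X, \wt X)$ and run a joint fixed-point argument, or derive a priori $\dbL^p$-bounds on $\wt X$ in terms of $X$ so the iteration closes on $X$ alone; in either case the global boundedness of $\pa_\bx U$ is what makes the contraction close on the full horizon $[0, T]$ rather than only on a small interval, and it is also what permits the $t$-regularity needed to invoke Proposition 2.1 in the presence of the extra functional dependence.
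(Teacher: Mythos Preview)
Your overall strategy---use $U$ as a decoupling field, solve the forward problem with $Y_r$ replaced by $U(r,r,\h X^r)$, then verify the backward equation via the functional It\^o formula---is exactly the paper's four-step scheme, and your verification of the backward equation for the \emph{constructed} solution is correct. However, there is a genuine gap in the uniqueness argument, and a smaller one in existence.

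\textbf{Existence.} The paper iterates on $\mathbf y\in\dbL^2_\dbF([0,\d];\dbR^m)$ (not on $(X,\wt X)$): given $\mathbf y$, solve the forward SVIE, compute $\h X^{\mathbf y,t}$, and set $\Phi(\mathbf y)_t:=U(t,t,\h X^{\mathbf y,t})$. The estimate one gets is $\dbE\int_0^\d|\Delta Y_t|^2dt\les C\d\,\dbE\int_0^\d|\Delta\mathbf y_t|^2dt$, which is a contraction only for $\d=1/(2C)$ small; one then patches successive intervals $[k\d,(k+1)\d]$ to reach $T$. Boundedness of $\pa_\bx U$ makes $C$ finite and \emph{independent of the horizon}, so the same $\d$ works at every step---but it does not make the map contract on $[0,T]$ in one shot as you claim. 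Your assertion can be repaired by this patching (or by a weighted-norm argument), but as written it overstates what bounded $\pa_\bx U$ buys.

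\textbf{Uniqueness.} This is where your sketch breaks down. If $(X,Y,Z)$ is an \emph{arbitrary} solution and you form $\h X^s$ from the actual $(X,Y)$, the functional It\^o formula applied to $s\mapsto U(t,s,\h X^s)$ produces drift and diffusion built from $b^{s,X,Y_s}$ and $\si^{s,X,Y_s}$---with the true $Y_s$ in the last slot---whereas the PPDE \rf{ppde-decouple} is written with $\h b^{s,\h X^s}$ and $\h\si^{s,\h X^s}$, which carry $U(s,s,\h X^s)$ there. Since you do not yet know $Y_s=U(s,s,\h X^s)$, the PPDE does \emph{not} cancel the drift, and the argument is not ``exactly as in Theorem~3.1'' (that theorem is for the decoupled case where $b,\si$ do not see $Y$). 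The paper closes this by setting $\Delta\cY_s:=U(s,s,\h X^s)-Y_s$, bounding the mismatch terms $\|\si^{s,X,Y_s}-\h\si^{s,\h X^s}\|+\|b^{s,X,Y_s}-\h b^{s,\h X^s}\|\les C|\Delta\cY_s|$ via Lipschitz continuity of $b,\si$, deriving a linear inequality $\dbE|\Delta\cY_t|^2\les C\int_t^T\dbE|\Delta\cY_r|^2dr$, and concluding $\Delta\cY\equiv0$ by Gr\"onwall. That Gr\"onwall step is the essential extra ingredient beyond the decoupled case and is missing from your proposal.
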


\begin{proof}  We proceed in two steps. Fix an arbitrary $T$.

\ss
{\it Step 1.}  We first show the existence.  Let $\d>0$ be a small number which will be specified later. Introduce a mapping $\Phi$ on $\dbL^2_\dbF([0, \d]; \dbR^m)$ by $\Phi(\by) := Y^\by$, where,  for any $\by\in \dbL^2_\dbF([0, \d]; \dbR^m)$,
\bel{Xy}
\left.\ba{c}
\ds X^\by_t=\bx_t+\int_0^t b(t,r,X^\by_\cd,\by_r)dr+\int_0^t\si(t,r,X^\by_\cd, \by_r)dW_r,\q t\in [0, \d];\\
\ns\ds \wt X^{\by, s}_t =  \bx_s+\int_0^t b(s,r,X^\by_\cd,\by_r)dr+\int_0^t\si(s,r,X^\by_\cd, \by_r)dW_r,\q s\in [t, T];\\
\ns\ds \h X^{\by,t} := X^\by \oplus_t \wt X^\by_t,\q Y^\by_t := U(t,t, \h X^{\by,t}),\q t\in [0, \d].
\ea\right.
\ee
We emphasize that here we do not need to assume $T\les \d$. We shall show that $\Phi$ is a contraction mapping when $\d$ is small enough.

Indeed, let $\by, \by' \in  \dbL^2_\dbF([0, \d]; \dbR^m)$. Denote $\D \by := \by- \by'$, $\D X := X^\by - X^{\by'}$, and similarly for the other notations. First applying Proposition \ref{prop-FSVIE2} one can easily have
\bel{DXest1}
\dbE\Big[\sup_{0\les t\les \d} |\D X_t|^2\Big] \les C \dbE\Big[\int_0^\d |\D \by_r|^2 dr\Big].
\ee
Then by standard SDE estimates we have
\beaa
\dbE\big[ |\D \wt X^s_t|^2\big] \les C \dbE\Big[\int_0^\d |\D \by_r|^2 dr\Big],\q t\in [0, \d], s\in [t, T].
\eeaa
Moreover, since $\pa_t b, \pa_t \si$ satisfy the desired regularity, following the arguments in \cite{Ruan-Zhang-2020} we have
\beaa
\dbE\Big[\sup_{s\in [t,T]} |\D \wt X^s_t|^2\Big] \les C \dbE\Big[\int_0^\d |\D \by_r|^2 dr\Big],\q t\in [0, \d].
\eeaa
This, together with \rf{DXest1}, implies that
\beaa
\dbE\Big[\sup_{s\in [0,T]} |\D \h X^t_s|^2\Big] \les C \dbE\Big[\int_0^\d |\D \by_r|^2 dr\Big],\q t\in [0, \d].
\eeaa
Therefore, since $\pa_\bx U$ is bounded,
\beaa
\dbE[|\D Y_t|^2] \les C\dbE[\|\D \h X^t\|^2] \les C \dbE\Big[\int_0^\d |\D \by_r|^2 dr\Big],\q t\in [0, \d],
\eeaa
and thus
\beaa
\dbE\Big[\int_0^\d |\D Y_t|^2dt\Big] \les C\d \dbE\Big[\int_0^\d |\D \by_t|^2 dt\Big].
\eeaa
Choose $\d := {1\over 2C}$, we see that $\Phi$ is a contraction mapping. Consequently, $\Phi$ has a unique fixed point $\by^*$. Denote $X^*_t := X^{\by^*}_t, \wt X^{*,s}_t := \wt X^{\by^*,s}_t$, $t\in [0, \d]$, $s\in [t, T]$.

Next, we introduce another mapping $\Phi_2$ on $\dbL^2_\dbF([\d, 2\d]; \dbR^m)$ by $\Phi_2(\by) := Y^\by$, where, by abusing the notations, for any $\by\in \dbL^2_\dbF([\d,2\d]; \dbR^m)$,
\bel{Xy2}
\left.\ba{c}
\ds X^\by_t=\wt X^{*,t}_\d+\int_\d^t b(t,r, X^*\oplus_\d X^\by_\cd,\by_r)dr+\int_\d^t\si(t,r, X^*\oplus_t X^\by_\cd, \by_r)dW_r,\q t\in [\d,2\d];\\
\ns\ds \wt X^{\by, s}_t =  \wt X^{*,s}_\d+\int_\d^t b(s,r,X^*\oplus_\d X^\by_\cd,\by_r)dr+\int_\d^t\si(s,r,X^*\oplus_\d X^\by_\cd, \by_r)dW_r,\q s\in [t, T];\\
\ns\ds \h X^{\by,t} := X^*\oplus_\d X^\by \oplus_t \wt X^\by_t,\q Y^\by_t := U(t,t, \h X^{\by,t}),\q t\in [\d, 2\d].
\ea\right.
\ee
Following the same arguments we can show that $\Phi_2$ is also a contraction mapping, and thus we may extend the unique fixed point $\by^*$ to $[0, 2\d]$. Repeat the arguments we will obtain a fixed point $\by^*\in \dbL^2_\dbF([0, T]; \dbR^m)$ such that
\bel{Xy*}
\left.\ba{c}
\ds X^*_t=\bx_t+\int_0^t b(t,r,X^*_\cd,\by^*_r)dr+\int_0^t\si(t,r,X^*_\cd, \by^*_r)dW_r,\q t\in [0, T];\\
\ns\ds \wt X^{*,s}_t =  \bx_s+\int_0^t b(s,r,X^*_\cd,\by^*_r)dr+\int_0^t\si(s,r,X^*_\cd, \by^*_r)dW_r,\q s\in [t, T];\\
\ns\ds \h X^{*,t} := X^* \oplus_t \wt X^*_t,\q \by^*_t := U(t,t, \h X^{*,t}),\q t\in [0, T].
\ea\right.
\ee

Now applying the functional It\^o formula \rf{Ito-formula} on $U(t,s, \h X^{*,t})$ and utilizing the PPDE \rf{ppde-decouple}, one can easily see  that  $(X^*, \by^*)$ satisfy FBSVIE \rf{coupled-fbsvie} and the representation \rf{YU-coulued} holds true.

\ss

{\it Step 2.} We next show the uniqueness. Let $(X, Y, Z)$ be an arbitrary solution, and $\wt X, \wt Y, \h X$ be defined in an obvious way: $\h X^t := X\oplus_t \wt X_t$, and
\beaa
\left.\ba{lll}
\ds \wt X^s_t=\bx_s+\int_0^t b(s,r,X_\cd,Y_r)dr+\int_0^t\si(s,r,X_\cd,Y_r)dW_r,\\
\ds \wt Y^t_s=g(t,X_\cd) + \int_s^T f(t,r,X_\cd,Y_r, Z^t_r)dr-\int_s^T Z^t_rdW_r,
\ea\right. \q (t,s) \in \dbT^2_+.
\eeaa
Now denote $\f^{s, (\bx, y)}_r:= \f(r, s, \bx, y)$, $r\in [s, T]$, for $\f=b,\si$, and
\beaa
&\cY_t := U(t,t, \h X^t),\q \wt \cY^t_s := U(t,s, \h X^s),\q \cZ^t_s := \lan \pa_\bx U(t,s, \h X^s),  \si^{s,  (X, Y_s)}\ran;\\
&\D \cY := \cY - Y,\q \D \wt \cY := \wt \cY - \wt Y,\q \D \cZ:= \cZ - Z.
\eeaa
 Applying the functional It\^o formula \rf{Ito-formula}  and then utilizing the PPDE \rf{ppde-decouple}, we have
 \beaa
 d \wt \cY^t_s &=&  \cZ^t_s dW_s + \Big[\pa_s U + {1\over 2} \big\lan \pa^2_{\bx\bx} U, (\si^{s, X, Y_s}, \si^{s, X, Y_s})\big\ran
 +\big \lan \pa_{\bx} U, b^{s, X, Y_s}\big\ran \Big](t,s, \h X^s) ds\\
 &=& \cZ^t_s dW_s + \Big[ {1\over 2} \big\lan \pa^2_{\bx\bx} U,\, (\si^{s, X, Y_s}, \si^{s, X, Y_s})\big\ran
 +\big \lan \pa_{\bx} U, b^{s, X, Y_s}\big\ran \Big](t,s, \h X^s) ds\\
 &&-\Big[ {1\over 2} \big\langle \pa^2_{\bx\bx} U, (\h\si^{s, \h X^s}, \h\si^{s, \h X^s})\big\rangle + \big\langle \pa_\bx U, \h b^{s, \h X^s}\big\rangle
+ f\big(\cd, \cY_s, \langle \pa_\bx U, \h\si^{s, \h X^s}\rangle\big)\Big](t,s, \h X^s)ds.
 \eeaa
 Then
 \beaa
&& d\D \wt \cY^t_s =  \D\cZ^t_s dW_s +  {1\over 2} \Big[\big\lan \pa^2_{\bx\bx} U, (\si^{s, X, Y_s}, \si^{s, X, Y_s})\big\ran
- \big \langle \pa^2_{\bx\bx} U, (\h\si^{s, \h X^s}, \h\si^{s, \h X^s})\big\rangle \Big](t,s, \h X^s)ds \\
 &&\q + \big\lan \pa_{\bx} U, b^{s, X, Y_s} - \h b^{s, \h X^s} \big\ran(t,s, \h X^s) ds+\Big[  f\big(\cd, Y_s, Z^t_s\big)- f\big(\cd, \cY_s, \langle \pa_\bx U, \h\si^{s, \h X^s}\rangle\big)\Big](t,s, \h X^s)ds.
 \eeaa
 Noting that $\D \wt \cY^t_T=0$, by standard BSDE arguments, this implies
 \beaa
 \dbE\Big[|\D \wt \cY^t_s|^2 + \int_s^T |\D\cZ^t_r|^2dr\Big] &\les& C\dbE\Big[\int_s^T \Big(|\D \wt \cY^t_r|^2 + \| \si^{r, X, Y_r} -\h\si^{r, \h X^r}\|^2 +  \| b^{r, X, Y_r} -\h b^{r, \h X^r}\|^2 \\
 &&+ |\D \cY_r|^2 + |\D \wt \cY^t_r| | Z^t_r - \langle \pa_\bx U(t,r, \h X^r), \h\si^{r, \h X^r}\rangle|\Big) dr\Big].
 \eeaa
 Note that, for $\f = b, \si$,
 \beaa
&& \| \f^{r, X, Y_r} -\h\f^{r, \h X^r}\| = \sup_{l\in [r, T]} |\f(l, r, X, Y_r) - \f(l,r, X, U(r,r, \h X^r))| \les  C|\D \cY_r|;\\
 && | Z^t_r - \langle \pa_\bx U(t,r, \h X^r), \h\si^{r, \h X^r}\rangle| \les |\D \cZ^t_r| + \big|\langle \pa_\bx U(t,r, \h X^r), \si^{r, X, Y_r}-\h\si^{r, \h X^r}\rangle\big|\\
 &&\qq \les |\D \cZ^t_r| +  C\|\si^{r, X, Y_r}-\h\si^{r, \h X^r}\| \les |\D \cZ^t_r| +  C|\D \cY_r|.
 \eeaa
 Then
  \beaa
 \dbE\Big[|\D \wt \cY^t_s|^2 + \int_s^T |\D\cZ^t_r|^2dr\Big] \les \dbE\Big[\int_s^T \big[ C|\D \wt \cY^t_r|^2 + C|\D \cY_r|^2 + C |\D \wt \cY^t_r| | \cZ^t_r| \big] dr\Big],
 \eeaa
 which implies:
  \bel{DYest1}
  \dbE\big[|\D \wt \cY^t_s|^2\big]\les \dbE\Big[|\D \wt \cY^t_s|^2 +{1\over 2} \int_s^T |\D\cZ^t_r|^2dr\Big] \les C\dbE\Big[\int_s^T \big[ |\D \wt \cY^t_r|^2 + |\D \cY_r|^2 \big] dr\Big].
 \ee
 Now applying the Gr\"{o}nwall inequality we obtain
 \bel{DYest2}
  \dbE\big[|\D \wt \cY^t_s|^2\big]\les  C\dbE\Big[\int_s^T  |\D \cY_r|^2  dr\Big],\q s\in [t, T].
 \ee
 Set $s=t$ at above, we have
  \beaa
   \dbE\big[|\D \cY_t|^2\big] = \dbE\big[|\D \wt \cY^t_t|^2\big]\les  C\dbE\Big[\int_t^T  |\D \cY_r|^2  dr\Big],\q t\in [0, T].
 \eeaa
 Apply the Gr\"{o}nwall inequality again, we have $\D \cY=0$. Plug this into \rf{DYest2} and then to \rf{DYest1}, we see that $\D \wt \cY=\D\cZ = 0$.
In particular, this implies that $\by := Y$ is a fixed point in Step 1. By the uniqueness of the fixed point, we see that $Y$ is unique, which implies immediately that $X$ and $Z$ are also unique.
\end{proof}

\subsection{Type-II BSVIEs}
Let $X$ be the solution to FSVIE \rf{FSVIE}. In this subsection we consider the following  type-II BSVIE:
\bel{BSVIE-II}
\left.\ba{c}
\ds Y_t=g(t,X_\cd)+\int_t^T f(t,r,X_\cd,Y_r,Z^t_r,Z^r_t)dr -\int_t^TZ^t_rdW_r,\\
\ds Y_t=\dbE[Y_t]+\int_0^tZ^t_rdW_r,
\ea\right.\q t\in\dbT.
\ee
We note that here $f$ depends on both $Z^t_r$ and $Z^r_t$, where $Z^r_t$ for $t\les r$ is determined by the martingale representation of $Y_r$, as in the second line of \rf{BSVIE-II}. The $\dbF$-adapted solution to \rf{BSVIE-II} is called an {\it M-solution}, with {\it M} referring to martingale. By Yong \cite{Yong-2008}, under suitable conditions BSVIE \rf{BSVIE-II} admits a unique $M$-solution. Inspired by Wang--Yong \cite{Wang-Yong-2019}, we introduce the following PPDE:
\bel{PPDE-II}\left\{\begin{aligned}
&\pa_s V(t,s,\bx)+{1\over2}\lan\pa^2_{\bx\bx} V(t,s,\bx),
(\si^{s,\bx},\si^{s,\bx})\ran+\lan\pa_\bx V(t,s,\bx),b^{s,\bx}\ran=0,\q (t,s)\in\dbT^2_{-},\\
&\pa_s U(t,s, \bx',\bx)+{1\over2}\lan \pa^2_{\bx\bx}U(t,s,\bx',\bx),(\si^{s,\bx},\si^{s,\bx})\ran+\lan\pa_\bx U(t,s,\bx',\bx),b^{s,\bx}\ran\\
&\q+\1n f\big(t,s,\bx,U(s,s,\bx,\bx),\lan\pa_\bx U(t,s,\bx',\bx),\si^{s,\bx}\ran,\lan\pa_\bx V(s,t,\bx'),\si^{t,\bx'}\ran\big)=0,\q (t,s)\in\dbT^2_{+},\\
& V(t,t,\bx)=U(t,t,\bx,\bx),\q U(t,T,\bx',\bx)=g(t,\bx_\cd),\q t\in\dbT,~ \bx', \bx\in \dbX.
\end{aligned}\right.\ee
Note that here $V: \dbT_-^2 \times \dbX \to \dbR^m$ and $U: \dbT_+^2 \times \dbX\times \dbX \to \dbR^m$.

\begin{theorem}\label{repre-type-II} \sl
Assume $b,\si,f,g$ are sufficiently smooth with all the related derivatives being bounded, and let $X, \tilde X, \h X$ be determined by FSVIE \rf{FSVIE} in the obvious sense. Assume  PPDE \rf{PPDE-II} has a classical solution $(V,U)$ with bounded derivatives. Then the unique $M$-solution of  BSVIE \rf{BSVIE-II} satisfies: for any  $0\les r\les t\les s\les T$,
\bel{repre-type-II-main}
Y_t=U(t,t,\h X^t,\h X^t),\q Z^t_s=\lan\pa_\bx U(t,s,\h X^t, \h X^s),\si^{s,X}\ran,\q Z^t_r=\lan\pa_\bx V(t,r,\h X^r),\si^{r,X}\ran.
\ee
\end{theorem}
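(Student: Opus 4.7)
The plan is to exhibit the right-hand sides of \rf{repre-type-II-main} as a candidate triple $(\bar Y,\bar Z)$ and verify that it is an $M$-solution of the type-II BSVIE \rf{BSVIE-II}; the identification $(Y,Z)=(\bar Y,\bar Z)$ will then follow from the uniqueness of $M$-solutions (Yong \cite{Yong-2008}). The two regions $\{s\ges t\}$ and $\{r\les t\}$ will be treated separately and glued via the PPDE boundary condition $V(t,t,\bx)=U(t,t,\bx,\bx)$, which produces the key identity $\bar Y_t=U(t,t,\h X^t,\h X^t)=V(t,t,\h X^t)$.

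First I would fix $t\in\dbT$, view $\h X^t$ as a frozen, $\cF_t$-measurable parameter, and apply the functional It\^{o} formula (Proposition \ref{Ito}) to $s\mapsto U(t,s,\h X^t,\h X^s)$ on $[t,T]$. Since $\pa_s U$, $\pa_\bx U$ and $\pa^2_{\bx\bx}U$ in \rf{PPDE-II} act only on the second $\bx$-slot while the first is frozen, the drift produced by the It\^{o} expansion is exactly the one cancelled by the second equation of \rf{PPDE-II}. Combining the adaptivity-based identification $\si^{s,\h X^s}=\si^{s,X}$ (noted explicitly after \rf{UPPDE}), the diagonal relation $U(s,s,\h X^s,\h X^s)=\bar Y_s$, and the matching $\lan\pa_\bx V(s,t,\h X^t),\si^{t,X}\ran=\bar Z^s_t$ filling the last slot of $f$ in \rf{PPDE-II}, one arrives at
\[
dU(t,s,\h X^t,\h X^s)=-f\big(t,s,X,\bar Y_s,\bar Z^t_s,\bar Z^s_t\big)\,ds+\bar Z^t_s\,dW_s.
\]
Integrating over $[t,T]$ and using $\h X^T=X$ together with $U(t,T,\bx',\bx)=g(t,\bx)$ recovers the first line of \rf{BSVIE-II} for $(\bar Y,\bar Z)$.

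Next, for the same fixed $t$, I would apply Proposition \ref{Ito} to $r\mapsto V(t,r,\h X^r)$ on $[0,t]$; the first equation of \rf{PPDE-II} asserts exactly that the drift in this expansion vanishes, so $dV(t,r,\h X^r)=\bar Z^t_r\,dW_r$. Integrating from $0$ to $t$ gives
\[
\bar Y_t=V(t,t,\h X^t)=V(t,0,\bx)+\int_0^t\bar Z^t_r\,dW_r,
\]
and taking expectations identifies $V(t,0,\bx)=\dbE[\bar Y_t]$, so the martingale line of \rf{BSVIE-II} is also satisfied. Adaptedness and square-integrability of $\bar Z$ are inherited from the bounded-derivative hypothesis on $(V,U)$ and the moment bounds of Proposition \ref{prop-fSVIE}, so $(\bar Y,\bar Z)$ is a genuine $M$-solution; uniqueness then yields $(Y,Z)=(\bar Y,\bar Z)$, which is exactly \rf{repre-type-II-main}.

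The main obstacle, beyond the careful bookkeeping of which path slot is frozen versus flowing in the expansion of $U(t,\cd,\h X^t,\h X^\cd)$, will be to justify Proposition \ref{Ito} with the random, $\cF_t$-measurable parameter $\h X^t$ occupying the first path slot of $U$. This should be resolved either by conditioning on $\cF_t$ and applying Proposition \ref{Ito} pathwise in $\omega$, or by revisiting the time-discretization argument of \cite{Viens-Zhang-2019}, which carries through verbatim for functions with an auxiliary $\cF_t$-measurable parameter that does not interact with the It\^{o} integration variable on $[t,T]$. A second subtle point, already automatic from the construction, is that $\bar Z^r_t=\lan\pa_\bx V(r,t,\h X^t),\si^{t,X}\ran$ for $r\ges t$ is $\cF_t$-measurable as required by the $M$-solution framework, since $\h X^t$ is $\cF_t$-measurable and $\si^{t,X}$ depends on $X$ only up to time $t$ by progressive measurability.
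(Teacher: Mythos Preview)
Your proposal is correct and follows essentially the same approach as the paper: define the candidate triple via the right-hand sides of \rf{repre-type-II-main}, apply the functional It\^{o} formula to $r\mapsto V(t,r,\h X^r)$ on $[0,t]$ and to $s\mapsto U(t,s,\h X^t,\h X^s)$ on $[t,T]$, then use the two equations of \rf{PPDE-II} to verify the two lines of \rf{BSVIE-II} and conclude by uniqueness of $M$-solutions. The paper's proof is slightly terser (it treats the $V$-part first and simply ``fixes $(t,\h X^t)$'' without dwelling on the random-parameter issue you raise), but the substance is identical.
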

\begin{proof} Define $Y, Z$ as in \rf{repre-type-II-main} and $\wt Y^t_s := U(t,s,\h X^t, \h X^s)$, $\wt Y^t_r = V(t,r,\h X^r)$ for $0\les r\les t\les s\les T$.  We shall verify that they satisfy BSVIE \rf{BSVIE-II}.

First, fix $t$, and apply functional It\^o formula \rf{Ito-formula} on $V(t, r, \h X^r); r\in [0, t]$, we have
\beaa
d \wt Y^t_r =\Big[\pa_r V + {1\over2}\lan\pa^2_{\bx\bx} V, (\si^{r,X},\si^{r,X})\ran+\lan\pa_\bx V,b^{r,X}\ran\Big](t, r, \h X^r)ds  +  Z^t_r dW_r= Z^t_r dW_r,
\eeaa
where the second equality is due to  \rf{PPDE-II}. Since $\wt Y^t_t = Y_t$, this verifies the second line of \rf{BSVIE-II}.

Next, fix $(t, \h X^t)$, and apply functional It\^o formula \rf{Ito-formula} on $U(t, s, \h X^t, \h X^s);s\in [t, T]$, we have
\beaa
d \wt Y^t_s &=& \Big[\pa_s U+ {1\over2}\lan\pa^2_{\bx\bx} U, (\si^{s,X},\si^{s,X})\ran+\lan\pa_\bx U,b^{s,X}\ran\Big](t, s, \h X^t, \h X^s)ds  +  Z^t_s dW_s\\
&=& - f(t,s, Y_s, Z^t_s, Z^s_t) ds  +  Z^t_s dW_s,
\eeaa
where the second equality is also due to  \rf{PPDE-II}.  This verifies the first line of \rf{BSVIE-II} immediately.
\end{proof}

\section{Probabilistic representation of $\pa_\bx U(t, s,\bx)$}\label{sect-repre}
In this section, we shall investigate linear FBSVIEs more closely and then use it to obtain an explicit representation formula for $\pa_\bx U(t, s,\bx)$.

\subsection{A duality result for linear FSVIE}
For the ease of presentation, in the rest of the paper we restrict to one dimensional processes only.
However, all our results hold true in the multiple dimensional situation, and we provide a multiple dimensional setting in Remark \ref{rem-multiple} below.

Note that the dual space of $C([0, T])$ consists of signed measures on $[0, T]$ (see \cite{Yosida-1980}, p.119, for example).
That is, for a continuous linear mapping $\F: C([0, T]) \to \dbR$,
there exists a unique function $\check F$ on $[0, T]$ with finite variation such that $\lan \F, \eta \ran = \int_0^T \eta_t \check F(dt)$.
Then we may view the $D b, D\si$ in \rf{nablaX} as signed measures.
For this purpose, in this subsection we consider the following FSVIE:
\bel{measure-FSVIE}
\cX_t = \eta_t + \int_0^t  \int_0^s \cX_r~ \check b(t, s, dr) ds +  \int_0^t  \int_0^s \cX_r ~\check \si(t, s, dr) dW_s.
\ee
Here $\check b, \check \si: (t,s,r,\omega)\in \dbT_-^3 \times \Omega \to \dbR$  are progressively measurable,
the adaptedness of $\omega$ is with respect to the second time variable $s$,
and the dependence on the third time variable $r$ is right continuous with finite variation. We are interested in the term
\bel{measure-Y0}
 \dbE\Big[ \int_0^T \cX_r ~\check g(dr)\Big],
\ee
where $\check g: \dbT\times \Omega\to \dbR$ is $\cF_T$ measurable in $\omega$ and right continuous and finite variated in $r$.
Our goal is to find a finite variated function $r\in \dbT \mapsto \wt \cY(r, 0)$  such that the following duality principle holds:
\bel{duality}
 \dbE\Big[ \int_0^T \cX_r \check g(dr) \Big] = \int_0^T \eta_r \wt \cY(dr, 0).
 \ee
 This will give us an explicit representation for the linear mapping $\check F$:
 \bel{rep}
 \Phi =\check F(\cd)= \wt\cY(\cd, 0).
 \ee

We shall approach the problem dynamically. Define, for $0\les t\les r \les T$,
\bea
\label{wtcX}
\wt \cX^r_t := \eta_r +  \int_0^r  \int_0^{s\wedge t} \cX_l~ \check b(r, s, dl) ds +  \int_0^r  \int_0^{s\wedge t} \cX_l ~\check \si(r, s, dl) dW_s.
\eea
 Note that $\eta_r = \tilde X^r_0$. In light of \rf{duality}, we want to find $\wt\cY$ such that
 \bel{duality2}
 \dbE_t\Big[ \int_t^T \cX_r \check g(dr) \Big] =  \int_t^T \wt \cX^r_t \wt \cY(dr, t), \q t\in [0, T].
 \ee
 For this purpose, we introduce the following type-II BSVIE:
 \bel{II-BSVIE-dual}
 \left.\ba{c}
 \ds \cY_t = \check g(t)  - \int_{t\les l\les s\les r\les T}  \Big[ \check b(r,s, dl)  \wt \cY(dr, s) + \check \si(r,s, dl) \cZ(dr, s)\Big]ds - \int_t^T \cZ(t, s) dW_s;\\
 \ds \wt \cY(t, s) = \cY_t - \int_s^t \cZ(t, r) dW_r,\q 0\les s\les t.
 \ea\right.
 \ee
 We emphasize that, for fixed $t$,
\ss

 $\bullet$ The mapping $s\in [0, t] \to \wt \cY(t,s)$ is an $\dbF$-martingale;

\ss
 $\bullet$ The mappings $s\in [t, T] \to \big(\wt \cY(s,t), \cZ(s, t)\big)$ are $\cF_t$-measurable and  finite variated.

\ss
 \no The second requirement above, of course, will add difficulty for the existence of solutions, which we shall leave for future research.

 \begin{theorem}
 \label{thm-Fduality}
 Let $\cX$ and $\cY, \wt \cY, \cZ$ be the solution to \rf{measure-FSVIE} and \rf{II-BSVIE-dual}, respectively.
 Then \rf{duality2} holds, and in particular \rf{duality} holds.
 \end{theorem}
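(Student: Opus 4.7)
The plan is to show that the process
\[ N_t := \int_0^t \cX_r\,\check g(dr) + \int_t^T \wt\cX^r_t\,\wt\cY(dr,t),\qquad t\in[0,T], \]
is an $\dbF$-martingale. Once this is established, the terminal identity $N_T = \int_0^T \cX_r\,\check g(dr)$ (the second summand collapses at $t=T$) together with the martingale property $N_t = \dbE_t[N_T]$ will immediately yield the conditional duality \rf{duality2}; specializing to $t=0$ and using $\wt\cX^r_0 = \eta_r$ from \rf{wtcX} then produces \rf{duality}.

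To verify the martingale property, I would compute the semimartingale decomposition of $N$ and show its drift vanishes. For each fixed $r\in[t,T]$, the second equation of \rf{II-BSVIE-dual} says that $t\mapsto \wt\cY(r,t)$ is an $\dbF$-martingale on $[0,r]$ with $d_t\wt\cY(r,t)=\cZ(r,t)\,dW_t$, while from \rf{wtcX} the map $t\mapsto \wt\cX^r_t$ is an $\dbF$-semimartingale whose $t$-differential is obtained by moving the upper bound $s\wedge t$ inside the integrals and invoking a stochastic Fubini argument. Applying the It\^o product rule to $\wt\cX^r_t\,\wt\cY(r,t)$ for each $r$, integrating against the measure $\wt\cY(dr,t)$ in $r$, and using a Leibniz-type rule to handle the moving lower limit $t$ of the outer $\int_t^T$, one obtains the decomposition of $J_t:=\int_t^T\wt\cX^r_t\,\wt\cY(dr,t)$. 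The key observation is that the triple integral
\[ \int_{t\le l\le s\le r\le T}\big[\check b(r,s,dl)\wt\cY(dr,s) + \check\si(r,s,dl)\cZ(dr,s)\big]\,ds \]
appearing in the BSVIE \rf{II-BSVIE-dual} is by design the formal adjoint of the linear Volterra operators in \rf{measure-FSVIE}. Consequently, the drift terms produced by $d_t\wt\cX^r_t$, once paired against $\wt\cY(dr,t)$ in $r$ and combined with the boundary contribution at $r=t$ (which yields $\wt\cX^t_t\,\cY_t=\cX_t\cY_t$), should cancel exactly against $\cX_t\,\check g(dt)$ coming from the first summand of $N_t$, leaving only Brownian terms. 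Standard integrability -- inherited from Proposition \ref{prop-FSVIE2} for $\cX$ and from the assumed well-posedness of \rf{II-BSVIE-dual} -- then upgrades the resulting local martingale to a true martingale.

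The main obstacle lies not in the strategy but in rigorously handling the It\^o calculus for the two-parameter measure-valued processes. Three issues in particular need attention: (i) a stochastic Fubini interchange to make sense of $d_t\wt\cX^r_t$, given that the $dW_s$ integration in \rf{wtcX} runs over $s\in[0,r]$ with the $t$-dependence hidden inside the inner $dl$-integration via $s\wedge t$; (ii) a disintegration or mollification of the random measures $\wt\cY(\cdot,t)$ and $\cZ(\cdot,t)$ so that the product-and-integrate computation becomes legitimate; and (iii) careful accounting at the $r=t$ boundary, using $\wt\cX^t_t=\cX_t$ and $\wt\cY(t,t)=\cY_t$, to verify the telescoping cancellation. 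A natural way to finesse these technicalities is to first prove the duality under strong regularity assumptions on $\check b,\check \si,\check g$ (e.g., absolute continuity with bounded densities), where all integrals are classical and Fubini is trivial, and then pass to the general measure-valued case by approximation in total variation.
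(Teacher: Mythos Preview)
Your strategy is appealing, but the process $N_t$ you define is not $\dbF$-adapted, so it cannot be an $\dbF$-martingale. The terminal data $\check g:\dbT\times\Omega\to\dbR$ is only assumed to be $\cF_T$-measurable in $\omega$ (see the paragraph around \rf{measure-Y0}), hence the first summand $\int_0^t \cX_r\,\check g(dr)$ is $\cF_T$-measurable but in general not $\cF_t$-measurable. Similarly, inspection of \rf{wtcX} shows the stochastic integral there runs $dW_s$ over $s\in[0,r]$, so $\wt\cX^r_t$ is $\cF_r$-measurable rather than $\cF_t$-measurable; the second summand $\int_t^T\wt\cX^r_t\,\wt\cY(dr,t)$ is therefore also not a priori $\cF_t$-measurable. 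Consequently the It\^o product-rule computation you sketch is ill-posed: you cannot speak of the ``drift'' of $N$, and the boundary cancellation you anticipate (matching $\cX_t\cY_t$ against $\cX_t\,\check g(dt)$) cannot occur pathwise, since $\cY_t$ is $\cF_t$-measurable while $\check g(dt)$ is not.

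The paper bypasses this adaptedness obstruction by never writing a pathwise martingale. It proves \rf{duality} (the $t=0$ case) directly via a discrete approximation: partition $[0,T]$, write $\int_0^T\eta_t\,\wt\cY(dt,0)\approx\sum_i[\wt\cY(t_{i+1},0)-\wt\cY(t_i,0)]\eta_{t_i}$, substitute for $\eta_{t_i}$ from \rf{measure-FSVIE}, and then take unconditional expectations. The expectation absorbs the $\cF_T$-measurability of $\check g$, and the martingale representation in the second line of \rf{II-BSVIE-dual} converts the increments of $\wt\cY$ and $\cZ$ into tractable sums. After invoking the first line of \rf{II-BSVIE-dual} and rearranging, the remainder is shown to vanish as the mesh tends to zero, using right-continuity of the finite-variation kernels $\check b,\check\si$ in their third argument. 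The conditional version \rf{duality2} is then claimed by the same argument with $\dbE$ replaced by $\dbE_t$. If you wish to salvage a continuous-time version of your idea, the natural object is the genuine martingale $t\mapsto\dbE_t\big[\int_0^T\cX_r\,\check g(dr)\big]$, but identifying its conditional expectation with the right-hand side of \rf{duality2} is precisely the content of the theorem and does not shortcut the computation.
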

 \begin{proof}
 We shall only prove \rf{duality}, the arguments for \rf{duality2} are similar.

Since $\eta$ is continuous, by taking time partitions we have
  \beaa
   \int_0^T \eta_t \wt \cY(dt,0) =\lim_{N\to\infty} \sum_{i=0}^{N-1}  \big[\wt \cY(t_{i+1},0) -  \wt \cY(t_i,0)\big]\eta_{t_i}.
   \eeaa
Now fix a time partition with large $N$,  by the second line of \rf{II-BSVIE-dual}, we see that
 \beaa
&& \int_0^T \eta_t \wt \cY(dt,0) \approx \sum_{i=0}^{N-1} \big[\wt \cY(t_{i+1},0) -  \wt \cY(t_i,0)\big] \eta_{t_i}
= \sum_{i=0}^{N-1}\dbE\Big[  \big[\cY_{ t_{i+1}} -   \cY_{t_i}\big] \eta_{t_i}\Big]\\
&&\q= \sum_{i=0}^{N-1} \dbE\Big[\big[\cY_{ t_{i+1}} -   \cY_{t_i}\big]  \big[\cX_{t_i} - \int_0^{t_i}  \int_0^s \cX_r~ \check b(t_i, s, dr) ds -  \int_0^{t_i}  \int_0^s \cX_r ~\check \si(t_i, s, dr) dW_s\big] \Big]\\
&&\q= \sum_{i=0}^{N-1} \dbE\Big[ \big[\cY_{ t_{i+1}} -   \cY_{t_i}\big]\cX_{t_i} -\big[\cY( t_{i+1},t_i) -   \cY(t_i,t_i)\big]\\
&&\qq\qq\q\times \big[ \int_0^{t_i}  \int_0^s \cX_r~ \check b(t_i, s, dr) ds + \int_0^{t_i}  \int_0^s \cX_r ~\check \si(t_i, s, dr) dW_s\big]\Big]\\
&&\q
= \sum_{i=0}^{N-1} \dbE\Big[ \big[\cY_{ t_{i+1}} -   \cY_{t_i}\big]\cX_{t_i} -  \int_0^{t_i}\big[\wt \cY( t_{i+1}, s) -  \wt \cY( t_i, s)\big] \big[ \int_0^s \cX_r~ \check b(t_i, s, dr)\big]  ds\\
&&\q\qq\qq - \int_0^{t_i} \big[\cZ(t_{i+1}, s) - \cZ(t_i,s)\big]\big[ \int_0^s \cX_r~ \check \si(t_i, s, dr)\big]  ds \Big]\\
&&\q\approx  \sum_{k=0}^{N-1} \dbE\Big[ \big[\cY_{ t_{k+1}} -   \cY_{t_k}\big]\cX_{t_k}\Big] - {T\over n}\sum_{0
\les k<j<i\les N-1} \Big[ \big[\wt \cY( t_{i+1}, t_j) -  \wt \cY( t_i, t_j)\big] \\
&&\q\q\times\big[\check b(t_i, t_j, t_{k+1}) - \check b(t_i, t_j, t_k)\big]+ \big[\cZ( t_{i+1}, t_j) -  \cZ( t_i, t_j)\big] \big[\check \si(t_i, t_j, t_{k+1}) - \check \si(t_i, t_j, t_k)\big] \Big]\cX_{t_k}.
\eeaa
Here and in the sequel, we are using $\approx$ to denote a difference of $o(1)$ term when $N\to\infty$. Then
\bel{app1}
\left.\ba{lll}
\ds\int_0^T \eta_t \wt \cY(dt,0) - \dbE\Big[\int_0^T \cX_t \check g(dt)\Big]\\
\ds \q\approx  \int_0^T \eta_t \wt \cY(dt,0) - \sum_{k=0}^{N-1} \dbE\Big[ \big[\check g(t_{k+1}) - \check g(t_k)\big]\cX_{t_k} \Big]
\approx   \sum_{k=0}^{N-1} \dbE\big[  \cX_{t_k}I_k \big],
\ea\right.
\ee
where, for each $k$,
\beaa
 I_k &:=&  \int_{t_k\les l\les s\les r\les T}  \Big[ \check b(r,s, dl)  \wt \cY(dr, s) + \check \si(r,s, dl) \cZ(dr, s)\Big]ds \\
&&-  \int_{t_{k+1}\les l\les s\les r\les T}  \Big[ \check b(r,s, dl)  \wt \cY(dr, s) + \check \si(r,s, dl) \cZ(dr, s)\Big]ds\\
&&- {T\over n}\sum_{k<j<i\les N-1} \Big[ \big[\check b(t_i, t_j, t_{k+1}) - \check b(t_i, t_j, t_k)\big] \big[\wt \cY( t_{i+1}, t_j) -  \wt \cY( t_i, t_j)\big] \\
&&\qq + \big[\check \si(t_i, t_j, t_{k+1}) - \check \si(t_i, t_j, t_k)\big] \big[\cZ( t_{i+1}, t_j) -  \cZ( t_i, t_j)\big] \Big].
\eeaa
One may easily check that
\beaa
I_k
&\approx&  \int_{t_k\les s\les r\les T}  \Big[ \big[\check b(r,s, s) - \check b(r,s, t_k)\big] \wt \cY(dr, s) + \big[\check \si(r,s, s) - \check \si(r,s, t_k)\big] \cZ(dr, s)\Big]ds \\
&&-  \int_{t_{k+1}\les s\les r\les T}  \Big[ \big[\check b(r,s, s) - \check b(r,s, t_{k+1})\big]  \wt \cY(dr, s)
+ \big[\check \si(r,s, s) - \check \si(r,s, t_{k+1})\big] \cZ(dr, s)\Big]ds\\
&&-\int_{t_{k+1}\les s\les r\les T} \Big[ \big[\check b(r, s, t_{k+1}) - \check b(r, s, t_k)\big] \wt \cY( dr, s)
+ \big[\check \si(r, s, t_{k+1}) - \check \si(r, s, t_k)\big] \cZ( dr, s) \Big] ds \\
&=& \int_{t_k\les s\les r\les T}  \Big[ \big[\check b(r,s, s) - \check b(r,s, t_k)\big] \wt \cY(dr, s)
+ \big[\check \si(r,s, s) - \check \si(r,s, t_k)\big] \cZ(dr, s)\Big]ds \\
&&-  \int_{t_{k+1}\les s\les r\les T}  \Big[ \big[\check b(r,s, s) - \check b(r,s, t_{k})\big]  \wt \cY(dr, s)
 + \big[\check \si(r,s, s) - \check \si(r,s, t_{k})\big] \cZ(dr, s)\Big]ds\\
&=& \int_{t_k}^{t_{k+1}} \int_s^T  \Big[ \big[\check b(r,s, s) - \check b(r,s, t_k)\big] \wt \cY(dr, s)
+\big [\check \si(r,s, s) - \check \si(r,s, t_k)\big] \cZ(dr, s)\Big]ds.
\eeaa
Substituting the above into \rf{app1} implies that
\bel{appro1}
\begin{aligned}
& \int_0^T \eta_t \wt \cY(dt,0) - \dbE\Big[\int_0^T \cX_t \check g(dt)\Big] \approx\sum_{k=0}^{N-1} \dbE\Big[\int_{t_k}^{t_{k+1}} \int_s^T  \Big[ \big[\check b(r,s, s) - \check b(r,s, t_k) \big]\wt \cY(dr, s)\\
&\q \qq +\big [\check \si(r,s, s) - \check \si(r,s, t_k) \big]\cZ(dr, s)\Big]ds\cX_{t_k}\Big].
\end{aligned}\ee
Using the fact that the finite variated function is a.e continuous, we get
  \beaa
 \lim_{t_k\uparrow s}\big [\check b(r,s, s) - \check b(r,s, t_k)\big]
  =  \lim_{t_k\uparrow s} \big[ \check \si(r,s, s) - \check \si(r,s, t_k)\big] = 0, \q\mbox{for a.e. $s$}.
  \eeaa
Then from \rf{appro1}  we see that  \rf{duality} holds true by letting $N\to\i$.
 \end{proof}

\begin{remark}
\label{rem-measure}
\rm  In the state dependent case, the measures are degenerate:
\beaa
\check b(t,s, dr) = b(t,s) \d_s(r), \q \check \si(t,s,dr) = \si(t,s) \d_{s}(r),\q \check g(dr) =  g(r) dr,
\eeaa
with the Dirac measure $\d$ being defined by
$$
\d_s\bx=\bx_s,\q s\in[0,T].~\bx\in C([0,T]).
$$
Then \rf{measure-FSVIE} and  \rf{II-BSVIE-dual} become
\bel{II-state}
\left.\ba{l}
\ds \cX_t = \eta_t + \int_0^t  b(t,s) \cX_s ds +  \int_0^t \si(t, s) \cX_s dW_s;\\
 \ds \cY_t = \check g(t)  - \int_t^T \int_s^T  \Big[ b(r,s)  \wt \cY(dr, s) +\si(r,s) \cZ(dr, s)\Big]ds - \int_t^T \cZ(t, s) dW_s;\\
 \ds  \wt \cY(t, s) = \cY_t - \int_s^t \cZ(t, r) dW_r,\q 0\les s\les t.
\ea\right.
\ee
Let $(Y, Z)$ denote the solution to the following type-II BSVIE:
\bel{II-Yong}
\left.\ba{l}
 \ds Y_t = g(t)  + \int_t^T  [b(s,t)  Y_s +\si(s,t) Z(s, t)]ds - \int_t^T Z(t, s) dW_s;\\
 \ds Y_t = \dbE[Y_t] +\int_0^t Z(t,s) dW_s.
\ea\right.
\ee
We can easily check that
\beaa
\cY_t = \cY_0 + \int_0^t Y_r dr, \q \wt \cY(t,s) = \cY_s + \int_s^t \dbE_s[Y_r] dr,\q \cZ(t,s) = \int_s^t Z(r,s) dr,\q 0\les s\les t\les T
\eeaa
satisfy the BSVIE  in \rf{II-state}. Then \rf{duality} becomes
\beaa
\dbE\Big[ \int_0^Tg(t) \cX_t dt\Big] =  \dbE\Big[ \int_0^T \cX_t\check g(dt)\Big]= \int_0^T \eta_t \wt \cY(dt, 0)
 =  \int_0^T \eta_t   \dbE[Y_t] dt = \dbE\Big[\int_0^T \eta_t Y_tdt\Big].
\eeaa
This is exactly the duality in Yong \cite{Yong-2006, Yong-2008}.
So our result here is a generalization of these works.
\end{remark}
\begin{remark}
\label{rem-measure2}
\rm  Our result also generalizes the duality between delayed SDEs and anticipated BSDEs in Peng--Yang \cite{Peng-Yang}. Let $(X,Y,Z)$ denote the solution to the following equations:
\bel{SDDE}\left\{\ba{ll}
\ds dX^\xi_s=(\mu_sX^\xi_s+\bar\mu_{s-\th}X^\xi_{s-\th})ds+(\si_s X^\xi_s+\si_{s-\th} X^\xi_{s-\th}) dW_s,\q s\in[t,T+\th],\\
\ns\ds X_t=\xi,\q X_s=0,\q s\in[t-\th,t];\\
\ns\ds-dY_s=\big(\mu_s Y_s+\bar\mu_{s}\dbE_s[Y_{s+\th}]+\si_s Z_s+\si_{s}\dbE_s[Z_{s+\th}]+l_s\big)ds-Z_s dW_s,\q s\in[t,T],\\
\ns\ds Y_s=Q_s,~~ Z_s=P_s,\q s\in[T,T+\th],
\ea\right.\ee
where $\th>0$ is a fixed delay time.
From \cite[Theorem 2.1]{Peng-Yang} we get the duality
\bel{ab-duality}
\lan Y_{t},\xi\ran=\dbE_t\Big[X^\xi_T Q_T+\int_t^T X^\xi_s l_s ds+\int_T^{T+\th}(Q_s \bar \mu_{s-\th}+P_s\bar\sigma_{s-\th})X^\xi_{s-\th}ds\Big]:= \dbE_t\[\int_t^T X^\xi_s\check g(ds)\],
\ee
which shows that $Y_t$ is an explicit representation of the linear functional $\xi\mapsto\dbE_t\big[\int_t^T X^\xi_s\check g(ds)\big]$.
Since FSVIE  \rf{measure-FSVIE} is more general than the delayed SDE in \rf{SDDE},
we can also use Theorem \ref{thm-Fduality} to give such an  explicit representation for $\xi\mapsto\dbE_t\big[\int_t^{T}X^\xi_s\check g(ds)\big]$.
Indeed,  take
$$
\begin{aligned}
&\check b(s, dr) = \mu_s \d_s(r)+\bar\mu_{s-\th}\d_{s-\th}(r), \q \check \si(s,dr) = \si(s) \d_{s}(r)+\bar\si_{s-\th}\d_{s-\th}(r),\q s\in[t+\th,T],\\
&\check b(s, dr) = \mu_s \d_s(r), \q \check \si(s,dr) = \si(s) \d_{s}(r),\q s\in[t,t+\th],\\
& \check g(dr) =  l(r)dr+\dbE_r[Q_{r+\th} \bar \mu_{r}+P_{r+\th}\bar\sigma_{r}]1_{[T-\th,T]}(r)dr+Q(T)\d_T(r).
\end{aligned}
$$
Note that $\check b(\t,s, dr), \check\si(\t,s, dr)$ are independent of $\t$, the corresponding BSVIE \rf{II-BSVIE-dual} reads
$$
 \left.\ba{c}
 \ds \cY_\t = \check g(\t)  - \int_{\t}^T \mu_s[\wt\cY(T,s)-\wt\cY(s,s) ]ds - \int_{\t+\th}^T \bar\mu_{s-\th}[\wt\cY(T,s)-\wt\cY(s,s) ]ds\\
 \ds \qq\qq- \int_{\t}^T \si_s[\cZ(T,s)-\cZ_s ]ds - \int_{\t+\th}^T \bar\si_{s-\th}[\cZ(T,s)-\cZ_s ]ds-\int_\t^T \cZ_sdW_s;\\
 \ds \wt \cY(\t, s) = \cY_\t - \int_s^\t \cZ(\t,r) dW_r,\q 0\les s\les \t.
 \ea\right.
$$
Then it is easy to check
\begin{align*}
&Y_t=\cY(T,t)-\cY_t,\q Z_t=\cZ(T,t)-\cZ_t, \\
&\lan \cY(T,t)-\cY_t,\,\xi\ran=\lan Y_{t},\,\xi\ran= \dbE_t\[\int_t^T X^\xi_s\check g(ds)\].
\end{align*}
Thus Theorem \ref{thm-Fduality}  covers the duality in \cite{Peng-Yang}.
\end{remark}

\begin{remark}
\label{rem-multiple}\rm
The duality \rf{duality2} still holds true in the multidimensional case, where the FSVIE \rf{measure-FSVIE} and type-II BSVIE \rf{II-BSVIE-dual} become
$$
 \left.\ba{l}
 \ds\cX_t = \eta_t + \int_0^t  \int_0^s \cX_r~ \check b(t, s, dr) ds +  \sum_{ j=1}^d\int_0^t  \int_0^s \cX_r ~\check \si^{ j}(t, s, dr) dW^{ j}_s;\\
  \ds\cY_t = \check g(t)  - \int_{t\les l\les s\les r\les T}  \Big[ \check b(r,s, dl)  \wt \cY(dr, s) +  \sum_{ j=1}^d\check \si^{ j}(r,s, dl) \cZ^{ j}(dr, s)\Big]ds
  -\sum_{ j=1}^d\int_t^T \cZ^{ j}(t, s) dW_s^{ j};\\
  \ds\wt \cY(t, s) = \cY_t -  \sum_{ j=1}^d\int_s^t \cZ^{ j}(t, r) dW^{ j}_r,\q 0\les s\les t,
 \ea\right.
$$
with $\check b, \check \si^{ j}: (t,s,r,\omega)\in \dbT_-^3 \times \Omega \to \dbR^{n\times n}$ and $\check g: \dbT\times \Omega\to \dbR^{m\times n}$ being proper maps.
\end{remark}

\subsection{An explicit solution for linear BSVIEs}
In this subsection we investigate the following linear BSVIE:
\bel{linearBSVIE}
 \cY_t = \xi_t + \int_t^T \big[\a(t, r) \cY_r + \beta(t, r) \cZ^t_r \big] dr - \int_t^T \cZ^t_r dW_r,
\ee
where $\xi: \dbT \times \Omega \to \dbR$,  $\a, \beta: \dbT_+^2 \times \Omega \to \dbR$ are progressively measurable (omitting the variable $\omega$).

\begin{proposition}\label{variation-formula} \sl
Assume $\a, \beta$ are bounded and $\sup_{t\in \dbT} \dbE[ |\xi_t|^2]<\infty$. Then
\bel{linear-representation}
 \cY_t =  \dbE_t\Big[ M^t_T \xi_t + \int_t^T \G(t, r) M^r_T \xi_r dr\Big],
 \ee
where $M$ is the solution to the following SDE:
\bel{linear-representation1}
 d M^t_r = M^t_r \beta(t, r)dW_r,~~(t,r)\in\dbT^2_+;\q M_t^t=I_m,
\ee
and
\bel{linear-representation2}
\G(t,s) := \sum_{n=1}^\infty K_n(t, s), ~~ K_1(t,s):= M^t_s \a(t,s), ~~ K_{n+1}(t, s) := \int_t^s K_1(t, r) K_n(r, s) dr.
\ee
\end{proposition}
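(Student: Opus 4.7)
The plan is to reduce the BSVIE to a parametrized family of linear BSDEs, derive a Volterra integral equation for $\cY$ via the classical exponential representation, and then iterate to recover the Neumann series defining the resolvent kernel $\G$.

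First, for each fixed $t\in\dbT$, I would introduce (exactly as in \rf{wtYZ}) the auxiliary BSDE
$$\wt\cY^t_s=\xi_t+\int_s^T[\a(t,r)\cY_r+\beta(t,r)\wt\cZ^t_r]dr-\int_s^T\wt\cZ^t_r\,dW_r,\q s\in[t,T],$$
obtained by freezing $\{\a(t,r)\cY_r\}_{r\in[t,T]}$ as an exogenous input, and use $\wt\cY^t_t=\cY_t$, $\wt\cZ^t_r=\cZ^t_r$. Applying It\^o's formula to $M^t_r\wt\cY^t_r$ with $M^t$ as in \rf{linear-representation1} absorbs the $\beta(t,r)\cZ^t_r$ term into a stochastic integral; evaluating at $s=t$ and taking conditional expectation yields the Volterra identity
$$\cY_t=\dbE_t\Big[M^t_T\xi_t+\int_t^T K_1(t,r)\cY_r\,dr\Big],\q K_1(t,r)=M^t_r\a(t,r).$$

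Next I would iterate this identity. Since $K_1(t,r)$ is $\cF_r$-measurable, substituting the same identity with $t$ replaced by $r$ into the integrand and applying the tower property gives, after one step,
$$\cY_t=\dbE_t\Big[M^t_T\xi_t+\int_t^T K_1(t,r)M^r_T\xi_r\,dr+\int_t^T\int_r^T K_1(t,r)K_1(r,s)\cY_s\,ds\,dr\Big];$$
Fubini converts the last double integral into $\int_t^T K_2(t,s)\cY_s\,ds$ with $K_2$ as in \rf{linear-representation2}, and induction produces, for every $N\ges1$,
$$\cY_t=\dbE_t\Big[M^t_T\xi_t+\sum_{n=1}^N\int_t^T K_n(t,r)M^r_T\xi_r\,dr+\int_t^T K_{N+1}(t,s)\cY_s\,ds\Big].$$
Passing $N\to\infty$ will deliver \rf{linear-representation}, by the uniqueness of solutions to the BSVIE (Proposition \ref{prop-BSVIE2}).

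The main obstacle is the limit $N\to\infty$, which requires both absolute convergence of $\sum K_n$ and the vanishing of the remainder $\dbE_t\int_t^T K_{N+1}(t,s)\cY_s\,ds$. Both reduce to uniform $L^p$ estimates on the iterated kernels $K_n$. Boundedness of $\beta$ gives $\dbE[|M^u_v|^p]\les C_p$ for every $p\ges1$, so H\"older's inequality on the simplex $\{t\les r_1\les\cds\les r_{n-1}\les s\}$ together with induction should yield a bound of the form $\|K_n(t,s)\|_{L^p}\les C^n(s-t)^{n-1}/(n-1)!$; the factorial denominator ensures absolute convergence, and combined with $\sup_t\dbE[|\cY_t|^2]<\i$ the remainder is controlled via Cauchy--Schwarz. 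The delicate point is that the chained product $M^t_{r_1}M^{r_1}_{r_2}\cds M^{r_{n-1}}_s$ does not collapse to a single exponential martingale because the drift coefficients $\beta(r_i,\cd)$ differ across levels, forcing a termwise H\"older application with exponents that grow with $n$; boundedness of $\beta$ keeps these $L^p$-norms finite at every order, and dominated convergence then justifies interchanging the expectation, the summation, and the $dr$-integral in the final step.
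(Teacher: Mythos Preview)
Your reduction to the Volterra identity $\cY_t=\dbE_t\big[M^t_T\xi_t+\int_t^T K_1(t,r)\cY_r\,dr\big]$ via the auxiliary BSDE and It\^o's formula is exactly what the paper does. From there the paper takes a slightly different but equivalent route: rather than iterating and controlling the remainder, it first establishes the convergence of $\G$, then verifies directly---using the resolvent identity $\G(t,s)=K_1(t,s)+\int_t^s K_1(t,r)\G(r,s)\,dr$---that the closed-form expression is a fixed point of the Volterra equation, and concludes by uniqueness of the BSVIE. Your Neumann iteration and the paper's verify-and-invoke-uniqueness are two standard sides of the same coin.

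One genuine warning about your $K_n$ estimates: the ``termwise H\"older with exponents that grow with $n$'' you propose will \emph{not} deliver a summable bound. Bounding the $n$-fold product forces each factor into $L^{2n}$, and for an exponential martingale with $|\beta|\les L$ one has $\|M^u_v\|_{L^{2n}}\les e^{c n}$; the resulting product contributes a factor $e^{c n^2}$ which outpaces the $1/(n-1)!$ coming from the simplex volume. The correct route---and this is what the paper does---is to use conditional second moments and the tower property: since $K_1(t,r)$ is $\cF_r$-measurable,
\[
\dbE_t\big[|K_{n+1}(t,s)|^2\big]\les (s-t)\int_t^s \dbE_t\Big[|K_1(t,r)|^2\,\dbE_r\big[|K_n(r,s)|^2\big]\Big]dr,
\]
and a straightforward induction at fixed $p=2$ yields $\dbE_t[|K_{n+1}(t,s)|^2]\les C_0^{n+1}(s-t)^{2n}/(2n-1)!!$. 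This keeps the moment order fixed, gives absolute convergence of $\G$, and kills your remainder term via Cauchy--Schwarz against $\sup_t\dbE[|\cY_t|^2]<\infty$.
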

\begin{proof} First, by Proposition \ref{prop-BSVIE2} we see that \rf{linearBSVIE} is wellposed.  Next, since $\a, \beta$ are bounded, it is clear that $\dbE_t[|K_1(t,s)|^2] \les C_0<\infty$.  Note that
\beaa
\dbE_t[|K_{n+1}(t, s)|^2] \les (s-t) \int_t^s \dbE_t\Big[|K_1(t, r)|^2 \dbE_r[ |K_n(r, s)|^2]\Big] dr.
\eeaa
Then by induction one can easily show that
\beaa
\dbE_t[|K_{n+1}(t, s)|^2] \les {C_0^{n+1}(s-t)^{2n}\over (2n-1)!!},\q\mbox{and thus}\q \dbE_t[|\G(t,s)|^2] \les C < \infty.
\eeaa
We now let $(\wt\cY,\wt\cZ)$ satisfy the following BSDE:
\bel{linearBSVIE1}
\wt\cY^t_s = \xi_t + \int_s^T \big[\a(t, r) \cY_r + \beta(t, r) \wt\cZ^{ t}_r \big] dr - \int_s^T  \wt\cZ^{ t}_r dW_r.
\ee
Then
$$
\cY_t= \wt\cY^t_t,\q \cZ^t_s=\wt\cZ^t_s.
$$
Apply It\^o formula to the mapping $s\mapsto M^t_s\wt\cY^t_s$ on $[t,T]$, we get
\bel{linearBSVIE2}
\cY_t =\wt\cY_t^t = \dbE_t \Big[ M^t_T \xi_t + \int_t^T M^t_s \a(t, s)\cY_s ds\Big].
\ee
Moreover, note that
$$\G(t,s) = K_1(t, s) + \int_t^s K_1(t, r) \G(r, s) dr.$$
Then
\beaa
\cY_t  &=&  \dbE_t \Big[ M^t_T \xi_t + \int_t^T K_1(t, s)\dbE_s\big[ M^s_T \xi_s + \int_s^T \G(s, r) M^r_T \xi_r dr\big] ds\Big]\\
  &=&  \dbE_t \Big[ M^t_T \xi_t + \int_t^T K_1(t, s) M^s_T \xi_s ds+ \int_t^TK_1(t, s) \int_s^T \G(s, r) M^r_T \xi_r dr ds\Big]\\
  &=&  \dbE_t \Big[ M^t_T \xi_t + \int_t^T K_1(t, s) M^s_T \xi_s ds+ \int_t^T \int_t^r K_1(t, s) \G(s, r) ds  M^r_T \xi_r dr \Big]\\
  &=&  \dbE_t \Big[ M^t_T \xi_t + \int_t^T \big[K_1(t, s)+ \int_t^s K_1(t, r) \G(r, s) dr\big] M^s_T \xi_s ds \Big]\\
  &=&  \dbE_t \Big[ M^t_T \xi_t + \int_t^T \G(t, s)  M^s_T \xi_s ds \Big] = \cY_t.
\eeaa
This implies that $(\cY, \cZ)$ satisfy \rf{linearBSVIE}. The result then follows from the uniqueness of \rf{linearBSVIE}.
\end{proof}

\begin{remark}
\label{rem-linear}\rm
The representation \rf{linear-representation} of $\cY$ is exactly the so-called {\it variation of constants formula} for linear BSVIEs.
A similar result was first obtained by Hu--{\O}ksendal \cite{Hu 2018} for the linear BSVIEs driven by a Brownian motion and a compensated Poisson random measure. However, in  \cite{Hu 2018} the coefficients $\a, \beta$  are assumed to be  deterministic functions and $\beta(t,r)\equiv \beta(r)$ is required to be independent of $t$. Thus our result is a generalized version of  \cite[Theorem 3.1]{Hu 2018}.
\end{remark}

\subsection{Representation of $\pa_\bx U$}
\label{sect-paxU}
In this subsection we assume Assumption \ref{assum-classical} holds true, and let $U$ be the classical solution to PPDE \rf{UPPDE}, corresponding to the decoupled FBSVIE \rf{FSVIE}--\rf{BSVIE}.  We shall use type-II BSVIE to provide an explicit representation formula for $\pa_\bx U(t,s,\bx)$, which is determined by \rf{paxU}--\rf{nablaX}.

We first apply Proposition \ref{variation-formula} to the middle equation of \rf{nablaX} with
\bel{ab}
\left.\ba{l}
\ds\xi_l:= \big\lan Dg(l ,X^{s,\bx}),\nabla_\eta X^{s,\bx}\big\ran
+\int_{l\vee s} ^T\big\lan Df\big(l ,r,X^{s,\bx},Y^{s,\bx}_r, Z^{l ,s,\bx}_r\big),\nabla_\eta X^{s,\bx}\big\ran dr;\\
\ds\a(l, r) := \pa_yf\big(l ,r,X^{s,\bx},Y^{s,\bx}_r, Z^{l ,s,\bx}_r\big),\q \beta(l, r) := \pa_zf\big(l ,r,X^{s,\bx},Y^{s,\bx}_r, Z^{l ,s,\bx}_r\big),\q l \vee s \les r;\\
\ns\ds \a(t, r) := 0, \q \beta(t, r) := 0,\q t\les r<s.
\ea\right.
\ee
Define $M^l_r$, $K_1(l, r)$,  and $\G(l, r)$ by \rf{linear-representation1}--\rf{linear-representation2}, then
\beaa
\nabla_\eta Y^{s,\bx}_l = \dbE_l\Big[M^l_T \xi_l + \int_l^T \G(l, r) M^r_T \xi_r dr\Big],\q l\in[s,T].
\eeaa
Note that $M^t_s = 1$ and $K_1(t, r)=0$ for $r\in [t, s]$, thanks to the third line of \rf{ab}. Then, by \rf{paxU} and the last equation of \rf{nablaX} we have
\beaa
\pa_\bx U(t,s,\bx) &=& \nabla_\eta\wt Y^{t,s,\bx}_s = \dbE\Big[M^t_T \xi_t + \int_s^T  M^t_l \a(t, l) \nabla_\eta Y^{s,\bx}_ldl\Big]\nonumber\\
&=&\dbE\Big[M^{t}_T \xi_t + \int_s^T  K_1(t, l)\big[M^l_T \xi_l + \int_l^T \G(l, r) M^r_T \xi_r dr\big]dl\Big]\nonumber\\
&=&\dbE\Big[M^{t}_T \xi_t + \int_s^T  \big[K_1(t,l) + \int_s^r K_1(t, l) \G(l, r) dl \big]M^r_T \xi_r dr\Big]\nonumber\\
&=&\dbE\Big[M^{t}_T \xi_t + \int_s^T  \G(t, r)M^r_T \xi_r dr\Big].
\eeaa
Plug the first line of \rf{ab} into this, we obtain
\bel{paxU2}
\left.\ba{c}
\ds\pa_\bx U(t,s,\bx) = \dbE\big[ \lan G^{s,\bx}(t),\nabla_\eta X^{s,\bx}\ran \big], \\
\ns\ds \mbox{where}\q G^{s,\bx}(t) := M^t_T D g(t, X^{s,\bx}) + \int_s^T \G(t, l) M^l_T D g(l, X^{s,\bx}) dl + \int_s^T H^{s,\bx}(t, r)dr;\\
\ns\ds H^{s,\bx}(t,r) := M^t_r Df\big(t ,r,X^{s,\bx},Y^{s,\bx}_r, Z^{t ,s,\bx}_r\big) + \int_s^r \G(t, l) M^l_r Df\big(l ,r,X^{s,\bx},Y^{s,\bx}_r, Z^{l ,s,\bx}_r\big) dl.
\ea\right.
\ee

Next, recall the first equation of \rf{nablaX}. We set
\bel{checkb}\begin{aligned}
&\check \f(t,s,\bx; t', s', dr') := D\f (t' ,s',X^{s,\bx}) (dr'),\q \hbox{for~} \f = b, \si;\\
& \check g (t, s, \bx; dt') := G^{s,\bx}(t) (dt').
\end{aligned}\ee
We now introduce the type-II BSVIE on $[s, T]$:
\bel{II-BSVIE-dual2}
\begin{aligned}
  &\cY_{t'} = \check g(t') - \int_{t'\les l'\les s'\les r'\les T}  \Big[ \check b(t,s,\bx; r',s', dl')  \wt \cY(dr', s')  \\
&\qq\q + \check \si(t,s,\bx; r',s', dl') \cZ(dr', s')\Big]ds'- \int_{t'}^T \cZ(t', s') dW_{s'};\\
 & \wt \cY(t', s') = \cY_{t'} - \int_{s'}^{t'} \cZ(t', r') dW_{r'},\q 0\les s'\les t'.
 \end{aligned}
 \ee
By Theorem \ref{thm-Fduality}, we obtain the following explicit representation formula for $\pa_\bx U(t,s,\bx)$.
\begin{theorem}\label{thm-repre-Ux} \sl
For any fixed $(t,s,\bx)\in\dbT^2_+\times \dbX$, let $\wt\cY$ be determined by \rf{II-BSVIE-dual2}.
Then  the path derivative of the solution $U$ to PPDE \rf{UPPDE} can be represented explicitly as follows:
\bel{paxU3}
\pa_\bx U(t,s,\bx) = \wt \cY(\cd, s).
\ee
\end{theorem}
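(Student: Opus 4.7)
The plan is to combine the representation \rf{paxU2} of $\pa_\bx U(t,s,\bx)$ with the linear FSVIE--type-II BSVIE duality of Theorem \ref{thm-Fduality}. First I would read the first equation of \rf{nablaX} as a specific instance of the abstract linear FSVIE \rf{measure-FSVIE}: by identifying the Fr\'echet derivatives $Db(l,r,X^{s,\bx})$ and $D\si(l,r,X^{s,\bx})$ with signed measures on $[0,r]$ via Riesz representation (and extending $\nabla_\eta X^{s,\bx}$ by zero on $[0,s)$), the process $\cX_r:=(\nabla_\eta X^{s,\bx})_r$ solves \rf{measure-FSVIE} on $[s,T]$ with initial datum $\eta$ and driving kernels $\check b,\check\si$ as in \rf{checkb}, modulo the trivial shift of the starting time from $0$ to $s$.

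Next I would apply Theorem \ref{thm-Fduality} with the terminal datum $\check g(dr):=G^{s,\bx}(t)(dr)$, where $G^{s,\bx}(t)$ is the signed measure constructed in \rf{paxU2} out of $Dg$, $Df$ and the resolvent $\G$ from Proposition \ref{variation-formula}. This yields the duality $\dbE\big[\int_s^T (\nabla_\eta X^{s,\bx})_r\,G^{s,\bx}(t)(dr)\big]=\int_s^T \eta_r\,\wt\cY(dr,s)$, where $\wt\cY$ is the first component of the $M$-solution to the type-II BSVIE \rf{II-BSVIE-dual2}. By definition of $G^{s,\bx}(t)$, the left-hand side is exactly $\dbE[\lan G^{s,\bx}(t),\nabla_\eta X^{s,\bx}\ran]$, which by \rf{paxU2} coincides with the Fr\'echet pairing $\lan \pa_\bx U(t,s,\bx),\eta\ran$. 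Since $\eta\in\dbX_s$ is arbitrary, the identification $\pa_\bx U(t,s,\bx)=\wt\cY(\cd,s)$ follows as an equality of signed measures on $[s,T]$, which is exactly \rf{paxU3}.

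The main obstacle I expect is the careful transport of Theorem \ref{thm-Fduality} from $[0,T]$ to the subinterval $[s,T]$ with $\cF_s$-measurable ``initial'' data $\eta$ and the shifted filtration $\dbF^s$; the Riemann-sum approximation argument in the proof of Theorem \ref{thm-Fduality} should go through verbatim upon conditioning on $\cF_s$, but this shift has to be written out cleanly. Two secondary points are: one must verify that the chosen $\check g=G^{s,\bx}(t)(\cd)$ satisfies the right-continuity-with-finite-variation and integrability hypotheses underlying Theorem \ref{thm-Fduality}, which follows from Assumption \ref{assum-classical} and the $L^p$ bounds on $(M,\G)$ in Proposition \ref{variation-formula}; and one must either take for granted, or separately verify, the well-posedness of the type-II BSVIE \rf{II-BSVIE-dual2}, the latter being exactly the kind of delicate issue (due to the $\cF_t$-measurability and finite variation requirements on $s\mapsto\wt\cY(s,t)$) explicitly flagged by the authors immediately after \rf{II-BSVIE-dual}.
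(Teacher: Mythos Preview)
Your proposal is correct and matches the paper's approach exactly: the paper derives \rf{paxU2} in the text preceding the theorem, then obtains \rf{paxU3} as an immediate application of Theorem~\ref{thm-Fduality} with the identifications \rf{checkb}, just as you describe. Your flagged technical points (the shift to $[s,T]$, the regularity of $\check g = G^{s,\bx}(t)$, and the well-posedness of \rf{II-BSVIE-dual2}) are precisely the ones the paper leaves implicit or explicitly defers.
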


%\section*{Acknowledgments}
%
%The authors  would like to thank the editor and the referee for their suggestive comments which lead to an improved version of our paper.
%

%    Insert the bibliography data here.

\end{document}